\numberwithin{equation}{section}
\numberwithin{figure}{section}
\newtheorem{theorem}{Theorem}[section]
\newtheorem{assumption}[theorem]{Assumption}
\newtheorem{corollary}[theorem]{Corollary}
\newtheorem{proposition}[theorem]{Proposition}
\newtheorem{lemma}[theorem]{Lemma}
\theoremstyle{definition}
\newtheorem{definition}[theorem]{Definition}
\newtheorem{remark}[theorem]{Remark}
\newcommand*{\N}{\ensuremath{\mathbb{N}}}
\newcommand*{\Z}{\ensuremath{\mathbb{Z}}}
\newcommand*{\R}{\ensuremath{\mathbb{R}}}
\renewcommand*{\tilde}{\widetilde}
\DeclareSymbolFont{boldoperators}{OT1}{cmr}{bx}{n}
\newcommand{\T}{\mathbb{T}}
\def\XXint#1#2#3{{\setbox0=\hbox{$#1{#2#3}{\int}$}
\vcenter{\hbox{$#2#3$}}\kern-.5\wd0}}
\renewcommand{\phi}{\varphi}
\newcommand{\indc}{\mathds{1}}
\renewcommand{\>}{\rangle}
\newcommand{\C}{\mathbb{C}}
\newcommand{\sol}{\mathcal{T}}
\newcommand{\E}{\mathbb{E}}
\renewcommand{\hat}{\widehat}
    \edef\sign{\pgfmathresult}%
    \edef\x{\pgfmathresult}%
    \edef\t{\pgfmathresult}%
    \edef\y{\pgfmathresult}%
\title[Obukhov--Corrsin spectrum of scalar turbulence]{The Obukhov--Corrsin spectrum of passive scalar turbulence through anomalous regularization}
\author{
Keefer Rowan
}\thanks{\'Ecole Polytechnique F\'ed\'erale de Lausanne.
{\footnotesize \href{mailto:keefer.rowan@epfl.ch}{keefer.rowan@epfl.ch}.}
}
\date{\today}
\begin{document}

\begin{abstract}
    The Obukhov--Corrsin spectrum predicts the distribution of Fourier mass for a passive scalar field advected by a ``turbulent'' velocity field with spatial regularity $C^\alpha_x$ for $\alpha \in (0,1)$ and subject to a time-stationary forcing. We prove the Obukhov--Corrsin spectrum holds after summing over geometric annuli in Fourier space---up to logarithmic corrections---as a consequence of a sharp anomalous regularization result. We then prove this anomalous regularization for a broad class of Kraichnan-type models. The proof of anomalous regularization relies on a Fourier space $\ell^p$ energy equality and a weighted lattice Poincar\'e inequality.
\end{abstract}

\maketitle
\section{Introduction}

Nearly a century ago, it became apparent that an exact description of a turbulent fluid was impossible and attention turned to understanding \textit{generic} and \textit{statistical} features of fluids~\cite{taylor_statistical_1935,frisch_turbulence_1995}. A pioneering and highly successful theory capturing the generic statistics of a turbulent fluid is Kolmogorov's K41 theory~\cite{kolmogorov_local_1941,kolmogorov_degeneration_1941,kolmogorov_dissipation_1941}. A central prediction of K41 theory is the equilibrium distribution of Fourier mass of a three-dimensional fluid velocity $u$ in statistical equilibrium under a steady forcing and vanishing diffusivity
\begin{equation}
\label{eq:kolmogorov 11 thirds}
\E_\mu |\hat u(k)|^2 \approx |k|^{-11/3},
\end{equation}
$k \in \Z^d$, $|k| \gg 1$, $\hat u(k)$ is the $k$th Fourier coefficient of $u : \T^d \to \R^d$, and $\mu$ is the ``vanishing-viscosity equilibrium measure''. 

Some aspects of the phenomenology of turbulent fluids have been given a rigorous mathematical foundation. Notably, the Onsager conjecture~\cite{onsager_statistical_1949} has seen an essentially complete resolution~\cite{constantin_onsagers_1994,buckmaster_anomalous_2015, isett_proof_2018,novack_intermittent_2023}. Additionally, the more robust Kolmogorov 4/5 law---also a central prediction of K41 theory---has been shown to necessarily arise in the equilibrium measure of fluid velocity given the physically well-motivated (but unproven) hypothesis of weak anomalous dissipation~\cite{bedrossian_sufficient_2019}. There are, however, numerous obstacles to giving rigorous proof to the prediction~\eqref{eq:kolmogorov 11 thirds}. There are physical obstacles:~\eqref{eq:kolmogorov 11 thirds} is, in fact, believed to be \textit{false}, due to the presence of \textit{intermittency corrections}---slightly increasing the exponent beyond $11/3$---further, there is not even a predicted universal value for this correction~\cite[Chapter 6]{frisch_turbulence_1995}. There are also profound mathematical obstacles: the three-dimensional viscous fluid equation is possibly not even globally well-posed and, beyond that, understanding the development of a turbulent cascade such as~\eqref{eq:kolmogorov 11 thirds} would require an extremely precise, uniform-in-viscosity control on the non-linear fluid equation which has essentially no precedent in the mathematical literature.

As such, we turn to a more tractable problem: \textit{passive scalar turbulence}. Passive scalars are scalar fields $\theta^\kappa_t : \T^d \to \R$ that are advected by a ``fluid-like'' velocity $v_t : \T^d \to \R^d$ but do not act on the velocity field. The scalar $\theta^\kappa_t : \T^d \to \R$ thus solves a linear advection-diffusion equation, and the velocity field is prescribed extrinsically. Ideally, we would take the velocity field $v_t$ to itself be a solution to a (stochastically forced) fluid equation, but that essentially returns us to the difficulty of the original problem. We therefore take \textit{synthetic velocity fields}, which are given explicit definition and for which we seek to retain various ``fluid-like'' properties. The simplest such property, which we always take in this work, is that the velocity field is incompressible: $\nabla \cdot v_t =0.$ Since we are trying to capture turbulent phenomena, we also take $v_t$ to have the regularity of a turbulent velocity field, which is (around) $C^{1/3}_x$; we in fact consider regularities $C^\alpha_x$ for $\alpha \in (0,1)$ as the phenomenology is similar in this parameter range.   

In this setting of passive scalar turbulence with spatially rough advecting flow, there is analogous phenomenological theory to K41, known as Obukhov--Corrsin theory~\cite{obukhov_structure_1949,corrsin_spectrum_1951}, which makes the prediction that for a $d$-dimensional passive scalar $\theta$ advected by a $C^\alpha_x$ ``turbulent'' velocity field and subject to a statistically steady forcing, we have in statistical equilibrium
\begin{equation}
\label{eq:obukhov corrsin}
\E_\mu |\hat \theta(k)|^2 \approx |k|^{-d- (1-\alpha)}.
\end{equation}
We note that---unlike in the fluid setting---we actually expect~\eqref{eq:obukhov corrsin} to hold; intermittency corrections are only expected to appear for moments greater than $2$~\cite{gawedzki_anomalous_1995,frisch_intermittency_1998,bernard_slow_1998,rosa_intermittency_2025}.

It is the goal of this work to show (a slightly weaker version of)~\eqref{eq:obukhov corrsin} holds for the invariant measure associated to any velocity field $v_t \in C^\alpha_x$ provided the properties of \textit{anomalous dissipation} and \textit{anomalous regularization} (up to the appropriate regularity index) hold. We then show that the desired anomalous dissipation and regularization hold for the stochastic version of the advection-diffusion equation given by taking the advecting flow to be a transport noise---known in the physics literature as the Kraichnan model~\cite{kraichnanSmallScaleStructure1968}. We thus give the first complete proof of a turbulent statistical scaling law like~\eqref{eq:kolmogorov 11 thirds} or~\eqref{eq:obukhov corrsin} in the ``rough'' regime---in which the fluid velocity has less than one derivative, corresponding to the limit of vanishing fluid viscosity. The ``smooth'' regime, also known as the Batchelor regime and corresponding to a fixed positive fluid viscosity, is by now well understood; see the discussion in Section~\ref{sss:batchelor} for an overview and connections with the problem under study.

\subsection{Results for two Kraichnan models}

We now specify two interesting velocity fields for which we have complete results leading to a version of~\eqref{eq:obukhov corrsin}. As will be clear in Section~\ref{ss:general}, we can prove some form of anomalous regularization for a broad variety of different transport noises, and we can show that anomalous dissipation and regularization imply a form of~\eqref{eq:obukhov corrsin} for a very general class of correlated-in-time velocity fields as well as transport noises. We provide these two specific cases first in order to give a clear statement of results free from the technical assumptions needed in the general setting.

In this section, we focus on the transport noise (or Kraichnan) setting; that is, we consider 
\begin{equation}
    \dot \theta_t^\kappa -\kappa \Delta \theta_t^\kappa + \circ du_t \cdot \nabla \theta_t^\kappa= F(x)dW_t,
    \label{eq:kraichnan-forced}
\end{equation}
where $\theta_t^\kappa : \T^d \to \R$, $F \in L^2(\T^d)$ with $\int F(x)\,dx =0$, $W_t \in \R$ a standard Brownian motion, and $du_t$ is a white-in-time, correlated-in-space, incompressible Gaussian vector field independent of $W_t$, defined by
\begin{equation}
    \label{eq:dut def}
    du_t(x) = \sum_{k \in \Z^d \backslash \{0\}} w_k e^{2\pi i k\cdot x}\sum_{j=1}^{d-1}\mathrm{e}_{k,j} dW_t^{k,j},
\end{equation}
where $\mathrm{e}_{k,j}$ is an orthonormal basis for $\{a \in \R^d : a \cdot k =0\}$, $w_k = w_{-k}$, $v_{-k,j} = v_{k,j}$, the $W^{k,j}$ are standard $\C$-valued Brownian motions, $W^{-k,j} = \overline{W^{k,j}}$, and $W^{k,j}$ independent from $W^{\ell,m}$ unless $k =  \pm \ell$ and $m=j$. Since $k \cdot \mathrm{e}_{k,j} =0$, we have that $\nabla \cdot du_t =0.$ The $\circ du_t$ in the above equations signifies that we are taking the Stratonovich convention for stochastic integration, which is the physically relevant choice in this setting (see Section~\ref{sss:kraichnan} for further discussion). We will also want to consider the freely-decaying version of~\eqref{eq:kraichnan-forced}:
\begin{equation}
    \label{eq:kraichnan-free}
\begin{cases}
    \dot \phi_t^\kappa -\kappa \Delta \phi_t^\kappa + \circ du_t \cdot \nabla \phi_t^\kappa=0,\\
    \phi_0^\kappa(x) = F(x).
\end{cases}
\end{equation}
Throughout the paper, we will always be in the case that for some $\alpha \in (0,1)$,
\begin{equation}
    \label{eq:dut-regularity}
    \sum_{k \in \Z^d \backslash \{0\}} |k|^{2\alpha} w_k^2 \leq 1.
\end{equation}
This assumption then gives that $du_t$ is (essentially) spatially $C^\alpha$. In this setting, since $\kappa>0$,~\eqref{eq:kraichnan-forced} and~\eqref{eq:kraichnan-free} are straightforwardly well defined (see, e.g., the discussion of~\cite[Section 2.2]{rowan_anomalous_2024}). 

We now specify the two choices of $w_k$ we will consider in this section. The first is (essentially) the usual choice of coefficients for the Kraichnan model on $\T^d$.

\begin{definition}[Isotropic Kraichnan model on $\T^d$]
\label{defn:Kraichnan standard}
       We let $d \geq 2$, fix $\alpha \in (0,1)$, and let $w_0 =0$. For $k \in \Z^d \backslash\{0\}$, we define 
    \[w_k := Z^{-1} |k|^{-d/2 - \alpha} (\log |k| +1)^{-1},\]
    where $Z$ is chosen so that~\eqref{eq:dut-regularity} holds with equality.
\end{definition}

We note that in the above definition we have an additional factor of $\log |k|$ compared to the more usual definitions~\cite{kraichnanSmallScaleStructure1968, falkovichParticlesFieldsFluid2001,galeati_anomalous_2024,rowan_anomalous_2024}. This is to make it so that~\eqref{eq:dut-regularity} actually holds (as opposed to being log divergent). This is essentially a choice made for purely technical convenience.  

We also define the following much sparser coefficient set.
\begin{definition}[Shear Kraichnan model on $\T^d$]
\label{defn:Kraichnan shear}
    We let $d \geq 2$, fix $\alpha \in (0,1)$, and let $w_0 =0$. For $k \in \Z^d \backslash\{0\}$, we define 
    \[w_k :=  \begin{cases}
        Z^{-1}|k|^{-1/2 - \alpha} (\log |k| +1)^{-1} & k_3 = k_4 =\cdots =k_d = 0,\\ 
        0 & \text{otherwise},
    \end{cases}\]
    where $Z$ is chosen so that~\eqref{eq:dut-regularity} holds with equality.
\end{definition}

We note that the velocity field given by Definition~\ref{defn:Kraichnan shear} is very anisotropic. It is a sum of a pure $x$ shear and a pure $y$ shear, so constant in every direction except $x$ and $y$, thus having a fairly degenerate structure. We emphasize that the velocity field $du_t$ with coefficients given by Definition~\ref{defn:Kraichnan shear} still \textit{points in all directions}---its range is $\R^d$---it just has a very restricted Fourier support: the $k_x$ and $k_y$ axes.

With the above specifications of the coefficients $w_k$, we are ready to state our first main result, giving anomalous dissipation and anomalous regularization for passive scalars advected by the ``Kraichnan models'' specified above.

\begin{theorem}[Anomalous dissipation and regularization]
\label{thm:anomalous dissipation and regularization special}
    Let $d\geq 2$, $\alpha \in (0,1)$, and let $du_t$ have its coefficients defined by either Definition~\ref{defn:Kraichnan standard} or Definition~\ref{defn:Kraichnan shear}. Then there exists $C(d,\alpha)>0$ such that for all $F \in L^2(\T^d)$ with $\int F(x)\,dx = 0$ and all $\kappa>0$, if we let $\phi^\kappa_t$ be the solution to~\eqref{eq:kraichnan-free}, then we have the following estimates:
    \begin{align}
        \text{for all } t \geq 0, \quad \E \|\phi^\kappa_t\|_{L^2(\T^d)}^2 &\leq C e^{-C^{-1}t} \|\phi^\kappa_0\|_{L^2(\T^d)}^2,
        \label{eq:anomalous dissipation special}\\
        \E \int_0^1 \sum_{k \in \Z^d \backslash \{0\}}  \frac{|k|^{2(1-\alpha)}}{(\log |k|+ 1)^{4}} |\hat \phi^\kappa_t(k)|^2\,dt  &\leq  C \|\phi^\kappa_0\|_{L^2(\T^d)}^2,
        \label{eq:anomalous regularization special}
    \end{align}
    where $\hat \phi^\kappa_t(k)$ is the $k$th Fourier coefficient of $\phi^\kappa_t$.
\end{theorem}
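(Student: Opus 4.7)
The plan is to study the deterministic evolution of the expected Fourier power spectrum $q_k(t) := \E|\hat\phi^\kappa_t(k)|^2$. Converting~\eqref{eq:kraichnan-free} to It\^o form introduces an eddy-diffusion drift $\tfrac{1}{2}Q_{ij}(0)\partial_i\partial_j\phi$ from the spatially-constant covariance $Q_{ij}(0)=\sum_m w_m^2\sum_j\mathrm{e}_{m,j}^i\mathrm{e}_{m,j}^j$ of $du_t$. Applying It\^o's formula to $|\hat\phi_t(k)|^2$ and noting that spatial homogeneity-in-law of $du_t$ forces $\E[\hat\phi(k)\overline{\hat\phi(\ell)}]=0$ for $k\neq\ell$, the diagonal system closes and one derives the \emph{master equation}
\begin{equation*}
\partial_t q_k \;=\; -2\kappa(2\pi|k|)^2 q_k \;+\; \sum_{m\neq 0} K(k,k-m)\bigl(q_{k-m}-q_k\bigr),
\end{equation*}
with transfer kernel $K(k,k-m):=(2\pi)^2 w_m^2\sum_j(\mathrm{e}_{m,j}\cdot k)^2$. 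Since $\mathrm{e}_{m,j}\cdot m=0$, we have $\mathrm{e}_{m,j}\cdot k=\mathrm{e}_{m,j}\cdot(k-m)$, so $K$ is symmetric: the transfer is a reversible Markov generator on $\Z^d$ with respect to counting measure.

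Next, for any convex $\Phi$ I would derive the \emph{$\ell^p$ energy equality} by testing the master equation against $\Phi'(q_k)$ and exploiting the symmetry of $K$:
\begin{equation*}
\frac{d}{dt}\sum_k \Phi(q_k) \;=\; -2\kappa\sum_k(2\pi|k|)^2 q_k\Phi'(q_k)\;-\;\tfrac{1}{2}\sum_{k,m\neq 0}K(k,k-m)\bigl(\Phi'(q_k)-\Phi'(q_{k-m})\bigr)(q_k-q_{k-m}),
\end{equation*}
the final Dirichlet-form term being nonnegative. Taking $\Phi(x)=x^p$ for a suitable $p\in[1,2]$ and using the convexity inequality $(a^{p-1}-b^{p-1})(a-b)\gtrsim_p(a^{p/2}-b^{p/2})^2$, integrating on $[0,1]$, and bounding $\sum q_k(0)^p\leq\|\phi_0\|_{L^2}^{2p}$ (via the embedding $\ell^2\hookrightarrow\ell^{2p}$ for $p\geq 1$) produces
\begin{equation*}
\int_0^1\sum_{k,m\neq 0}K(k,k-m)\bigl(q_k^{p/2}-q_{k-m}^{p/2}\bigr)^2\,dt\;\leq\; C_p\,\|\phi_0\|_{L^2}^{2p}.
\end{equation*}

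The second ingredient I would establish is the \emph{weighted lattice Poincar\'e inequality}: for every $f:\Z^d\to\R$ with $f_0=0$,
\begin{equation*}
\sum_{k\neq 0}\frac{|k|^{2(1-\alpha)}}{(\log|k|+1)^4}|f_k|^2 \;\leq\; C\sum_{k,m\neq 0} w_m^2\sum_j (\mathrm{e}_{m,j}\cdot k)^2(f_k-f_{k-m})^2.
\end{equation*}
Applying this with $f_k=q_k^{p/2}$ (which vanishes at $k=0$ since $\int F=0$ forces $q_0(t)\equiv 0$) and combining with the dissipation bound above controls $\int_0^1\sum_k\omega(k)q_k^p\,dt$; a H\"older-type interpolation against the conserved $\ell^1$-energy $\sum_k q_k(t)\leq\|\phi_0\|_{L^2}^2$ then removes the power, yielding~\eqref{eq:anomalous regularization special}. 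To obtain~\eqref{eq:anomalous dissipation special} I would use $\omega(k)\geq c_0>0$ for $k\neq 0$ to convert~\eqref{eq:anomalous regularization special} into $\int_0^1\E\|\phi_t^\kappa\|_{L^2}^2\,dt\leq C\|\phi_0\|_{L^2}^2$; combined with pathwise monotonicity of $t\mapsto\|\phi^\kappa_t\|_{L^2}^2$ (from the Stratonovich structure of~\eqref{eq:kraichnan-free}) and a Markov restart at integer times, this gives a uniform-in-$\kappa$ contraction factor on each unit interval, iterating to the stated exponential decay.

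The principal obstacle I anticipate is the weighted Poincar\'e inequality in the shear setting (Definition~\ref{defn:Kraichnan shear}), where $w_m$ is supported on two coordinate axes so $K(k,\cdot)$ only allows single-step jumps of $k$ along those axes, despite $du_t$ still having range $\R^d$. Controlling a weight that grows in every lattice direction will require composing jumps along different axes via an iterated-commutator / multi-step argument, and carefully tracking the loss from this composition is precisely what produces the $(\log|k|+1)^{-4}$ correction. In the isotropic case (Definition~\ref{defn:Kraichnan standard}) the inequality should reduce, after the Taylor expansion $(f_k-f_{k-m})^2\sim|m|^2|\nabla_{\mathrm{disc}}f|^2$ for $|m|\lesssim|k|$, to a standard discrete fractional Hardy-type estimate associated with the $C^\alpha$ structure function of $du_t$.
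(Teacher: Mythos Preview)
Your overall strategy matches the paper's: derive the master equation for $a_k=\E|\hat\phi^\kappa_t(k)|^2$, establish an $\ell^p$ energy identity via the symmetry of the transfer kernel, feed this into a weighted lattice Poincar\'e inequality, and then interpolate the resulting $\ell^p$ bounds down to $\ell^1$. Your exponential-decay argument (regularization $\Rightarrow$ integrated $L^2$ decay $\Rightarrow$ pointwise decay via monotonicity and restart) is a valid alternative to the paper's route, which instead gets Gr\"onwall-type $\ell^p$ decay directly from the Poincar\'e inequality and then interpolates to $\ell^1$.

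There are two places where your sketch diverges from the paper in ways worth flagging. First, your formulation of the Poincar\'e inequality in terms of $(q_k^{p/2}-q_{k-m}^{p/2})^2$ via the Stroock--Varopoulos-type bound $(a^{p-1}-b^{p-1})(a-b)\gtrsim_p(a^{p/2}-b^{p/2})^2$ is equivalent to the paper's, but the paper instead proves an $\ell^1$-type inequality (bounding $\sum_k S(R|k|)^2 a_k$ by a sum of first differences $|a_{k+j}-a_k|$), applies it to $a_k^p$, and uses $|a^p-b^p|\leq \frac{p}{\sqrt{p-1}}\sqrt{a^p+b^p}\sqrt{(a^{p-1}-b^{p-1})(a-b)}$ with Cauchy--Schwarz. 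Either route produces the crucial $\frac{C}{p-1}$ constant.

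Second, and more substantively, your anticipated proof of the Poincar\'e inequality is not what the paper does, and your diagnosis of where the logarithms arise is off. The paper gives a \emph{single unified} argument covering both the isotropic and shear models: decompose $\Z^d\setminus\{0\}$ into geometric annuli $A_n$, and for $k\in A_n$ and each admissible $j$ telescope $a_k$ along the line $k+\langle j\rangle$ out to the next annulus $A_{n+1}$; summing over $j$ with weights $|j|^{1+\alpha}w_j^2$ and invoking the nondegeneracy condition $\inf_{|v|=1}\sum_{|j|\leq r}|j|^{1+\alpha}w_j^2|\Pi_{j^\perp}v|\geq\delta S(r)$ yields the weight $S(R|k|)^2$ on the left. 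No iterated-commutator or multi-axis composition is needed for the shear case---a single-direction telescoping already reaches the outer annulus, and the nondegeneracy condition (which holds because the two shear axes together span all perpendicular directions) does the rest. The logarithmic losses do not come from the Poincar\'e proof at all: one factor of $(\log|k|)^{-2}$ is baked into the coefficients $w_k$ (to make $\sum|k|^{2\alpha}w_k^2<\infty$), entering through $S(r)\approx r^{1-\alpha}(\log r)^{-2}$, and the remaining factor arises in the $\ell^p\to\ell^1$ step, which the paper executes by H\"older on \emph{doubly-exponential} annuli $\{2^{2^n-1}\leq|k|<2^{2^{n+1}-1}\}$ with $p_n=1+2^{-n}$---a choice tuned so that the volume factor $|A_n|^{(p_n-1)/p_n}$ stays bounded while $(p_n-1)^{-1}=2^n$ is beaten by the $(\log|k|)^{-2}\approx 2^{-2n}$ weight. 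Your vague ``H\"older-type interpolation against the conserved $\ell^1$-energy'' would need to be this specific to close.
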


We emphasize that the constants in the above estimates are \textit{uniform in $\kappa>0$}. As such,~\eqref{eq:anomalous dissipation special} is a statement of anomalous dissipation:~\eqref{eq:kraichnan-free} is dissipating $L^2$ energy uniformly in $\kappa>0$ despite the $\kappa =0$ equation being formally $L^2$ conserving (see Section~\ref{sss:anomalous dissipation} for further discussion of this phenomenon).~\eqref{eq:anomalous regularization special} is a statement of anomalous regularization:~\eqref{eq:kraichnan-free} is gaining regularity (since $1-\alpha>0$) on its initial data uniformly in $\kappa>0$ despite the $\kappa=0$ equation being regularity preserving---at least for smooth velocity fields (see Section~\ref{sss:anomalous regularization} for further discussion of this phenomenon).

\begin{remark}
    In~\eqref{eq:anomalous regularization special}, we see we are getting (essentially) $H^{1-\alpha}_x$ regularity on $\phi^\kappa_t$ (uniformly in $\kappa>0$). This is however in conflict with~\eqref{eq:obukhov corrsin}, which predicts (essentially) $H^{\frac{1-\alpha}{2}}_x$ regularity in equilibrium. This discrepancy is due to~\eqref{eq:obukhov corrsin} only being the correct scaling prediction when the advecting velocity field is time-regular (say at least $C^0_{t,x}$). In the case we are considering, $du_t$ is white in time, so (essentially) $C^{-1/2}_t C^\alpha_x$. This roughness in time~\cite{gawedzki_anomalous_1995,falkovichParticlesFieldsFluid2001,galeati_anomalous_2024} changes the scaling of~\eqref{eq:obukhov corrsin} to 
    \begin{equation}
        \label{eq:obukhov corrsin kraichnan}
        \E |\hat \theta(k)|^2 \approx |k|^{-d - 2(1-\alpha)}.
    \end{equation}
    That is we expect (essentially) $H^{1-\alpha}_x$ regularity in equilibrium, in exact correspondence with~\eqref{eq:anomalous regularization special} (up to logarithmic corrections).
\end{remark}

\begin{remark}
    Our anomalous regularization statement~\eqref{eq:anomalous regularization special} is integrated in time. One might wonder if a pointwise-in-time anomalous regularization statement also holds. Such a pointwise-in-time statement is proven in~\cite[Theorem 1.3]{drivas_anomalous_2025}. Following the same argument of~\cite[Proposition 4.11]{drivas_anomalous_2025}, we too could upgrade~\eqref{eq:anomalous regularization special} to a pointwise-in-time statement but at the cost of strictly decreasing the regularity exponent to $1-\alpha - \delta$ for any $\delta>0$. It seems likely that one could get a pointwise-in-time version of~\eqref{eq:anomalous regularization special} at the endpoint regularity of $1-\alpha$ with additional log corrections by keeping careful quantitative track of the constants appearing in the proof of~\cite[Proposition 4.11]{drivas_anomalous_2025}. This would require  estimates on the constants appearing in (infinitely many) interpolation theorems, which likely would require reproving these theorems. Since the pointwise-in-time estimate on the free-decay problem doesn't gain us any better estimates on the invariant measure, which is the primary object of interest in this work, we leave such considerations to future studies.
\end{remark}

These anomalous dissipation and regularization estimates then give rise to the following version of the statistical scaling law~\eqref{eq:obukhov corrsin kraichnan}. 

\begin{theorem}[Obukhov--Corrsin spectrum on annuli]
\label{thm:OC special}
    Let $d\geq 2$, $\alpha \in (0,1)$, and let $du_t$ have its coefficients defined by either Definition~\ref{defn:Kraichnan standard} or Definition~\ref{defn:Kraichnan shear}. Then there exists $C(d,\alpha)>0$ such that for all $F \in L^2(\T^d)$ with $\int F(x)\,dx = 0$ and all $\kappa>0$, we have the following results. The Markov process $\theta^\kappa_t$ given by~\eqref{eq:kraichnan-forced} has a unique invariant probability measure $\mu^\kappa$ on the zero-mean subspace of $L^2(\T^d)$, and $\mu^\kappa$ has the global moment regularity upper bound
    \begin{equation}
    \label{eq:regularity for invariant measure special}
    \E_{\mu^\kappa} \sum_{k \in \Z^d \backslash\{0\}} \frac{|k|^{2(1-\alpha)}}{(\log |k| +1)^4} |\hat \theta^\kappa|^2 \leq C \|F\|_{L^2(\T^d)}^2,
    \end{equation}
    where $\hat \theta^\kappa(k)$ is the $k$th Fourier coefficient of $\theta^\kappa \in L^2(\T^d)$. We also have the following upper and lower bounds on the Fourier mass on annuli: there exists $r_0(F)>0$ such that for all $r \in (\kappa^{\frac{1}{2\alpha}}, r_0)$, 
     \begin{align*}C^{-1} r^{2(1-\alpha)} \|F\|_{L^2}^2 &\leq \sum_{ C^{-1} r^{-1}(\log r^{-1})^{-\frac{2}{\alpha}}  \leq |k| \leq C r^{-1} (\log r^{-1})^{\frac{2}{1-\alpha}}} \E_{\mu^\kappa} |\hat \theta^\kappa(k)|^2 
     \\&\qquad\qquad\qquad\qquad\qquad\qquad\qquad\leq  C (\log r^{-1})^{\frac{4}{\alpha}} r^{2(1-\alpha)} \|F\|_{L^2}^2.
     \end{align*}
\end{theorem}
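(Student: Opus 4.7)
The plan is to lift Theorem~\ref{thm:anomalous dissipation and regularization special} (for the free-decay problem) to the forced problem~\eqref{eq:kraichnan-forced} via the stochastic Duhamel formula, and then run an It\^o flux balance at stationarity to extract the lower bound on the annular spectrum.

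\textbf{Invariant measure, uniqueness, and the regularity bound.} By linearity of~\eqref{eq:kraichnan-forced} and independence of $du$ and $W$, starting from $\theta^\kappa_0=0$ the Duhamel formula reads
\[
\theta^\kappa_t \;=\; \int_0^t U^{(s,t)}F\,dW_s,
\]
where $U^{(s,t)}$ is the random free-decay solution operator on $[s,t]$, measurable with respect to $\{du_r: s\le r\le t\}$ and thus independent of $W$. It\^o's isometry and the stationarity in $s$ of the law of $du_\cdot$ give, mode by mode,
\[
\E|\hat\theta^\kappa_t(k)|^2 \;=\; \int_0^t \E|\hat{\phi^\kappa_u}(k)|^2\,du,
\]
where $\phi^\kappa_u$ is the free-decay solution from $F$ as in~\eqref{eq:kraichnan-free}. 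Combined with~\eqref{eq:anomalous dissipation special} this yields uniform-in-$t$ bounds on $\E\|\theta^\kappa_t\|^2_{L^2}$; together with~\eqref{eq:anomalous regularization special} one obtains tightness of the laws on $L^2(\T^d)$, so Krylov--Bogoliubov produces an invariant measure $\mu^\kappa$. Uniqueness follows from~\eqref{eq:anomalous dissipation special} applied to the difference of two stationary solutions (which by linearity solves~\eqref{eq:kraichnan-free}). Passing to the limit $t\to\infty$ gives $\E_{\mu^\kappa}|\hat\theta(k)|^2 = \int_0^\infty \E|\hat{\phi^\kappa_u}(k)|^2\,du$; multiplying by $|k|^{2(1-\alpha)}(\log|k|+1)^{-4}$, summing, splitting the time integral into unit intervals $[n,n+1]$, and invoking the Markov property to apply~\eqref{eq:anomalous regularization special} on each interval with initial datum $\phi^\kappa_n$ of squared $L^2$ norm $\le Ce^{-n/C}\|F\|^2_{L^2}$ (by~\eqref{eq:anomalous dissipation special}), the resulting geometric series yields~\eqref{eq:regularity for invariant measure special}.

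\textbf{Upper and lower bounds on the annulus.} The upper bound is immediate from~\eqref{eq:regularity for invariant measure special}: on $\{|k|\ge C^{-1}r^{-1}(\log r^{-1})^{-2/\alpha}\}$ the weight is $\gtrsim r^{-2(1-\alpha)}(\log r^{-1})^{-4/\alpha}$ (for $r$ below a threshold depending on $d,\alpha$), so the Fourier mass on that outer half-space is $\le C(\log r^{-1})^{4/\alpha}r^{2(1-\alpha)}\|F\|^2_{L^2}$. For the lower bound, choose $r_0(F)>0$ so that $F$ is concentrated on $|k|\ll r_0^{-1}$, take a smooth radial cutoff $\chi$ with $\chi\equiv 1$ on $[0,1]$ and supported in $[0,2]$, set $\rho\sim r^{-1}$, and apply It\^o's formula in stationarity to $\sum_k\chi(|k|/\rho)|\hat\theta(k)|^2$:
\[
\Pi(\rho) + 2\kappa\sum_k\chi(|k|/\rho)|k|^2\E_{\mu^\kappa}|\hat\theta(k)|^2 \;=\; \sum_k\chi(|k|/\rho)|\hat F(k)|^2,
\]
where $\Pi(\rho)$ is the flux through scale $\rho$, a concrete quadratic form in $\theta$ built from $\{w_k^2\}$. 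For $r\in(\kappa^{1/(2\alpha)},r_0)$ the right-hand side equals $\|F\|^2_{L^2}$, and the molecular dissipation is $\ll\|F\|^2_{L^2}$ by interpolating~\eqref{eq:regularity for invariant measure special} with the $L^2$ upper bound on $\E_{\mu^\kappa}\|\theta\|^2$, forcing $\Pi(\rho)\gtrsim\|F\|^2_{L^2}$. On the other hand, Cauchy--Schwarz on the flux bilinear form together with~\eqref{eq:dut-regularity} gives an upper bound $\Pi(\rho)\lesssim(\log\rho)^C\rho^{2(1-\alpha)}\sum_{k\in A_\rho}\E_{\mu^\kappa}|\hat\theta(k)|^2$, with $A_\rho$ an annulus at scale $\rho$; combining the two inequalities produces the claimed lower bound.

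\textbf{Main obstacle.} The hardest step is the flux estimate in the lower bound: one must identify $A_\rho$ with exactly the stated log-adjusted radii $[C^{-1}\rho(\log\rho)^{-2/\alpha},\, C\rho(\log\rho)^{2/(1-\alpha)}]$ and absorb all remaining log factors into $(\log r^{-1})^{4/\alpha}$. This is especially delicate for the shear model of Definition~\ref{defn:Kraichnan shear}, where $du$ is supported on the two-dimensional Fourier sublattice $\{k_3=\cdots=k_d=0\}$ so the bilinear form defining $\Pi(\rho)$ is strongly anisotropic and a naive isotropic Cauchy--Schwarz cannot capture the correct annular localization; the weighted lattice Poincar\'e inequality and the Fourier-space $\ell^p$ energy equality advertised in the abstract are the natural tools to handle this degenerate case.
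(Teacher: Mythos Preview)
Your treatment of the invariant measure, its uniqueness, and the regularity bound~\eqref{eq:regularity for invariant measure special} is correct and essentially the paper's Proposition~\ref{prop:invariant measures} and Proposition~\ref{prop:OC upper bounds}. The annular upper bound is also fine. The trouble is entirely in the lower bound, where your flux-balance argument has two concrete gaps.

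First, the claim that the molecular dissipation term $2\kappa\sum_k\chi(|k|/\rho)|k|^2\E_{\mu^\kappa}|\hat\theta(k)|^2$ is $\ll\|F\|_{L^2}^2$ fails. The only regularity you have is~\eqref{eq:regularity for invariant measure special}, which controls an $H^{1-\alpha}$-type norm; interpolating with $L^2$ cannot reach $H^1$. The best you get is $\kappa\sum_{|k|\le 2\rho}|k|^2 a_k\le C\kappa\rho^{2\alpha}(\log\rho)^4\|F\|_{L^2}^2$, and with $\rho\sim r^{-1}$ and merely $\kappa<r^{2\alpha}$ this is $C(\log\rho)^4\|F\|_{L^2}^2$, not small. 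Second, the flux upper bound $\Pi(\rho)\lesssim(\log\rho)^C\rho^{2(1-\alpha)}\sum_{k\in A_\rho}a_k$ does not follow from Cauchy--Schwarz and~\eqref{eq:dut-regularity}: the flux $\sum_{k,j}(\chi(|k|/\rho)-\chi(|k+j|/\rho))w_j^2|\Pi_{j^\perp}k|^2 a_k$ receives contributions from \emph{all} $k$, and the low-mode contribution (with $|k|\ll\rho$, $|j|\sim\rho$) is again of order $(\log\rho)^4\|F\|_{L^2}^2$ after invoking the regularity bound. You also misidentify the relevant tool: the weighted lattice Poincar\'e inequality and the $\ell^p$ energy equality are used in the paper to \emph{prove} anomalous regularization (Theorem~\ref{thm:anomalous dissipation and regularization special}), not to localize the flux; here regularization is already an input.

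The paper fixes both issues at once and without distinguishing the isotropic and shear cases. It works with the two-point function $g^\kappa_t(x)=\int\E\phi^\kappa_t(y)\phi^\kappa_t(y+x)\,dy$, which solves a degenerate parabolic equation with diffusion matrix $2\kappa I+(D(0)-D(x))$; the Constantin--E--Titi commutator estimate on $\eta_r*g^\kappa_t(0)$ together with the single real-space bound $|D(0)-D(x)|\le C|x|^{2\alpha}$ (valid for any coefficients satisfying~\eqref{eq:dut-regularity}) yields
\[
-\tfrac{d}{dt}\,\eta_r*g^\kappa_t(0)\le C(\kappa+r^{2\alpha})r^{-2}\sum_k\min(|k|^2r^2,1)\,a_k.
\]
The diffusion is thus absorbed into the \emph{same} weighted sum as the advective flux rather than controlled separately. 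After integrating in $t$ and using~\eqref{eq:equilibrium equals free decay}, the annular localization is obtained by splitting $\sum_k\min(|k|^2r^2,1)a_k$ into $|k|<a$, $a\le|k|\le b$, $|k|>b$ and using the already-established regularity bound~\eqref{eq:regularity for invariant measure special} to make the low and high pieces $\le\tfrac14\|F\|_{L^2}^2$; this forces $a\sim r^{-1}(\log r^{-1})^{-2/\alpha}$ and $b\sim r^{-1}(\log r^{-1})^{2/(1-\alpha)}$, producing exactly the stated annulus.
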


We note that the final display of Theorem~\ref{thm:OC special} is precisely the predicted statistical scaling law~\eqref{eq:obukhov corrsin kraichnan} summed over a geometrically sized annulus on the Fourier lattice, $\{C^{-1} r^{-1} \leq |k| \leq C r^{-1}\}$, at the length scale $r$ (so wavenumber magnitude $r^{-1}$)---up to logarithmic corrections. There are multiple logarithmic corrections. First, the annulus is not quite of geometric size: it instead grows very slightly faster than geometrically. Then since the annulus is ``too large'', we expect a bit more mass on the annulus by summing~\eqref{eq:obukhov corrsin kraichnan}, so the lower bound is very slightly too small (by logarithmic factors). Finally the upper and lower bounds are mismatched by a logarithmic factor. These logarithmic corrections however are essentially physically unobservable; it is not even clear they should be able to be completely removed. Nonetheless, we do not claim that the logarithmic factors are optimal, and in particular we would expect a matching upper and lower bound to hold in at least some cases. There is also the (natural) restriction that $r > \kappa^{1/2\alpha}$, which is further discussed in Remark~\ref{rmk:dissipation range} below.

The fact that we have to sum the original pointwise prediction~\eqref{eq:obukhov corrsin kraichnan} over annuli in order to get the lower bound (a pointwise upper bound clearly holds as a consequence of~\eqref{eq:regularity for invariant measure special}) is not too surprising; a similar summing is needed to get bounds on the Batchelor spectrum (see Section~\ref{sss:batchelor} for further discussion and comparison). While a pointwise lower bound may hold, its proof would likely require much finer control on the Fourier evolution of~\eqref{eq:kraichnan-free}.

The final result we record before stating our results in their general formulation is the following infinite-order smoothing estimate in the case of the velocity field $du_t$ defined by Definition~\ref{defn:Kraichnan shear}. In that case, we note for $d \geq 3$ and any $3 \leq j \leq d$, we have that $\partial_j du_t =0$. Thus, fixing $3 \leq j \leq d$ and letting $|\partial_j| := (- \partial_j \partial_j)^{1/2}$, for any $s \in [0,\infty)$, if $\phi^\kappa_t$ solves~\eqref{eq:kraichnan-free}, so does $|\partial_j|^s \phi^\kappa_t$ (with different initial data). This allows us to iterate the smoothing estimate~\eqref{eq:anomalous regularization special} to prove the following.

\begin{corollary}[Infinite order smoothing in constant directions]
    \label{cor:infinite smoothing in the z direction}
        Let $d\geq 3$, $\alpha \in (0,1)$, and let $du_t$ have its coefficients defined by Definition~\ref{defn:Kraichnan shear}. Then for all $n \in \N$ there exists $C(d,n,\alpha)>0$ such that for all $F \in L^2(\T^d)$ with $\int F(x)\,dx = 0$ and all $\kappa>0$, if we let $\phi^\kappa_t$ be the solution to~\eqref{eq:kraichnan-free}, then for all $3 \leq j \leq d$, we have the following estimate
    \begin{equation}
        \label{eq:infinite order smoothing}
        \E \|\partial_j^n \phi^\kappa_1\|_{L^2(\T^d)}^2  \leq  C \|\phi^\kappa_0\|_{L^2(\T^d)}^2.
    \end{equation}
\end{corollary}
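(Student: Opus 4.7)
The key observation is that for coefficients as in Definition~\ref{defn:Kraichnan shear}, the velocity field $du_t$ depends only on $(x_1,x_2)$, because every Fourier mode $e^{2\pi i k\cdot x}$ in~\eqref{eq:dut def} has $k_3=\cdots=k_d=0$. Consequently, for any $3\le j\le d$ and any $s\ge 0$, the Fourier multiplier $|\partial_j|^s$ commutes with $\kappa\Delta$ and with $du_t\cdot\nabla$; hence if $\phi^\kappa_t$ solves~\eqref{eq:kraichnan-free}, then so does $|\partial_j|^s\phi^\kappa_t$, with initial data $|\partial_j|^s F$. Moreover, It\^o's formula in Stratonovich form together with $\nabla\cdot du_t=0$ yields the pathwise identity $d\|\phi^\kappa_t\|_{L^2}^2=-2\kappa\|\nabla\phi^\kappa_t\|_{L^2}^2\,dt$, so $\|\phi^\kappa_t\|_{L^2}$ is non-increasing along every sample path for every solution to~\eqref{eq:kraichnan-free}.

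The plan is to iterate~\eqref{eq:anomalous regularization special} together with these two facts. Fix $\delta:=(1-\alpha)/2$ and note the elementary bound
\[
|k_j|^{2(1-\alpha-\delta)}\le |k|^{2(1-\alpha-\delta)}\le C(\alpha)\,\frac{|k|^{2(1-\alpha)}}{(\log|k|+1)^4}\qquad\text{for all } k\in\Z^d\setminus\{0\},
\]
which absorbs the logarithmic loss in~\eqref{eq:anomalous regularization special} at the cost of $\delta$ derivatives. By time-homogeneity of the Kraichnan model, the bound~\eqref{eq:anomalous regularization special} holds with $[0,1]$ replaced by $[\tau,\tau+T]$ for any $\tau\ge 0$ and $T\in(0,1]$, and with $\|\phi^\kappa_0\|_{L^2}^2$ replaced by $\E\|\phi^\kappa_\tau\|_{L^2}^2$. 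Applied to $|\partial_j|^a\phi^\kappa$ for any $a\ge 0$ and combined with Chebyshev's inequality, this produces a time $s\in(\tau,\tau+T)$ with
\[
\E\||\partial_j|^{a+1-\alpha-\delta}\phi^\kappa_s\|_{L^2}^2\le \frac{C(d,\alpha)}{T}\,\E\||\partial_j|^a\phi^\kappa_\tau\|_{L^2}^2.
\]

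Set $m:=\lceil 2n/(1-\alpha)\rceil$ so that $m(1-\alpha-\delta)\ge n$, and iterate the above gain $m$ times on successive intervals of length $T=1/(2m)$, starting from $a_0=0$, $\tau_0=0$. This produces a time $\tau_m\le 1/2$ and a constant $C(d,n,\alpha)$ with
\[
\E\||\partial_j|^{m(1-\alpha-\delta)}\phi^\kappa_{\tau_m}\|_{L^2}^2\le C(d,n,\alpha)\|F\|_{L^2}^2.
\]
Since $\partial_j^n\phi^\kappa_t$ also solves~\eqref{eq:kraichnan-free}, the pathwise energy identity gives $\|\partial_j^n\phi^\kappa_1\|_{L^2}^2\le\|\partial_j^n\phi^\kappa_{\tau_m}\|_{L^2}^2$; and $|k_j|^{2n}\le|k_j|^{2m(1-\alpha-\delta)}$ whenever $k_j\ne 0$ (both sides vanishing when $k_j=0$). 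Taking expectations then yields~\eqref{eq:infinite order smoothing} up to a multiplicative factor of $(2\pi)^{2n}$. The only point requiring care is the compounding of constants through the iteration, all of which depend only on $d,n,\alpha$; the core ingredients---the commutativity of $|\partial_j|^s$ with the transport operator and the Markov restart of~\eqref{eq:anomalous regularization special}---are immediate from the Fourier support of $w_k$ and the time-homogeneity of the model.
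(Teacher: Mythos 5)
Your proof is correct and is essentially the same as the paper's: both commute $|\partial_j|^s$ with the equation via the $x_j$-translation invariance of $du_t$, absorb the logarithmic loss in~\eqref{eq:anomalous regularization special} at the cost of lowering the gain to $\frac{1-\alpha}{2}$ derivatives per step, and iterate a Chebyshev-in-time selection combined with forward propagation of the $L^2$ bound. The only cosmetic difference is that you propagate via the pathwise energy monotonicity where the paper invokes the anomalous dissipation estimate~\eqref{eq:anomalous dissipation special}; either works.
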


This result yields anomalous infinite order smoothing in some directions. This is somewhat surprising but not in conflict with other known theory, such as the Constantin--E--Titi-type commutator estimates~\cite{constantin_onsagers_1994}, since $\phi^\kappa_t$ will still be rough (only $C^{(1-\alpha)-}_x$) in the $x_1, x_2$ directions. This curious result is a consequence of having anomalous regularization estimates even for somewhat ``degenerate'' velocity fields---such as that given by Definition~\ref{defn:Kraichnan shear}---that have an a.s.\ exact symmetry (here being invariant under $x_j$ translations for $3 \leq j \leq d$). Since this corollary does not appear to be of significant physical interest and, as such, we do not discuss it further.

\subsection{Notation and general results}
\label{ss:general}

We now state our general results which will give Theorem~\ref{thm:anomalous dissipation and regularization special} and Theorem~\ref{thm:OC special} as essentially direct consequences (see Section~\ref{ss:theorems to theorems} for the relevant arguments). We first clarify and define some notation.

\subsubsection{Notation, definitions, and assumptions}

Throughout, we work on the torus $\T^d$ for $d \geq 2$, which we identify with $[0,1]^d /\sim$.  For $j \in \Z^d \backslash\{0\},$ we denote by $j^\perp \subseteq \R^d$ the orthogonal complement of the subspace of $\R^d$ spanned by $j$. Then $\Pi_{j^\perp}$ denotes orthogonal projection onto this subspace. For a vector $a  \in \Z^d \backslash \{0\}$, we let $\<a\>$ denote its span in $\Z^d$:
\[\<a\>:= \{r a : r \in \Z\}.\]

In addition to the stochastic equations~\eqref{eq:kraichnan-forced} and~\eqref{eq:kraichnan-free}, we will also consider the usual advection-diffusion equation, both with a stochastic forcing and in free decay:
\begin{align}
    \label{eq:advection-diffusion-forced}
    \dot \theta_t^\kappa -\kappa \Delta \theta_t^\kappa + v_t \cdot \nabla \theta_t^\kappa = F(x) dW_t,\\
     \label{eq:advection-diffusion-free}
\begin{cases}
    \dot \phi_t^\kappa -\kappa \Delta \phi_t^\kappa + v_t \cdot \nabla \phi_t^\kappa =0,\\
     \phi_0^\kappa(x) = F(x),
    \end{cases}
\end{align}
where $\kappa>0$ and as with $du_t$, we always take $v_t$ to be divergence-free: $\nabla \cdot v_t =0.$ $(v_t)_{t \in \R}$ will be taken to be random and independent of the forcing noise $W_t$. We assume throughout that $(v_t)_{t \in \R}$ is \textit{time-stationary} in law: for all $s \in \R$, $(v_{t + s})_{t \in \R} \stackrel{d}{=} (v_t)_{t \in \R}$. We will always assume that for some $\alpha \in (0,1),
$
    \begin{equation}
    \label{eq:v regularity}
    \sup_{t \in \R} \|v_t\|_{C^\alpha_x} \leq 1 \quad \text{and} \quad v \in C^0(\R \times \T^d).
    \end{equation}
We note then that~\eqref{eq:advection-diffusion-forced} and~\eqref{eq:advection-diffusion-free} are well-posed as $\kappa>0$, even though the $\kappa=0$ equations may not be well-posed.

Throughout we will assume that
\[\int_{\T^d} \theta^\kappa_t(x)\,dx = \int_{\T^d} \phi^\kappa_t(x)\,dx = \int_{\T^d} du_t(x)\,dx = \int_{\T^d} v_t(x)\,dx =0.\]
Since the advecting flows are divergence-free, this condition will hold for $\theta^\kappa_t$ and $\phi^\kappa_t$ provided $\int F(x)\,dx =0$. Since averages are conserved by the advection-diffusion equation, anomalous dissipation estimates such as~\eqref{eq:anomalous dissipation special} cannot hold (without modification) unless we assume that $\int F(x)\,dx =0$. We thus take this zero-mean condition as a global assumption for notational simplicity. Similarly, averages in the advecting flows induce trivial transformations on the solutions that we are not interested in, so we also take them to be zero-mean. 

For a function $f : \T^d \to \R$, we denote the Fourier transform $\hat f : \Z^d \to \C$ so that
\[f(x) = \sum_{k \in \Z^d} \hat f(k) e^{2\pi i k \cdot x}.\]
Because we assume all of our functions to be zero-mean throughout, we variously view the Fourier transform as a function $\Z^d \to \C$ and a function $\Z^d \backslash\{0\} \to \C$.

We define $\sigma(H)$ regularity spaces which generalize $H^s$ spaces.
\begin{definition}[$\sigma(H)$-spaces]
    For a function $\sigma : [1,\infty) \to (0,\infty)$, we define the $\sigma(H)$ norm on zero-mean functions,
    \[\|f\|_{\sigma(H)} := \Big(\sum_{k \in \Z^d \backslash\{0\}} \sigma(|k|)^2 |\hat f(k)|^2 \Big)^{1/2}.\]
\end{definition}
We note that $\sigma(H) = H^s$ for $\sigma(r) = r^s$. We now give a general definition of what we mean for the velocity fields $du_t$ or $v_t$ to exhibit anomalous dissipation or regularization. We emphasize that here and throughout the paper, all constants are \textit{independent of} $\kappa>0.$

\begin{definition}[Anomalous regularization and dissipation]
\label{defn:anomalous}
    We say that the velocity field $du_t$ \textit{exhibits anomalous dissipation} if there exists $C>0$ such that for all $F \in L^2(\T^d)$ with $\int F(x)\,dx =0$ and all $\kappa \in (0,1]$, letting $\phi^\kappa_t$ be the solution to~\eqref{eq:kraichnan-free}, then for all $t>0,$
    \[\E \|\phi^\kappa_t\|_{L^2(\T^d)}^2 \leq C e^{- C^{-1} t} \|\phi_0\|_{L^2(\T^d)}^2.\]
    For a function $\sigma : [1,\infty) \to (0,\infty)$, we say that $du_t$ \textit{exhibits anomalous regularization up to $\sigma(H)$} if there exists $C>0$ such that for all $F \in L^2(\T^d)$ with $\int F(x)\,dx =0$ and all $\kappa \in (0,1]$, letting $\phi^\kappa_t$ be the solution to~\eqref{eq:kraichnan-free}, then
\[
        \E \int_0^1 \|\phi^\kappa_t\|_{\sigma(H)}^2\,dt \leq C\|\phi_0\|_{L^2(\T^d)}^2.
\]
    We take the analogous definitions for a time-correlated velocity field $v_t$, with~\eqref{eq:advection-diffusion-free} in place of~\eqref{eq:kraichnan-free}.
\end{definition}

\subsubsection{Bounds on the energy spectrum of the invariant measure}

We next want to state bounds on the invariant measures associated to~\eqref{eq:kraichnan-forced} and~\eqref{eq:advection-diffusion-forced}. As such, we first need a result guaranteeing these objects exist uniquely. This is rather straightforward as we always have $\kappa>0$, giving a global geometric contractivity under the natural coupling of different initial conditions (giving them the same noise). There is however one minor complication, which is that while we can consider $\theta^\kappa_t$ to be the Markov process on its own in the case of a white-in-time noise~\eqref{eq:kraichnan-forced}, it is no longer (necessarily) a Markov process for~\eqref{eq:advection-diffusion-forced}. For~\eqref{eq:advection-diffusion-forced}, we must instead keep track of the entire velocity field trajectory in order to ensure the Markov condition. That is the Markov process we consider is $((v_{s+t})_{s \in \R}, \theta^\kappa_t) \in C^0(\R \times \T^d) \times L^2(\T^d)$. It is then straightforward to verify this is indeed a Markov process.

\begin{proposition}[Invariant measures]
\label{prop:invariant measures}
    For any $\kappa>0$, $F \in L^2(\T^d)$ such that $\int F(x)\,dx =0$, and $w_k$ satisfying~\eqref{eq:dut-regularity} for some $\alpha\in(0,1)$, the Markov process $\theta^\kappa_t$ given by~\eqref{eq:kraichnan-forced} has a unique invariant probability measure $\mu^\kappa$ on the subspace of $L^2(\T^d)$ given by zero-mean functions. Further, letting $\phi^\kappa_t$ be the solution to~\eqref{eq:kraichnan-free}, then for all $k \in \Z^d \backslash \{0\},$ we have that
    \begin{equation}
        \label{eq:equilibrium equals free decay}
        \E_{\mu^\kappa} |\hat \theta^\kappa(k)|^2 = \int_0^\infty \E |\hat \phi^\kappa_t(k)|^2\,dt.
    \end{equation}

    Similarly, for any $\kappa>0$, $F \in L^2(\T^d)$ with $\int F(x)\,dx =0$, and $(v_t)_{t \in \R}$ having time-stationary law $\nu$ on $C^0(\R \times \T^d)$ almost surely satisfying~\eqref{eq:v regularity} for some $\alpha \in (0,1)$, the Markov process $((v_{s+t})_{s \in \R}, \theta^\kappa_t)$ where $\theta^\kappa_t$ solves~\eqref{eq:advection-diffusion-forced} has a unique invariant probability measure $\mu^\kappa$ on the subspace of $C^0(\R \times \T^d) \times L^2(\T^d)$ given by $((v_s)_{s \in \R}, \theta^\kappa)$ where $\int \theta^\kappa(x)\,dx =0$ such that $\mu^\kappa(d(v_s)_{s \in \R}, L^2(\T^d)) = \nu(d(v_s)_{s \in \R})$.\footnote{That is, for $((v_s)_{s \in \R}, \theta^\kappa)$ distributed according to $\mu^\kappa$, $v$ has the marginal law given by $\nu$.} Further, letting $\phi^\kappa_t$ be the solution to~\eqref{eq:advection-diffusion-free}, for all $k \in \Z^d \backslash \{0\},$ we have that
    \[\E_{\mu^\kappa} |\hat \theta^\kappa(k)|^2 = \int_0^\infty \E |\hat \phi^\kappa_t(k)|^2\,dt.\]
\end{proposition}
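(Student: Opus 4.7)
The plan is to proceed in three steps: establish existence and uniqueness of the invariant measure for fixed $\kappa>0$, derive the Fourier-mode identity by combining a Duhamel representation with a conditional Wiener isometry, and then transfer the argument to the time-correlated setting by enlarging the state space.

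For existence and uniqueness in the Kraichnan case, the key observation is that Stratonovich transport is $L^2$-preserving pathwise by incompressibility, so the It\^o energy balance gives
\[
d\|\theta^\kappa_t\|_{L^2}^2 = -2\kappa\|\nabla\theta^\kappa_t\|_{L^2}^2\,dt + \|F\|_{L^2}^2\,dt + 2\<\theta^\kappa_t, F\>\,dW_t.
\]
Taking expectations and invoking the Poincar\'e inequality on zero-mean functions yields a uniform-in-time bound on $\E\|\theta^\kappa_t\|_{L^2}^2$ together with a uniform time-averaged bound on $\E\|\nabla\theta^\kappa_t\|_{L^2}^2$. Tightness of the time-averaged laws in $L^2$ then follows from Rellich's compact embedding $H^1 \hookrightarrow L^2$, so Krylov--Bogoliubov produces an invariant measure $\mu^\kappa$. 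For uniqueness I use a synchronous coupling: two solutions driven by the same $W$ and $du$ have a difference satisfying the unforced free equation~\eqref{eq:kraichnan-free}, which contracts exponentially in $L^2$ for fixed $\kappa>0$ by the same energy balance (without forcing), precluding two distinct invariant measures.

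For the Fourier-mode identity, I exploit the linearity of~\eqref{eq:kraichnan-forced} in $\theta^\kappa$ to write the Duhamel representation
\[
\theta^\kappa_t = S_{0,t}\theta^\kappa_0 + \int_0^t S_{s,t} F\,dW_s,
\]
where $S_{s,t}$ is the random solution operator of~\eqref{eq:kraichnan-free} from time $s$ to $t$, measurable with respect to $\sigma(du_r : s\le r\le t)$ and hence independent of $W$. Conditioning on the full realization of $du$ renders $s\mapsto S_{s,t}F$ deterministic, so the conditional Wiener isometry gives
\[
\E\bigl[|\hat\theta^\kappa_t(k)|^2 \,\big|\, du\bigr] = |\widehat{S_{0,t}\theta^\kappa_0}(k)|^2 + \int_0^t |\widehat{S_{s,t}F}(k)|^2\,ds.
\]
Taking the unconditional expectation and using the time-stationarity of the white-in-time noise $du$ to identify $S_{s,t}F \stackrel{d}{=} S_{0,t-s}F = \phi^\kappa_{t-s}$, then letting $t\to\infty$: the first term vanishes by the contractivity from step one, while the left-hand side converges to $\E_{\mu^\kappa}|\hat\theta^\kappa(k)|^2$ via uniqueness of $\mu^\kappa$ combined with the uniform second-moment bound. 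This yields~\eqref{eq:equilibrium equals free decay}.

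The time-correlated case follows the same template after enlarging the state to the Markov process $((v_{s+t})_{s\in\R}, \theta^\kappa_t)$. Any invariant measure must have $\nu$ as its velocity marginal by hypothesis, and uniqueness of the $\theta^\kappa$-marginal is again obtained by synchronous coupling of the $\theta^\kappa$-component with shared $v$ and $W$. The Duhamel identity now conditions on the entire trajectory of $v$ (independent of $W$); writing $S_{s,t}^v$ for the solution operator of~\eqref{eq:advection-diffusion-free}, stationarity of $\nu$ gives $S_{s,t}^v F \stackrel{d}{=} \phi^\kappa_{t-s}$ and the rest of the proof is identical. The main technical subtlety throughout is the apparent non-adaptedness of the Duhamel integrand $s\mapsto S_{s,t} F$ in the joint filtration of $(W,du)$ (respectively $(W,v)$), since $S_{s,t}$ depends on the velocity noise over $[s,t]$ and is thus forward-looking; this is resolved precisely by conditioning on the velocity noise, which is independent of $W$, so that in the residual filtration generated by $W$ alone the integrand is deterministic and the Wiener isometry applies cleanly.
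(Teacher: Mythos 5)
Your proposal is correct in substance and shares the analytic core of the paper's proof---the Duhamel representation $\theta^\kappa_t = S_{0,t}\theta^\kappa_0 + \int_0^t S_{s,t}F\,dW_s$, conditioning on the velocity noise (which is independent of $W$) to apply the Wiener/It\^o isometry, and time-stationarity of the velocity to identify $S_{s,t}F \stackrel{d}{=} \phi^\kappa_{t-s}$---but it routes existence and uniqueness differently. The paper does not use Krylov--Bogoliubov or any compactness at all: it applies the stationarity shift \emph{inside} the Duhamel formula to rewrite the forced solution, in law, as $\int_0^t \mathcal{T}^{v}_{-s,0}F\,dW_s$ plus an exponentially small remainder, and then lets $t\to\infty$ to obtain the invariant measure \emph{explicitly} as the law of $\big(v, \int_0^\infty \mathcal{T}^v_{-s,0}F\,dW_s\big)$; existence, uniqueness, and the identity~\eqref{eq:equilibrium equals free decay} all fall out of this single pullback representation (the identity by It\^o isometry applied to the explicit formula). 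Your forward-in-time route buys nothing extra here but is perfectly legitimate; its cost is precisely the additional machinery you invoke (Feller property, tightness via Rellich, a separate coupling argument), all of which the pullback construction makes unnecessary.

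One step needs tightening. You claim that $\E|\hat\theta^\kappa_t(k)|^2 \to \E_{\mu^\kappa}|\hat\theta^\kappa(k)|^2$ follows from ``uniqueness of $\mu^\kappa$ combined with the uniform second-moment bound.'' Convergence in law of $\theta^\kappa_t$ to $\mu^\kappa$ together with a uniform bound on $\E\|\theta^\kappa_t\|_{L^2}^2$ does \emph{not} imply convergence of the second moments $\E|\hat\theta^\kappa_t(k)|^2$: for that one needs uniform integrability of $|\hat\theta^\kappa_t(k)|^2$, i.e.\ control of a moment strictly higher than the second, which you have not established. The gap is easily closed with tools already in your proposal: either start the forced equation from stationarity, $\theta^\kappa_0 \sim \mu^\kappa$ independent of the noises (so the left-hand side equals $\E_{\mu^\kappa}|\hat\theta^\kappa(k)|^2$ for every $t$, and the initial-data term is $\leq e^{-C^{-1}\kappa t}\,\E_{\mu^\kappa}\|\theta^\kappa\|_{L^2}^2 \to 0$), or run your synchronous coupling against a stationary solution $\tilde\theta^\kappa_t$ and use Cauchy--Schwarz,
\begin{equation*}
\big|\E|\hat\theta^\kappa_t(k)|^2 - \E|\hat{\tilde\theta}^\kappa_t(k)|^2\big| \leq \big(\E\|\theta^\kappa_t - \tilde\theta^\kappa_t\|_{L^2}^2\big)^{1/2}\big(2\E\|\theta^\kappa_t\|_{L^2}^2 + 2\E\|\tilde\theta^\kappa_t\|_{L^2}^2\big)^{1/2} \longrightarrow 0,
\end{equation*}
which upgrades the weak convergence to convergence of the relevant quadratic functional. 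With either fix, your argument goes through in both the white-in-time and correlated-in-time settings.
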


We now state the straightforward proposition that, if we have anomalous dissipation and anomalous regularization up to some regularity, then we get moment bounds in that regularity for the invariant measure.

\begin{proposition}[Upper bounds on the invariant measure]
    \label{prop:OC upper bounds}
    Suppose that $du_t$ has coefficients $w_k$ satisfying~\eqref{eq:dut-regularity} for some $\alpha \in (0,1)$ and exhibits anomalous dissipation and anomalous regularization up to $\sigma(H)$ for some $\sigma : [1,\infty) \to (0,\infty).$ Fix $F \in L^2(\T^d)$ with $\int F(x)\,dx =0$ and $\kappa \in (0,1]$, then let $\mu^\kappa$ be unique invariant measure for~\eqref{eq:kraichnan-forced} from Proposition~\ref{prop:invariant measures}. Then there exists a $C>0$, depending on the constants in Definition~\ref{defn:anomalous}, such that
    \[\E_{\mu^\kappa} \|\theta^\kappa\|_{\sigma(H)}^2 \leq C \|F\|_{L^2(\T^d)}^2.\]
    The same result holds with $v_t$ satisfying~\eqref{eq:v regularity} for some $\alpha \in (0,1)$ in place of $du_t$ and~\eqref{eq:advection-diffusion-forced} in place of~\eqref{eq:kraichnan-forced}.
\end{proposition}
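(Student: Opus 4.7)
The plan is to reduce the invariant-measure moment bound to an integrated estimate on the freely decaying solution, via the identity from Proposition~\ref{prop:invariant measures}, and then close that integrated estimate by combining anomalous regularization on unit-time windows with the exponential $L^2$ decay given by anomalous dissipation. Concretely, by Proposition~\ref{prop:invariant measures} and Tonelli we have
\[
    \E_{\mu^\kappa} \|\theta^\kappa\|_{\sigma(H)}^2 = \sum_{k \in \Z^d\setminus\{0\}} \sigma(|k|)^2 \int_0^\infty \E|\hat \phi^\kappa_t(k)|^2 \,dt = \int_0^\infty \E\|\phi^\kappa_t\|_{\sigma(H)}^2\, dt,
\]
so the task is to bound this time integral by $C\|F\|_{L^2}^2$, uniformly in $\kappa \in (0,1]$.

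Next I would split $\int_0^\infty = \sum_{n=0}^\infty \int_n^{n+1}$ and apply anomalous regularization on each unit window with ``initial time'' $n$. For the Kraichnan setting, the process $\phi^\kappa_t$ is Markov, and the noise increments $(du_{n+s})_{s \geq 0}$ are independent of $\F_n := \sigma(du_r : r \leq n)$; hence conditionally on $\F_n$, the solution $(\phi^\kappa_{n+s})_{s \in [0,1]}$ has the law of a solution to~\eqref{eq:kraichnan-free} driven by a fresh copy of $du$ and started from the (conditionally deterministic) datum $\phi^\kappa_n$. The time-stationarity assumption on $du_t$ means the regularization constant in Definition~\ref{defn:anomalous} is the same on every window, so
\[
    \E\Big[\int_n^{n+1} \|\phi^\kappa_t\|_{\sigma(H)}^2 \, dt \,\Big|\, \F_n\Big] \leq C \|\phi^\kappa_n\|_{L^2}^2.
\]
For the $v_t$ setting, the same argument works once one passes to the augmented state $((v_{s+\cdot})_{s\in\R}, \phi^\kappa_t)$ as in Proposition~\ref{prop:invariant measures}: the conditional law of $\phi^\kappa_{n+\cdot}$ given $\F_n$ is again that of a free-decay solution advected by a time-shifted copy of $v$, and time-stationarity of $v$ lets us apply Definition~\ref{defn:anomalous} with the same constant.

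Taking expectations and invoking anomalous dissipation,
\[
    \E\int_n^{n+1} \|\phi^\kappa_t\|_{\sigma(H)}^2\,dt \leq C\,\E\|\phi^\kappa_n\|_{L^2}^2 \leq C^2 e^{-C^{-1} n} \|F\|_{L^2}^2.
\]
Summing the geometric series in $n$ yields
\[
    \int_0^\infty \E\|\phi^\kappa_t\|_{\sigma(H)}^2 \,dt \leq C' \|F\|_{L^2}^2,
\]
which combined with the identity in the first paragraph gives the desired bound. I do not anticipate a genuine obstacle: the main point requiring a touch of care is justifying the conditional application of the Definition~\ref{defn:anomalous} bound with a random (rather than deterministic) initial datum, which in both settings is immediate from the Markov/independent-increments structure together with time-stationarity of the advecting field. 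The Tonelli step is legitimate because all integrands are nonnegative.
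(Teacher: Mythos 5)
Your proof is correct and is essentially the paper's own argument: the identity $\E_{\mu^\kappa}\|\theta^\kappa\|_{\sigma(H)}^2 = \int_0^\infty \E\|\phi^\kappa_t\|_{\sigma(H)}^2\,dt$ from Proposition~\ref{prop:invariant measures}, a splitting of the time integral into unit windows, anomalous regularization applied on each window with datum $\phi^\kappa_n$, exponential decay of $\E\|\phi^\kappa_n\|_{L^2}^2$ from anomalous dissipation, and summation of the geometric series. Your added care in justifying the restart with a random datum (via the Markov property and independence of the noise increments) makes explicit a step the paper leaves implicit in the Kraichnan case; note only that in the correlated-in-time case your appeal to stationarity is no more rigorous than the paper's terse proof, since conditioning on $\mathcal{F}_n$ does not in general preserve the law of the future of a merely stationary velocity field.
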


We now state our general result for correlated-in-time advecting flows giving bounds on the energy spectrum of the invariant measure under the assumption of anomalous dissipation and regularization up to some general regularity index.

\begin{theorem}[Lower bounds on the invariant measure for correlated-in-time models]
\label{thm:correlated lower bound}

    Let $v_t$ almost surely satisfy~\eqref{eq:v regularity} for some $\alpha \in (0,1)$. Suppose that $v_t$ exhibits anomalous dissipation and anomalous regularization up to $\sigma(H)$ for some $\sigma : [1,\infty) \to (0,\infty)$ with
    \begin{equation}
        \label{eq:sigma lower bound correlated}
           \sigma(r) \geq \frac{r^\beta}{(\log r + 1)^m}.
    \end{equation}
 
    Then for any $F \in L^2(\T^d)$ with $\int F(x)\,dx =0$, there exists $r_0(F) >0$ and a constant $C>0$, depending on $\beta, m,$ and the constants appearing in the anomalous dissipation and regularization estimates for $v_t$, such that for all $r \in (\kappa^{\frac{1}{1+\alpha}}, r_0),$
    \begin{equation}
    \label{eq:lower bound correlated general}
    \sum_{C^{-1} r^{- \frac{1+\alpha}{2(1-\beta)}}(\log r^{-1})^{-\frac{m}{1-\beta}} \leq |k| \leq C r^{-\frac{1-\alpha}{2\beta}} (\log r^{-1})^{\frac{m}{\beta}}} \E_{\mu^\kappa} |\hat \theta^\kappa(k)|^2  \geq C^{-1} r^{1-\alpha} \|F\|_{L^2(\T^d)}^2,
    \end{equation}
    where $\mu^\kappa$ is the unique invariant measure from Proposition~\ref{prop:invariant measures}.
    In particular when $\beta = \frac{1-\alpha}{2}$, for all $r \in (\kappa^{\frac{1}{1+\alpha}}, r_0),$ we have
        \begin{align}
     C^{-1} r^{1-\alpha} \|F\|_{L^2(\T^d)}^2 &\leq \sum_{C^{-1} r^{- 1}(\log r^{-1})^{-\frac{2m}{\alpha+1}} \leq |k| \leq C r^{-1} (\log r^{-1})^{\frac{2m}{1-\alpha}}} \E_{\mu^\kappa} |\hat \theta^\kappa(k)|^2 
     \notag\\& \qquad\qquad\qquad\qquad\qquad\leq C (\log r^{-1})^{\frac{4m}{1+\alpha}} r^{1-\alpha} \|F\|_{L^2(\T^d)}^2. 
        \label{eq:lower bound correlated particular}
    \end{align}
\end{theorem}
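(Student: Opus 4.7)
My plan starts from Proposition~\ref{prop:invariant measures}, which provides the identity
\[\sum_{R_- \leq |k| \leq R_+} \E_{\mu^\kappa} |\hat\theta^\kappa(k)|^2 = \int_0^\infty \E \sum_{R_- \leq |k| \leq R_+} |\hat\phi^\kappa_t(k)|^2\,dt,\]
reducing the target (\ref{eq:lower bound correlated general}) to a time-integrated statement about the free-decay solution $\phi^\kappa_t$. I will write the annular mass as the difference of the $|k|\geq R_-$ tail and the $|k|>R_+$ tail, and estimate each separately.

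The $|k|>R_+$ tail is controlled by anomalous regularization. Iterating the $[0,1]$-bound of Definition~\ref{defn:anomalous} over integer time intervals, using time-stationarity of $v_t$ together with anomalous dissipation to control $\E\|\phi^\kappa_n\|_{L^2}^2$ geometrically in $n$, yields $\int_0^\infty \E\|\phi^\kappa_t\|_{\sigma(H)}^2\,dt \leq C\|F\|_{L^2}^2$ (equivalently Proposition~\ref{prop:OC upper bounds}). The assumption (\ref{eq:sigma lower bound correlated}) then gives
\[\sum_{|k|>R}\E_{\mu^\kappa}|\hat\theta^\kappa(k)|^2 \leq CR^{-2\beta}(\log R + 1)^{2m}\|F\|_{L^2}^2.\]
At $R = R_+$ this is a small fraction of $r^{1-\alpha}\|F\|_{L^2}^2$ by the choice of the constant in $R_+$; at $R = R_-$ the same estimate produces the matching upper bound in (\ref{eq:lower bound correlated particular}) after computing the exponents (which for $\beta=(1-\alpha)/2$ combine to $r^{1-\alpha}(\log r^{-1})^{4m/(1+\alpha)}$).

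The lower bound on the $|k|\geq R_-$ tail is the heart of the argument. Write this tail as $\int_0^\infty \E\|\phi^\kappa_t\|_{L^2}^2\,dt$ minus the time-integrated low-frequency mass $\int_0^\infty \E \sum_{|k|<R_-}|\hat\phi^\kappa_t(k)|^2\,dt$. For the first piece, I will establish a $\kappa$-uniform lower bound $\int_0^\infty \E\|\phi^\kappa_t\|_{L^2}^2\,dt \geq c\|F\|_{L^2}^2$: monotonicity of $t\mapsto\|\phi^\kappa_t\|_{L^2}^2$ (from $\partial_t\|\phi^\kappa_t\|_{L^2}^2 = -2\kappa\|\nabla\phi^\kappa_t\|_{L^2}^2$) reduces this to a pointwise short-time estimate, and testing $\phi^\kappa_t$ against a smooth approximation $F_\delta$ of $F$ with the advection contribution controlled by $\|v_t\|_{L^\infty}\leq C\|v_t\|_{C^\alpha}\leq C$ gives $\|\phi^\kappa_t\|_{L^2}^2 \geq c\|F\|_{L^2}^2$ over a $\kappa$-uniform time window of length $\sim \|F_\delta\|_{L^2}/\|\nabla F_\delta\|_{L^2}$. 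For the low-frequency piece, the essential input is the commutator bound
\[\bigl\|\bigl[v_t\cdot\nabla, P_{<R}\bigr]\phi\bigr\|_{L^2} \lesssim R^{1-\alpha}\|v_t\|_{C^\alpha}\|\phi\|_{L^2},\]
where $P_{<R}$ is a smooth Fourier cutoff to $|k|<R$; this follows from the $C^\alpha_x$ regularity of $v_t$ by the standard integration-by-parts/H\"older estimate on the convolution kernel. An energy estimate for $P_{<R_-}\phi^\kappa_t$ driven by this commutator, combined with anomalous dissipation to suppress long-time contributions and the restriction $r<r_0(F)$ (with $r_0(F)$ encoding the Fourier scale of $F$), produces the required low-frequency upper bound. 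The restriction $r>\kappa^{1/(1+\alpha)}$ keeps us in the inertial range so that diffusion at wavenumber $\sim r^{-1}$ is negligible over the cascade timescale $\sim r^{1-\alpha}$.

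The main obstacle is the sharp quantitative low-frequency bound. A naive Gr\"onwall application of the commutator estimate produces exponential factors $e^{CR_-^{1-\alpha}}$, so one must instead balance short-time commutator-driven growth (linear in $t$) against long-time anomalous-dissipation decay (exponential) at the intermediate timescale $\sim R_-^{-(1-\alpha)}$. Tracking the logarithmic corrections through this balance is mechanical and produces the $(\log r^{-1})$ factors appearing in $R_\pm$.
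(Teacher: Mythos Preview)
Your high-frequency control via anomalous regularization (the $|k|>R_+$ tail and the upper bound in~\eqref{eq:lower bound correlated particular}) is correct and matches the paper. The gap is in the low-frequency step. Your decomposition writes the $|k|\ge R_-$ tail as $\int_0^\infty\E\|\phi^\kappa_t\|_{L^2}^2\,dt - \int_0^\infty\E\|P_{<R_-}\phi^\kappa_t\|_{L^2}^2\,dt$, and you then need this difference of two $O(1)$ quantities to be bounded below by $C^{-1}r^{1-\alpha}\|F\|_{L^2}^2$. The commutator bound you quote, $\|[v_t\cdot\nabla,P_{<R}]\phi\|_{L^2}\lesssim R^{1-\alpha}\|\phi\|_{L^2}$, controls the \emph{rate of exchange} between low and high modes, so in the integrated flux identity it only yields
\[
\|P_{<R_-}F\|_{L^2}^2 \;\lesssim\; \bigl(\kappa R_-^2+R_-^{1-\alpha}\bigr)\int_0^\infty \E\|\phi^\kappa_t\|_{L^2}^2\,dt,
\]
a lower bound on the \emph{total} mass $\E_{\mu^\kappa}\|\theta^\kappa\|_{L^2}^2$, not on the high-frequency piece. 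There is no mechanism in your sketch that forces $\int_0^\infty\E\|P_{<R_-}\phi^\kappa_t\|_{L^2}^2\,dt$ to sit strictly below $\int_0^\infty\E\|\phi^\kappa_t\|_{L^2}^2\,dt$ by a margin of order $r^{1-\alpha}$; the ``short-time growth versus long-time decay balance'' you describe would at best bound this difference from \emph{above}, which is the wrong direction.

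The paper's argument is structurally different. It integrates the Constantin--E--Titi estimate on the mollified energy $\|\eta_r*\phi^\kappa_t\|_{L^2}^2$ (Proposition~\ref{prop:CET}) over $t\in[0,\infty)$ to obtain
\[
\tfrac14\|F\|_{L^2}^2 \;\le\; Cr^{-(1-\alpha)}\sum_{k}\min(|k|^2r^2,1)\,\E_{\mu^\kappa}|\hat\theta^\kappa(k)|^2.
\]
The essential point is the weight $\min(|k|^2r^2,1)$ on the right: for $|k|<a$ it contributes the factor $|k|^2r^2$, so that after applying the regularization bound the low-frequency piece is $\lesssim r^2 a^{2(1-\beta)}(\log a)^{2m}\|F\|_{L^2}^2$, which is made $\le\tfrac18 r^{1-\alpha}\|F\|_{L^2}^2$ by the choice of $a=R_-$. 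In your unweighted decomposition the low-frequency piece carries no such factor and is simply $O(1)$, which cannot be absorbed. The fix is to replace your $P_{<R_-}$ energy estimate by the refined commutator pairing that produces the difference-quotient weight (this is precisely~\eqref{eq:CET fourier}); once that weight is in place, the three-piece split $|k|<a$, $a\le|k|\le b$, $|k|>b$ with both tails handled by regularization goes through as in the paper.
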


Next we give our general result for white-in-time advecting flows giving bounds on the energy spectrum of the invariant measure. It is this result that will imply Theorem~\ref{thm:OC special}.

\begin{theorem}[Lower bounds on the invariant measure for white-in-time models]
\label{thm:white lower bound}
    Let $du_t$ have coefficients $w_k$ satisfying~\eqref{eq:dut-regularity} for some $\alpha \in (0,1)$. Suppose that $du_t$ exhibits anomalous dissipation and anomalous regularization up to $\sigma(H)$ for some $\sigma : [1,\infty) \to (0,\infty)$ with
    \begin{equation}
        \label{eq:sigma lower bound kraichnan}
           \sigma(r) \geq \frac{r^\beta}{(\log r + 1)^m}.
    \end{equation}
 
    Then for any $F \in L^2(\T^d)$ with $\int F(x)\,dx =0$, there exists $r_0(F) >0$ and a constant $C>0$, depending on $\beta, m,$ and the constants appearing in the anomalous dissipation and regularization estimates for $du_t$, such that for all $r \in (\kappa^{\frac{1}{2\alpha}}, r_0),$
    \begin{equation}
        \label{eq:lower bound white general}
          \sum_{ C^{-1} r^{-\frac{\alpha}{1-\beta}}(\log r^{-1})^{-\frac{m}{1-\beta}}  \leq |k| \leq C r^{-\frac{1-\alpha}{\beta}} (\log r^{-1})^{\frac{m}{\beta}}} \E_{\mu^\kappa} |\hat \theta^\kappa(k)|^2  \geq C^{-1} r^{2(1-\alpha)} \|F\|_{L^2(\T^d)}^2,
    \end{equation}
    where $\mu^\kappa$ is the unique invariant measure from Proposition~\ref{prop:invariant measures}.
    In particular when $\beta = 1-\alpha$, for all $r \in (\kappa^{\frac{1}{2\alpha}}, r_0),$ we have
        \begin{align}
     C^{-1} r^{2(1-\alpha)} \|F\|_{L^2(\T^d)}^2 &\leq \sum_{ C^{-1} r^{-1}(\log r^{-1})^{-\frac{m}{\alpha}}  \leq |k| \leq C r^{-1} (\log r^{-1})^{\frac{m}{1-\alpha}}} \E_{\mu^\kappa} |\hat \theta^\kappa(k)|^2  
     \notag\\&
     \qquad\qquad\qquad\qquad
    \leq  C (\log r^{-1})^{\frac{2m}{\alpha}} r^{2(1-\alpha)} \|F\|_{L^2(\T^d)}^2.
         \label{eq:lower bound white particular}
    \end{align}
\end{theorem}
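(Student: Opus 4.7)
The plan is to reduce via Proposition~\ref{prop:invariant measures} to a time-integrated free-decay problem and then bound the annular sum $\sum_{k \in A}\E_{\mu^\kappa}|\hat\theta^\kappa(k)|^2$, where $A = \{K_1 \leq |k| \leq K_2\}$, by writing it as $\sum_{|k| \geq K_1}\E_{\mu^\kappa}|\hat\theta^\kappa(k)|^2 - \sum_{|k| > K_2}\E_{\mu^\kappa}|\hat\theta^\kappa(k)|^2$ and controlling each piece separately.

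For the high-frequency upper bound, the hypothesis $\sigma(|k|)^2 \geq |k|^{2\beta}/(\log|k|+1)^{2m}$ combined with Proposition~\ref{prop:OC upper bounds} gives
\[
\sum_{|k| > K_2}\E_{\mu^\kappa}|\hat\theta^\kappa(k)|^2 \leq \frac{(\log K_2 + 1)^{2m}}{K_2^{2\beta}}\E_{\mu^\kappa}\|\theta^\kappa\|_{\sigma(H)}^2 \leq C\frac{(\log K_2+1)^{2m}}{K_2^{2\beta}}\|F\|_{L^2}^2,
\]
which for $K_2 = Cr^{-(1-\alpha)/\beta}(\log r^{-1})^{m/\beta}$ simplifies to $\leq C r^{2(1-\alpha)}\|F\|_{L^2}^2$.

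For the lower bound on $\sum_{|k| \geq K_1}\E_{\mu^\kappa}|\hat\theta^\kappa(k)|^2$, I would apply the stationary forcing--dissipation identity
\[
2(2\pi)^2 \kappa \sum_k |k|^2 \E_{\mu^\kappa}|\hat\theta^\kappa(k)|^2 = \|F\|_{L^2}^2,
\]
obtained by Itô on $\|\theta^\kappa_t\|_{L^2}^2$ using $\int F \, dx = 0$ and that Stratonovich transport against a divergence-free field conserves $L^2$ pathwise. Splitting the sum at $K_1$ and bounding the low-frequency piece by $\kappa K_1^2 \E_{\mu^\kappa}\|\theta^\kappa\|_{L^2}^2 \leq C\kappa K_1^2 \|F\|_{L^2}^2$ yields $\sum_{|k| \geq K_1}|k|^2\E_{\mu^\kappa}|\hat\theta^\kappa(k)|^2 \gtrsim \|F\|_{L^2}^2/\kappa$, at least once $\kappa K_1^2$ is controlled relative to the constant in the dissipation identity.

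The main obstacle is converting this $|k|^2$-weighted lower bound into an unweighted lower bound on the annulus. The restriction $r > \kappa^{1/(2\alpha)}$ places $K_2$ strictly below the Obukhov--Corrsin dissipation wavenumber $\kappa^{-1/(2\alpha)}$; this combined with the $\sigma(H)$ bound (used with a secondary cutoff above $K_2$) should limit how much of the dissipation budget is captured by $\{|k| > K_2\}$, forcing the remainder into the annulus. The injection scale $r_0 = r_0(F)$ with $\sum_{|k| \leq r_0^{-1}}|\hat F(k)|^2 \geq \tfrac{1}{2}\|F\|_{L^2}^2$ ensures that for $r < r_0$ the annulus sits strictly above the support of the forcing, so its mass is populated by the cascade rather than by direct injection; the logarithmic cutoffs in $K_1$ and $K_2$ then need to be tuned so that the resulting difference matches~\eqref{eq:lower bound white general}, with~\eqref{eq:lower bound white particular} following from~\eqref{eq:lower bound white general} at $\beta = 1 - \alpha$ by direct substitution.
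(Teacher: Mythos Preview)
Your high-frequency upper bound via the $\sigma(H)$ estimate is correct and matches the paper. The gap is in your lower bound. The stationary energy balance
\[
c\|F\|_{L^2}^2 = \kappa \sum_k |k|^2 \E_{\mu^\kappa}|\hat\theta^\kappa(k)|^2
\]
carries the unbounded weight $\kappa|k|^2$, and you cannot localize it to the annulus. For the tail $|k|>K_2$ you would need to show $\sum_{|k|>K_2}\kappa|k|^2\E_{\mu^\kappa}|\hat\theta^\kappa(k)|^2$ is small, but the only available bound, $\E_{\mu^\kappa}\|\theta^\kappa\|_{\sigma(H)}^2\le C\|F\|_{L^2}^2$, controls at most an $|k|^{2\beta}$-weighted sum with $\beta<1$; the missing factor $|k|^{2(1-\beta)}$ is unbounded on $\{|k|>K_2\}$. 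In fact the physical picture goes against you: since $K_2<\kappa^{-1/(2\alpha)}$, the dissipation scale lies above $K_2$, so for $\alpha\ge 1/2$ (and generically) the tail captures the \emph{bulk} of the dissipation budget, not a negligible fraction. Your low-frequency step also fails as written: $\kappa K_1^2\E_{\mu^\kappa}\|\theta^\kappa\|_{L^2}^2$ is not bounded because $\kappa K_1^2\sim \kappa r^{-2}\le r^{-2(1-\alpha)}\to\infty$ as $r\to 0$ (though this particular issue is repairable by using the $\sigma(H)$ bound instead of the $L^2$ bound on low modes).

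The paper replaces the global energy balance by a \emph{scale-$r$} balance obtained from a Constantin--E--Titi-type commutator estimate applied to the two-point function $g^\kappa_t(x):=\int\E\phi^\kappa_t(y)\phi^\kappa_t(y+x)\,dy$, which solves $\dot g^\kappa_t=2\kappa\Delta g^\kappa_t+(D(0)-D(x)):\nabla^2 g^\kappa_t$. Mollifying and integrating in $t$ yields
\[
\tfrac12\|F\|_{L^2}^2 \;\le\; C\,r^{-2(1-\alpha)}\sum_{k}\min(|k|^2r^2,1)\,\E_{\mu^\kappa}|\hat\theta^\kappa(k)|^2,
\]
valid for $r\in(\kappa^{1/(2\alpha)},r_0)$. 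The crucial point is that the weight $\min(|k|^2r^2,1)$ is \emph{bounded} for $|k|\ge r^{-1}$, so the high-frequency tail can now be absorbed using the $\sigma(H)$ bound exactly as you did for $|k|>K_2$, while on low frequencies the factor $|k|^2r^2$ plays the same role as your $\kappa|k|^2$ but with $r^2$ in place of $\kappa$ --- and $r^2K_1^{2(1-\beta)}(\log K_1)^{2m}$ \emph{is} small for the stated choice of $K_1$. Both tails are then reabsorbed, leaving the annular lower bound.
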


\begin{remark}
\label{rmk:dissipation range}
    The above results, as well as Theorem~\ref{thm:OC special}, include the restriction that $r > \kappa^{\frac{1}{2\alpha}}$ in the white-in-time case and $r> \kappa^{\frac{1}{1+\alpha}}$ in the time correlated case. It is clear there must be some $\kappa$ dependent lower bound on $r$, since the presence of the diffusion $\kappa \Delta$ ensures that
    \[\E_{\mu^\kappa} \|\theta^\kappa\|_{H^1}^2 \leq \kappa^{-1} \|F\|_{L^2(\T^d)}^2.\]
    Thus on very small scales (dependent on $\kappa$), we must have the Fourier coefficients of $\theta^\kappa$ must decay faster than~\eqref{eq:obukhov corrsin} or~\eqref{eq:obukhov corrsin kraichnan} predict. The interval on which the Fourier coefficients decay more quickly due to the dissipation is known as the \textit{dissipation range}. The values we get for the transition to the dissipation range are in agreement with the values predicted by dimensional analysis, as is suggested by the agreement of $\frac{1}{1+\alpha} = \frac{3}{4}$ when $\alpha = 1/3$, agreeing with the Kolmogorov lengthscale of K41 theory.
\end{remark}

The above results are of substantially less interest when $\beta < \frac{1-\alpha}{2}$ and $\beta < 1- \alpha$ for Theorem~\ref{thm:correlated lower bound} and Theorem~\ref{thm:white lower bound} respectively, since in that case we are really making algebraic-in-$r$ errors compared to summing the predictions~\eqref{eq:obukhov corrsin} and~\eqref{eq:obukhov corrsin kraichnan} respectively. However, they still provide nontrivial lower bounds that become increasingly good as $\beta$ increases to $\frac{1-\alpha}{2}$ or $1-\alpha$ respectively. In the $\beta = \frac{1-\alpha}{2}$ (or $\beta = 1-\alpha$) case we get matching upper and lower bounds on geometrically sized annuli as predicted by~\eqref{eq:obukhov corrsin}  (or~\eqref{eq:obukhov corrsin kraichnan})---where each of those claims is true up to logarithmic errors, just as in Theorem~\ref{thm:OC special}. These results thus show that (a version of) the Obukhov--Corrsin spectrum must hold whenever the advecting flow exhibits anomalous dissipation and anomalous regularization up to the sharp regularity space, modulo to logarithmic corrections. We note that while in Theorem~\ref{thm:anomalous dissipation and regularization special} and more generally in Theorem~\ref{thm:kraichnan}, we exhibit stochastic velocity fields $du_t$ that have the requisite anomalous dissipation and regularization properties, for the correlated-in-time case (to the best of our knowledge) proving that some velocity field $v_t$ satisfies the assumptions of Theorem~\ref{thm:correlated lower bound} for some $\beta >0$ remains an interesting open problem; see Section~\ref{ss:discussion} for further discussion.

\subsubsection{Anomalous regularization and dissipation for white-in-time models}

We now turn our attention to stating general conditions on the coefficients $w_k$ defining the stochastic advecting flow $du_t$ in~\eqref{eq:dut def} that give anomalous dissipation and regularization. It is by verifying these conditions (and bounding the function $S$ defined in~\eqref{eq:S-def}) for the special cases of Definition~\ref{defn:Kraichnan standard} and Definition~\ref{defn:Kraichnan shear} that we will get Theorem~\ref{thm:anomalous dissipation and regularization special}. Let us now give our hypotheses on the coefficients.

\begin{assumption}
    \label{asmp:wk}
    We fix $\alpha \in (0,1)$. Then we suppose that $(w_k)_{k \in \Z^d}$ is
    \begin{equation}
    \label{eq:C alpha condition}
    \sum_{k \in \Z^d \backslash\{0\}} |k|^{2\alpha} w_k^2 \leq 1.
\end{equation}
    We then let
\begin{equation}
\label{eq:S-def}
S(r) :=\sum_{|k| \leq r} |k|^{1+\alpha} w_k^2 .
\end{equation}
    We suppose the following structural conditions on $S$: there exists $r_0 \geq 4$, $\delta, \beta \in (0,1)$, and for all $K \geq 1$, $\Psi(K) >0$ such that for all $r \geq r_0$, we have the bounds
    \begin{align}
        \label{eq:S not too small} 
        S(r) &\geq \delta |r|^{\beta},
        \\
        \label{eq:nondegenerate}
    \inf_{v \in \R^d, |v|=1}\sum_{|k| \leq r}  |k|^{1+\alpha} w_k^2 |\Pi_{k^\perp} v| &\geq \delta S(r),\\
    \label{eq:geometric fluctuation bound}
S(Kr) &\leq \Psi(K) S(r).
    \end{align}
\end{assumption}

The first condition~\eqref{eq:C alpha condition} is (essentially) asking that $du_t \in C^\alpha_x$. We then define $S$ in~\eqref{eq:S-def}, which we will want to be as large as possible to get the maximal amount of regularization in~\eqref{eq:anomalous regularization kraichnan} below. As such, we are motivated to take $\alpha$ to be as large as possible so that~\eqref{eq:C alpha condition} holds (of course the relevant condition is that the sum is finite; if the sum is finite, we can rescale time in order to make that constant $1$). 

The final conditions~\eqref{eq:S not too small},~\eqref{eq:nondegenerate}, and~\eqref{eq:geometric fluctuation bound} are of a rather more technical nature.~\eqref{eq:S not too small} is almost necessary in order to get the anomalous dissipation. The condition is essentially requiring that $du_t$ doesn't have a full derivative, since if $du_t$ had a full spatial derivative, we'd have that for all $r>0$, $S(r) \leq \sum_{k} |k|^{1+\alpha} w_k^2 < \infty$, and so we couldn't possibly have the growth~\eqref{eq:S not too small} as $r \to \infty.$ A condition of this form is clearly necessary, as we couldn't possible have an anomalous dissipation estimate like~\eqref{eq:exponential decay} for spatially smooth advecting flows. The condition~\eqref{eq:geometric fluctuation bound} is purely technical and likely could be loosened in many different ways. However, it is a rather soft condition that is straightforwardly satisfied by all examples we are interested in.

The condition~\eqref{eq:nondegenerate} is more natural (and important) than~\eqref{eq:geometric fluctuation bound}. This condition essentially requires that all of the $w_k$ mass isn't concentrated on a single line in Fourier space. A condition like this is necessary, as the ``pure shear'' case---where e.g.\ $w_k = |k|^{-1/2 - \alpha} \indc_{k_2 = k_3 = \cdots = k_n  = 0}$ so that all of the mass of $w_k$ is purely concentrated on a line---does not exhibit anomalous dissipation or regularization (as can be shown by computing the solution explicitly). The condition~\eqref{eq:nondegenerate} quantitative non-degeneracy condition that excludes this case.

We now state our general result on anomalous dissipation and regularization.

\begin{theorem}[Anomalous regularization and dissipation for suitable white-in-time models]
\label{thm:kraichnan}
 Suppose $(w_k)_{k \in \Z^d \backslash\{0\}}$ satisfies Assumption~\ref{asmp:wk} for some $\alpha \in (0,1)$. Fix $F \in L^2(\T^d)$ with $\int F(x)\,dx =0$ and let $\phi^\kappa_t$ solve~\eqref{eq:kraichnan-free}. Then for all $R \geq r_0,$ there exists $C(\beta, \delta,R,\Psi)>0$ such that for all $\kappa \in (0,1],$ we have the bounds for all $t \geq 0$,
 \begin{align}
     \label{eq:exponential decay kraichnan}
     \E \|\phi^\kappa_t\|_{L^2(\T^d)}^2 \leq e^{-C^{-1}t}   \|\phi^\kappa_0\|_{L^2(\T^d)}^2,
     \\
     \label{eq:anomalous regularization kraichnan}
     \E \int_0^1 \sum_{k \in \Z^d \backslash \{0\}} \Big(\frac{S(R|k|)}{\log |k| + 1}\Big)^2 |\hat \phi^\kappa_t(k)|^2 \leq C \|\phi^\kappa_0\|_{L^2(\T^d)}^2,
 \end{align}
 where $S$ is given by~\eqref{eq:S-def}. Therefore, under Definition~\ref{defn:anomalous}, $du_t$ exhibits anomalous dissipation and anomalous regularization up to $\sigma_R(H)$ with
    \[\sigma_R(r) := \frac{S(Rr)}{\log r + 1}.\]
\end{theorem}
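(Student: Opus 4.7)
The plan is to derive a closed, deterministic ODE for the expected Fourier energy spectrum $f_\ell(t) := \E|\hat{\phi}^\kappa_t(\ell)|^2$ and then to estimate its dissipation through a weighted Poincar\'e-type inequality on the lattice $\Z^d \setminus \{0\}$. I would first convert the Stratonovich SPDE~\eqref{eq:kraichnan-free} into It\^o form; the It\^o--Stratonovich corrector for the transport noise $du_t$ is the (spatially constant) second-order operator $\tfrac12 \sum_k w_k^2 \Pi_{k^\perp} : \nabla^2$. Applying It\^o's formula to $|\hat{\phi}^\kappa_t(\ell)|^2$ and taking expectations, the corrector drift and the quadratic variation term combine into a conservative ``discrete Laplacian'' in $\ell$, and one obtains
\begin{equation*}
    \dot f_\ell = -2\kappa (2\pi)^2 |\ell|^2 f_\ell + \tfrac{(2\pi)^2}{2} \sum_k w_k^2 |\Pi_{k^\perp}\ell|^2 (f_{\ell+k} + f_{\ell-k} - 2 f_\ell).
\end{equation*}
The jump kernel $K(\ell, \ell') = w_{\ell-\ell'}^2 |\Pi_{(\ell-\ell')^\perp}\ell|^2$ is self-adjoint, since $\Pi_{(\ell-\ell')^\perp}$ annihilates $\ell-\ell'$.

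Next I would derive what the abstract calls the ``Fourier-space $\ell^p$ energy equality'' by summing the ODE against test weights. For the total mass $M(t) := \sum_\ell f_\ell(t) = \E\|\phi^\kappa_t\|_{L^2}^2$, conservativity of the jump operator yields $\dot M = -2\kappa(2\pi)^2 \sum_\ell |\ell|^2 f_\ell$, consistent with the pointwise-in-$\omega$ Stratonovich $L^2$-identity. For $M_2(t) := \sum_\ell f_\ell(t)^2$, a discrete integration by parts produces the Dirichlet-form identity
\begin{equation*}
    \dot M_2 = -4\kappa (2\pi)^2 \sum_\ell |\ell|^2 f_\ell^2 - (2\pi)^2 \sum_{\ell, k} w_k^2 |\Pi_{k^\perp}\ell|^2 (f_\ell - f_{\ell - k})^2.
\end{equation*}
The Dirichlet form on the right is the pool of dissipation that must be converted, via a functional inequality on the Fourier lattice, into the weighted mass estimate appearing in~\eqref{eq:anomalous regularization kraichnan}.

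At the heart of the argument I would then establish a weighted lattice Poincar\'e inequality of the schematic form
\begin{equation*}
    \sum_\ell \Big(\tfrac{S(R |\ell|)}{\log|\ell|+1}\Big)^2 u_\ell \leq C \sum_{\ell, k} w_k^2 |\Pi_{k^\perp}\ell|^2 (u_\ell - u_{\ell-k})^2 + (\text{low-mode boundary term}).
\end{equation*}
In this inequality each piece of Assumption~\ref{asmp:wk} plays a distinct role: the nondegeneracy~\eqref{eq:nondegenerate} prevents the Dirichlet form on the right from collapsing along any ``shear-like'' direction, the lower bound~\eqref{eq:S not too small} provides a quantitative rate of Fourier-space transport between dyadic scales, and the doubling condition~\eqref{eq:geometric fluctuation bound} allows the inequality to be localized and iterated across scales, with the $\log|\ell|+1$ factor absorbing the constants accumulated across the infinitely many dyadic annuli. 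Applying this to $f_\ell(t)$ and integrating the $M_2$-identity in time then yields~\eqref{eq:anomalous regularization kraichnan}.

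The main obstacle is the construction of this sharp Poincar\'e inequality: one must convert the radially summed, direction-averaged quantity $S(r)$ into effective control of the anisotropic jump rates $w_k^2 |\Pi_{k^\perp}\ell|^2$ at each individual lattice point, losing only a logarithmic factor uniformly across all scales. Once the Poincar\'e inequality is in hand, the exponential $L^2$-decay~\eqref{eq:exponential decay kraichnan} should follow: the pointwise monotonicity $d\|\phi^\kappa_t\|_{L^2}^2 \leq 0$ from the Stratonovich $L^2$-identity, combined with the anomalous regularization, forces low-mode Fourier mass to be transported outward at an $O(1)$ rate, where it is subsequently killed by the $\kappa|\ell|^2$ dissipation at the dissipation scale, giving exponential contraction with constants independent of $\kappa \in (0,1]$.
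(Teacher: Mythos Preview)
Your derivation of the ODE for $f_\ell = \E|\hat\phi^\kappa_t(\ell)|^2$ and the $M_2$-identity are correct (they are the $p=1$ and $p=2$ cases of the paper's $\ell^p$ energy equality). The genuine gap is in the Poincar\'e step. The inequality you propose,
\[
\sum_\ell \Big(\tfrac{S(R|\ell|)}{\log|\ell|+1}\Big)^2 u_\ell \;\leq\; C\sum_{\ell,k} w_k^2 |\Pi_{k^\perp}\ell|^2 (u_\ell - u_{\ell-k})^2 + (\text{boundary}),
\]
is not homogeneous: under $u\mapsto\lambda u$ the left side scales like $\lambda$ and the right like $\lambda^2$, so it fails for small $\lambda$ and cannot hold as a functional inequality. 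More concretely, your $M_2$-identity only furnishes the quadratic Dirichlet form $\sum w_k^2|\Pi_{k^\perp}\ell|^2(f_\ell - f_{\ell-k})^2$, and the best this controls is $\int_0^1\sum_\ell S(R|\ell|)^2 f_\ell^{\,2}\,dt$, not the linear quantity $\int_0^1\sum_\ell\sigma_R(|\ell|)^2 f_\ell\,dt$ demanded by~\eqref{eq:anomalous regularization kraichnan}. There is no way to pass from the $\ell^2$-weighted bound to the $\ell^1$-weighted bound without further input, since $\sum_\ell\sigma_R^2$ diverges.

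The missing idea is that one must use the \emph{full family} of $\ell^p$ energy equalities for $p\in(1,2]$, not just $p=2$. The paper first proves an $\ell^1$-type Poincar\'e inequality $\sum_k S(R|k|)^2 a_k \leq C\sum_{k,j}|j|^\alpha w_j^2 S(\cdot)|\Pi_{j^\perp}k|\,|a_{k+j}-a_k|$ (homogeneous of degree one), then bootstraps it via $|a^p-b^p|\leq \tfrac{p}{\sqrt{p-1}}\sqrt{a^p+b^p}\sqrt{(a^{p-1}-b^{p-1})(a-b)}$ and Cauchy--Schwarz to get $\sum_k S(R|k|)^2 a_k^p \leq \tfrac{C}{p-1}\sum w_j^2|\Pi_{j^\perp}k|^2(a_{k+j}^{p-1}-a_k^{p-1})(a_{k+j}-a_k)$. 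Integrating the $\ell^p$ energy equality yields $\int_0^1\sum S^2 a_k^p\,dt\leq \tfrac{C}{p-1}$; the $(p-1)^{-1}$ blow-up is the obstruction to taking $p\downarrow 1$ directly, and is precisely what produces the logarithmic loss after applying H\"older on doubly-exponential annuli $\{2^{2^n-1}\leq|k|<2^{2^{n+1}-1}\}$ with $p_n = 1+2^{-n}$. The exponential decay~\eqref{eq:exponential decay kraichnan} likewise comes from Gr\"onwall on the $\ell^p$ norms for $p>1$ (rate $\sim(p-1)$, independent of $\kappa$) followed by interpolation against the regularization bound---the $\kappa|\ell|^2$ term is never used, so your ``dissipation-scale'' mechanism is not the one at work here.
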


\subsection*{Acknowledgments}

We would like to thank Lucio Galeati, Sotirios Kotitsas, and Mario Maurelli for a stimulating discussion, as well Elias Hess-Childs for helpful feedback on an early draft.

\section{Discussion and previous results}

\label{ss:discussion}

We now provide further discussion of the results and give additional connections with the extensive mathematical literature.

\subsection{Anomalous dissipation, the Obukhov--Corrsin spectrum, and the importance of anomalous regularization}
\label{sss:anomalous dissipation}

The mathematical problem of anomalous dissipation of passive scalars---in which the solution to the advection-diffusion equation dissipates $L^2$ energy uniform in diffusivity despite the zero-diffusivity equation being formally energy-conserving---has seen substantial attention in recent years. While the results of the current work do include statements of anomalous dissipation in~\eqref{eq:anomalous dissipation special} and~\eqref{eq:exponential decay kraichnan}, they are not the primary interest of the paper.~\eqref{eq:exponential decay kraichnan} is a proper generalization of the previously known results, but the two most interesting cases covered in~\eqref{eq:anomalous dissipation special} were already treated in~\cite{rowan_anomalous_2024}. 

Anomalous dissipation is an important physical phenomenon underpinning the equilibrium (and non-equilibrium) theory of passive scalar turbulence and is a central ingredient to our arguments. Despite having a long history in the physics literature, appearing prominently in both the K41 theory of fluid turbulence and the Obukhov--Corrsin theory of passive scalar turbulence, the rigorous mathematical treatment of this phenomenon is quite a bit more recent. The first proof of anomalous dissipation for an advection-diffusion equation appeared in~\cite{drivas_anomalous_2022} and following that there have been many additional constructions, each focusing on a different aspect of the problem: see~\cite{colombo_anomalous_2023,armstrong_anomalous_2025,burczak_anomalous_2023,elgindi_norm_2024,hofmanova_anomalous_2025,johansson_anomalous_2024, hess-childs_universal_2025} among others.

Some of these results,~\cite{colombo_anomalous_2023,elgindi_norm_2024}, prove Obukhov--Corrsin-type upper bounds, proving that the scalar solution has uniform bounds in $C^\beta_x$ for all $\beta < \frac{1-\alpha}{2}$ for an advecting velocity field $u \in L^\infty_t C^\alpha_x$. Additionally, a qualitative form of the Obukhov--Corrsin lower bound~\cite[Section 5]{drivas_anomalous_2022} based on the argument of~\cite{constantin_onsagers_1994} is well known: if $\phi^\kappa$ exhibits anomalous dissipation, it cannot be uniformly bounded in $L^2_t C^\beta_x$ for any $\beta > \frac{1-\alpha}{2}$. 

However, these Obukhov--Corrsin-type bounds are of a rather different form than the ones considered here in Theorem~\ref{thm:OC special}, Theorem~\ref{thm:correlated lower bound}, and Theorem~\ref{thm:white lower bound}. The most obvious refinement is obtaining essentially sharp bounds on (almost) geometric annuli in Fourier space, providing a more precise localization of the Fourier mass, but there are also other substantial differences. We are working with the equilibrium measure for the stochastically forced equation~\eqref{eq:kraichnan-forced} while the previous references are considering the free-decay problem~\eqref{eq:advection-diffusion-forced}. It is in this distinction that the importance of anomalous regularization becomes clear. In the free-decay problem, one starts with smooth initial data and seeks to show that the solution to~\eqref{eq:advection-diffusion-free} doesn't become too singular in finite time. However, this proof technique straightforwardly fails in trying to prove the regularity of the equilibrium measure. From~\eqref{eq:equilibrium equals free decay}, in order to have positive regularity of the equilibrium measure, we need not just that the free decay problem doesn't become too large in a positive regularity norm, we actually need positive regularity norms to decay in a time integrable way, uniformly in diffusivity.

The only way we know how to get uniform-in-diffusivity decay of the solution is through anomalous dissipation, which gives decay in a zero-regularity norm. We then can use anomalous regularization, which bounds positive regularity norms by zero-regularity norms, to upgrade this decay to decay of positive regularity norms, giving the desired bound of a positive regularity norm in equilibrium. This suggests that to get Obukhov--Corrsin-type bounds in equilibrium, one typically needs the stronger phenomenon of anomalous regularization as opposed to the free-decay case, for which one only needs that the solutions don't become too rough.

We note additionally that the previously constructed deterministic examples exhibiting anomalous dissipation that have Obukhov--Corrsin-type regularity bounds are not ``time-uniform''. They all require one to start the advection-diffusion equation at specific times in order to have the correct behavior; this prevents the constructions from fitting into the framework considered here.

\subsection{The Kraichnan model/transport noise}
\label{sss:kraichnan}

The anomalous dissipation examples discussed above were all for the advection-diffusion equation~\eqref{eq:advection-diffusion-free}, in which the velocity field is at least bounded in time. The case of a stochastic advecting flow~\eqref{eq:kraichnan-free} also has a long history, originating in the physics literature~\cite{kraichnanSmallScaleStructure1968} as the ``Kraichnan model'' of a turbulent velocity field. Following its initial introduction, it has been widely utilized as a testing ground for ideas such as intermittency~\cite{gawedzki_anomalous_1995} and spontaneous stochasticity~\cite{bernard_slow_1998}. The model also saw mathematical treatment in~\cite{jan_integration_2002,jan_flows_2004,lototskii_passive_2004,lototsky_wiener_2006}.

We note that the presence of a multiplicative white (in time) noise in the equation~\eqref{eq:kraichnan-free} necessitates the choice of a stochastic integration convention, the two most common being It\^o and Stratonovich. In this setting though, the Stratonovich convention is certainly the right choice. The transport noise model should be thought of as the limiting case of a very fast fluctuating random velocity field. By the Wong-Zakai theorem~\cite{wongConvergenceOrdinaryIntegrals1965,wongRiemannStieltjesApproximationsStochastic1969}, this limiting procedure gives rise to a Stratonovich noise. See also~\cite{flandoli_additive_2022} for another perspective on the appearance of Stratonovich transport noise in fluid models.

More recently, there has been renewed mathematical interest in rough transport noise and the Kraichnan model~\cite{galeati_convergence_2020,flandoli_delayed_2021,flandoli_high_2021,coghi_existence_2024,rowan_anomalous_2024,galeati_anomalous_2024,drivas_anomalous_2025}. Of particular relevance for us here are~\cite{galeati_anomalous_2024,drivas_anomalous_2025}, but let us defer further discussion of these works to Section~\ref{sss:anomalous regularization}. In the author's previous work~\cite{rowan_anomalous_2024}, anomalous dissipation was proven for a wide variety of Kraichnan noises using techniques related to those of the current work (for further discussion, see Section~\ref{s:overview}).

\subsection{Anomalous regularization}
\label{sss:anomalous regularization}

Anomalous regularization, in which the passive scalar gains regularity over the initial data uniformly in diffusivity despite the zero-diffusivity equation being regularity preserving for spatially smooth flows (and generically causing growth of positive regularity norms), is a phenomenon that has only just started to be understood. While it is possible for $C^\alpha_x$ velocity fields to cause initial data to lose all Sobolev regularity~\cite{alberti_loss_2019}, anomalous regularization demonstrates that for certain fluid-like velocity fields, we actually see the opposite phenomenon: substantial regularity gain. The connection between anomalous regularization and the appearance of the statistical scaling laws like~\eqref{eq:kolmogorov 11 thirds} and~\eqref{eq:obukhov corrsin} has previously been discussed in~\cite{drivas_self-regularization_2022}.

Mathematical demonstrations of anomalous regularization are rather recent: following~\cite{coghi_existence_2024}, which used an anomalous regularization estimate to prove well-posedness for the 2D vorticity-form Euler equations with rough transport noise, anomalous regularization has been demonstrated for a variety of equations subject to transport noise~\cite{jiao_well-posedness_2025,bagnara_anomalous_2024,bagnara_regularization_2025}. Of particular relevance to the current work are~\cite{galeati_anomalous_2024} and~\cite{drivas_anomalous_2025}, which consider the same equation~\eqref{eq:kraichnan-free} studied here, but on $\R^d$ instead of $\T^d.$ We note that~\cite{drivas_anomalous_2025} also considers a variety of interesting cases where $\nabla \cdot du_t \ne 0$, which we do not cover at all.

The arguments of~\cite{galeati_anomalous_2024} and~\cite{drivas_anomalous_2025} do not directly transfer from $\R^d$ to $\T^d$. The behavior of equilibrium measures is much better on $\T^d$ due to its compactness and, as such, for estimates like the Obukhov--Corrsin spectrum we prefer to work on $\T^d$. The proof techniques of~\cite{galeati_anomalous_2024} and~\cite{drivas_anomalous_2025} exploit the rotational symmetry of $\R^d$ and use exact asymptotic expansions in order to prove the anomalous regularization. Since there is no (global) rotational symmetry on the torus, this proof method runs into difficulties on $\T^d$. We also note that due to the exact asymptotic expansions present in their arguments, they cannot handle the case where $du_t = du^1_t + du^2_t$ for independent noises where $du^1_t$ is well-behaved (satisfying their assumptions) and $du^2_t$ is some arbitrary noise (with enough regularity so that the equation is still well-posed). In contrast, due to the ``energy estimate'' structure of our proof, we can treat a very broad class of highly anisotropic noises and more noise is always ``helpful'': adding an additional negative term that could always be disregarded. This allows us to cover the case that $du_t = du^1_t + du^2_t$, as can be seen by inspecting the proof (we choose not to make statements along these lines so as not to further complicate the already quite technical theorem statements).

We note that~\cite[Theorem 1.3, (1.11)]{drivas_anomalous_2025} provides a sharp endpoint regularity bound in a Besov space with regularity exactly $1-\alpha$, strictly better than our result~\eqref{eq:anomalous regularization special} which is off the endpoint by some logarithmic correction. However, one can verify that even if we were to attain an anomalous regularization result in the same Besov space, there would still be logarithmic corrections present in the Obukhov--Corrsin bounds of Theorem~\ref{thm:OC special}. In this work, we generally won't be concerned with optimality of logarithmic errors; the sharp endpoint regularization for the Kraichnan model on the torus is left for future studies.

Finally, we note the recent work~\cite{hess-childs_turbulent_2025} gives an anomalous regularization result for a deterministic (and positive time regularity) flow. However, this result is both non-sharp---not getting close to the endpoint regularity of $\frac{1-\alpha}{2}$ for the scalar---and non-time-uniform---only giving anomalous regularization when started from specific times. As such, this construction doesn't satisfy the hypotheses of Theorem~\ref{thm:correlated lower bound}.  

\subsection{Batchelor regime passive scalars}
\label{sss:batchelor}

We now divert our discussion to a different regime of the advection-diffusion equation: the Batchelor regime. In the Batchelor regime, we take the advecting flow to be spatially smooth, heuristically corresponding to a fluid at a fixed positive viscosity. In the Batchelor regime, there is a different prediction for the scaling behavior of passive scalars at statistical equilibrium analogous to~\eqref{eq:obukhov corrsin}, known as Batchelor's law:
\begin{equation}
\label{eq:batchelor}
\E_\mu |\hat \theta(k)|^2 \approx |k|^{-d}.
\end{equation}
We note this formally corresponds to~\eqref{eq:obukhov corrsin} (or~\eqref{eq:obukhov corrsin kraichnan}) with $\alpha =1$, but this is the correct prediction even for velocity fields with $C^\infty_x$ regularity; the statistics of the passive scalar become independent of the regularity of the advecting flow for regularities above one. We note also that this prediction is now independent of whether one considers the advection-diffusion case~\eqref{eq:advection-diffusion-forced} or the transport noise case~\eqref{eq:kraichnan-forced}. 

In the Batchelor regime, the primary phenomena of interest (in place of anomalous dissipation and anomalous regularization) are (exponential) mixing~\cite{alberti_exponential_2019,elgindi_universal_2019,bedrossian_almost-sure_2022,myers_hill_exponential_2022,blumenthal_exponential_2023, luo_elementary_2024, navarro-fernandez_exponential_2025,cooperman_exponential_2025} and enhanced dissipation~\cite{feng_dissipation_2019,zelati_relation_2020,bedrossian_almost-sure_2021,cooperman_harris_2025,elgindi_optimal_2025}. In \cite{bedrossian_batchelor_2021}, it is shown how the competition of exponential mixing (which sends Fourier mass to infinity at an exponential rate) and the regularity of the advecting flow (which prevents Fourier mass from going to infinity faster than exponentially) necessarily leads to a cumulative version of the Batchelor spectrum~\eqref{eq:batchelor}, given by summing over balls in Fourier space:
\[\E \sum_{|k| \leq r} |\hat \theta(k)|^2 \approx \log r.\]
In~\cite{cooperman_fourier_2025}, a refined argument gives upper and lower bounds on annuli of constant width. The argument of this work giving Obukhov--Corrsin bounds on annuli in Fourier space draws inspiration from~\cite{cooperman_fourier_2025}, as is further discussed in Section~\ref{s:overview}.

\subsection{Uniform-in-diffusivity vs.\ at zero diffusivity}

The final topic we discuss is the issue of uniform-in-diffusivity estimates compared to estimates for the zero-diffusivity equation. The zero-diffusivity equation---in contrast to the positive diffusivity equation---is not straightforwardly well-posed, and in the case of correlated-in-time advection-diffusion equation~\eqref{eq:advection-diffusion-free} with a $C^\alpha_x$ advecting flow, it is generically ill-posed, admitting many solutions even under various selection principles~\cite{colombo_anomalous_2023}. However, for the white-in-time equation~\eqref{eq:kraichnan-free}, the zero-diffusivity equation is well-posed~\cite{jan_integration_2002,jan_flows_2004,drivas_anomalous_2025}, so one could seek to make statements directly about this zero-diffusivity equation. We choose not to pursue this path for a few reasons. Dealing with the positive diffusivity case is technically more straightforward as the well-posedness theory is standard and well-behaved in both the advection-diffusion case and the white-in-time case. One can hope to transform uniform-in-diffusivity results to the zero-diffusivity equation under some weak convergence results. Finally, the physics literature often phrases its investigation in terms of uniform in (small enough) diffusivity estimates, so this setting fits well within the literature.

\section{Overview of the argument}
\label{s:overview}

We now discuss the argument, which breaks into two independent main pieces: 
\begin{itemize}
    \item The proof of the general result of anomalous dissipation and regularization for white-in-time velocity fields given by Theorem~\ref{thm:kraichnan}: discussed in Section~\ref{s:anomalous overview} and given in Section~\ref{s:anomalous}, relying on a lattice inequality proved in Section~\ref{s:lattice}.
    \item The proof of the Obukhov--Corrsin bounds of Theorem~\ref{thm:correlated lower bound} and Theorem~\ref{thm:white lower bound} under the assumption of anomalous dissipation and regularization: discussed in Section~\ref{s:obukhov corrsin overview} and given in Section~\ref{s:lower bounds}.
\end{itemize}  

In addition to these two main steps, we need to conclude Theorem~\ref{thm:anomalous dissipation and regularization special} and Theorem~\ref{thm:OC special} as special cases of the Theorem~\ref{thm:kraichnan} and Theorem~\ref{thm:white lower bound} respectively, as well as provide the short arguments for Corollary~\ref{cor:infinite smoothing in the z direction}, Proposition~\ref{prop:invariant measures}, and Proposition~\ref{prop:OC upper bounds}. These straightforward arguments are provided at the end of this section and won't be further discussed.

\subsection{Overview of the proof of anomalous regularization and anomalous dissipation in white-in-time models}
\label{s:anomalous overview}

The proof of Theorem~\ref{thm:kraichnan} is strongly inspired by the argument of~\cite{luo_elementary_2024} as well as~\cite{rowan_anomalous_2024}. Let us fix $\alpha \in (0,1)$ in this discussion and consider the velocity field $du_t$ with Fourier coefficients defined by Definition~\ref{defn:Kraichnan standard}. We then let $\phi^\kappa_t$ solve~\eqref{eq:kraichnan-free}.

In~\cite{rowan_anomalous_2024}, we consider
\[
    g^\kappa_t(x) := \int \E \phi^\kappa_t(y) \phi^\kappa_t(y+x)\,dy,\]
    which satisfies the equation
\begin{equation}
\label{eq:g equation overview}
    \dot g^\kappa_t= 2 \kappa \Delta g + \nabla \cdot a \nabla g^\kappa_t
\end{equation}
for a matrix $a$ satisfying
\[v \cdot a(x) v \geq |x|^{2\alpha} |v|^2,\] 
hence~\eqref{eq:g equation overview} becomes a degenerate parabolic equation as $\kappa \to 0.$
We note we also consider $g^\kappa$ defined this way in Section~\ref{s:oc bounds white}. A direct computation then verifies that
\[\frac{d}{dt} \frac{1}{2} \|g^\kappa\|_{L^2}^2 \leq - \||x|^{\alpha} \nabla g^\kappa\|_{L^2}^2 \leq -C^{-1} \|g^\kappa\|_{H^{1-\alpha}}^2,\]
where for the second inequality we use weighted Sobolev inequality~\cite{caffarelliFirstOrderInterpolation1984}. Integrating in time, we thus get that
\begin{equation}
\label{eq:g inequality}
\int_0^1 \|g^\kappa_t\|_{H^{1-\alpha}}^2\,dt \leq \|g^\kappa_0\|_{L^2}^2.
\end{equation}
This looks a lot like our desired anomalous regularization estimate~\eqref{eq:anomalous regularization special}, except that the bound is on $g^\kappa_t$ instead of $\phi^\kappa_t$. This estimate is much weaker, as $g^\kappa$ is already integrated in $x$ compared to $\phi^\kappa$; in fact, $L^\infty$-type estimates on $g^\kappa$ are equivalent to $L^2$-type estimates on $\phi^\kappa$. Thus we want the inequality~\eqref{eq:g inequality} in $L^\infty_x$ instead of $L^2_x$. However, it is not particularly clear how to get an $L^\infty$-type smoothing estimate for the degenerate parabolic equation~\eqref{eq:g equation overview}, and the general theory of degenerate parabolic equations is not very well developed (see~\cite[Remark 1.4]{drivas_anomalous_2025} for a discussion of the relevant literature on degenerate PDE regularity theory).

Following the lead of~\cite{luo_elementary_2024}, we can look at the problem in Fourier space, which allows one to compute---using the Fourier transform of~\eqref{eq:g equation overview}---that for any $p \geq 1$
\begin{equation}
\label{eq:ell p inequality overview}
\frac{d}{dt} \sum_k a_k^p \leq -2\pi^2p \sum_{k,j \in \Z^d \backslash\{0\}} w_j^2  |\Pi_{j^\perp} k|^2(a_{k+j}^{p-1} - a_k^{p-1})(a_{k+j}- a_k),
\end{equation}
where $a_k(t) := \E |\hat \phi^\kappa_t(k)|^2.$ Taking $p \downarrow 1$ in Fourier space is analogous to taking $p \uparrow \infty$ in real space, thus if we can get a good ``weighted Poincar\'e-type'' inequality for all $p>1$ (in place of a real-space weighted Sobolev inequality), we can hope to get an (almost) $L^\infty(\T^d)$ version of~\eqref{eq:g inequality}. We note that we cannot work directly with $p=1$ as in that case the term on the right hand side of~\eqref{eq:ell p inequality overview} vanishes, which is the Fourier space version of the fact that the velocity field $du_t$ doesn't appear directly in the time derivative of $\|\phi_t\|_{L^2(\T^d)}^2$. This formulation of the problem has the advantage of utilizing additional structure of the problem, since the appearance of positive weight $w_j^2$ on the right hand side is related to the fact that covariance functions are positive definite, hence have positive Fourier transforms. In the real space formulation of the problem~\eqref{eq:g equation overview}, we have that $a(x)$ has a negative Fourier transform, but it is not clear how to use this information.

The requisite Poincar\'e inequality is provided by the following proposition, which is a direct corollary of Lemma~\ref{lem:ell p type inequality with explicit constants} proved in Section~\ref{s:lattice}.

\begin{proposition}
\label{prop:ell p inequality clean}
        Let $\alpha \in (0,1)$, $(w_k)_{k \in \Z^d \backslash\{0\}}$ satisfying Assumption~\ref{asmp:wk} for $\alpha$, and $(a_k)_{k \in \Z^d \backslash \{0\}}$ such that $a_k \geq 0 $ and 
        \begin{equation*}
        \sum_{k \in \Z^d \backslash\{0\}} |k|^{2(1-\alpha)} a_k^p <\infty.
    \end{equation*}
    Then for all $R \geq r_0$, there exists $C(\delta, R, \Psi)>0$ so that for $S$ defined by~\eqref{eq:S-def}, for all $p \in (1,2],$ we have the bound
    \[\sum_{k \in \Z^d \backslash \{0\}} S(R |k|)^2 a_k^p \leq \frac{C}{p-1} \sum_{k,j \in \Z^d \backslash\{0\}} w_j^2  |\Pi_{j^\perp} k|^2(a_{k+j}^{p-1} - a_k^{p-1})(a_{k+j}- a_k).\]
\end{proposition}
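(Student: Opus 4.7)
The inequality has the form of a weighted $\ell^p$ Poincar\'e/Hardy inequality on the lattice $\Z^d\backslash\{0\}$, with Dirichlet form associated to the reversible jump chain $k\mapsto k+j$ having symmetric rates $q(k,k+j):=w_j^2|\Pi_{j^\perp}k|^2$. The symmetry uses $\Pi_{j^\perp}j=0$, which yields $|\Pi_{j^\perp}k|=|\Pi_{j^\perp}(k+j)|$, together with $w_{-j}=w_j$. The prefactor $(p-1)^{-1}$ is the hallmark of the standard carr\'e du champ reduction, so my plan splits into two stages.

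\emph{Stage 1: reduction to $p=2$.} Apply termwise the elementary pointwise inequality
\[
(x^{p-1}-y^{p-1})(x-y)\ \ge\ \tfrac{4(p-1)}{p^2}\,(x^{p/2}-y^{p/2})^2,\qquad x,y\ge 0,\ p\in(1,2],
\]
to each pair $(a_{k+j},a_k)$. Setting $b_k:=a_k^{p/2}\ge 0$ gives $a_k^p=b_k^2$, and since $\tfrac{p^2}{4(p-1)}\le(p-1)^{-1}$ on $p\in(1,2]$, the proposition reduces to the $\ell^2$ weighted Poincar\'e estimate
\[
\sum_{k\in\Z^d\backslash\{0\}}S(R|k|)^2\,b_k^2\ \le\ C\sum_{k,j\in\Z^d\backslash\{0\}}w_j^2|\Pi_{j^\perp}k|^2(b_{k+j}-b_k)^2
\]
for all suitably decaying nonnegative $(b_k)$; finiteness of the left-hand side is guaranteed by the hypothesis $\sum|k|^{2(1-\alpha)}a_k^p<\infty$ combined with the upper bound $S(r)\le r^{1-\alpha}$ that follows from~\eqref{eq:C alpha condition} by Cauchy--Schwarz.

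\emph{Stage 2: weighted lattice Poincar\'e inequality.} This is the substantive content of Lemma~\ref{lem:ell p type inequality with explicit constants}. I would prove it pointwise in $k$ by a path/telescoping argument: fix $k\ne 0$ and a family of finite paths $k=k^{(0)}\to k^{(1)}\to\cdots\to k^{(N)}$ with $k^{(i+1)}=k^{(i)}+j_i$, with jumps $j_i$ chosen across dyadic scales $|j|\in[2^\ell,2^{\ell+1}]$ up to $|j|\lesssim R|k|$. Writing $b_k=\sum_i(b_{k^{(i)}}-b_{k^{(i+1)}})+b_{k^{(N)}}$ and applying Cauchy--Schwarz against the weights $w_{j_i}^2|\Pi_{j_i^\perp}k^{(i)}|^2$ yields
\[
b_k^2\ \le\ 2\Bigl(\sum_i \tfrac{1}{w_{j_i}^2|\Pi_{j_i^\perp}k^{(i)}|^2}\Bigr)\sum_i w_{j_i}^2|\Pi_{j_i^\perp}k^{(i)}|^2(b_{k^{(i+1)}}-b_{k^{(i)}})^2+2b_{k^{(N)}}^2.
\]
Averaging over a probability measure on paths chosen so that the induced edge occupation is proportional to $w_j^2|\Pi_{j^\perp}k|^2$ converts the inner sum, upon summing over $k$, into the full Dirichlet form on the right, while the ``conductance'' prefactor must be shown to be at most $O(S(R|k|)^{-2})$. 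This is where nondegeneracy~\eqref{eq:nondegenerate}, applied at each dyadic scale to $v=k/|k|$, ensures enough transverse angular mass, and the growth~\eqref{eq:S not too small} together with the geometric fluctuation bound~\eqref{eq:geometric fluctuation bound} permit the contributions from successive scales to be summed stably; the boundary term $b_{k^{(N)}}^2$ is absorbed into a small fraction of the left-hand side by choosing $R$ large enough, using the growth of $S$.

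\emph{Main obstacle.} The central difficulty is the anisotropic degeneration of the weight $|\Pi_{j^\perp}k|^2$, which vanishes whenever $j$ is parallel to $k$, so that individual jumps along $k$'s own direction are invisible to the Dirichlet form. The hypothesis~\eqref{eq:nondegenerate} remedies this \emph{on average}, but it provides only a \emph{linear} lower bound $\sum_{|j|\le r}|j|^{1+\alpha}w_j^2|\Pi_{j^\perp}v|\ge\delta S(r)$, not a quadratic one. Matching this linear nondegeneracy against the quadratic weight in the Dirichlet form so as to recover exactly $S(R|k|)^2$ on the left---rather than a strictly weaker weight---will require a careful additional Cauchy--Schwarz that reconstructs the missing factor $|\Pi_{j^\perp}k|$ from the jump measure itself, with the exponent $|j|^{1+\alpha}$ built into $S$ and the $C^\alpha$-type constraint~\eqref{eq:C alpha condition} providing the compensating scale factors needed to close the estimate at the sharp exponent.
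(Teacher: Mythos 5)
Your Stage 1 is correct: the Stroock--Varopoulos inequality $(x^{p-1}-y^{p-1})(x-y)\ge\tfrac{4(p-1)}{p^2}(x^{p/2}-y^{p/2})^2$, together with $p^2/4\le 1$ for $p\in(1,2]$, validly reduces the proposition to its $p=2$ case with $b_k=a_k^{p/2}$, and this is essentially the same device as the paper's elementary inequality~\eqref{eq:bound on p difference}; both trade the $p$-Dirichlet form for a quadratic one at the cost of the factor $(p-1)^{-1}$. The genuine gap is in Stage 2, which is where all of the content lies, and the plan you sketch there would fail as written.

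The obstruction is a mass mismatch between the coercive part and the boundary part of any telescoping argument performed at the quadratic level. To make the Dirichlet-form term come out with weight $w_j^2|\Pi_{j^\perp}k|^2$, your path measure is forced to charge direction $j$ with weight roughly $\mu_j(k)=w_j^2|j|^2|\Pi_{j^\perp}k|^2/|k|^2$: Cauchy--Schwarz along a path of $N\approx|k|/|j|$ steps costs a factor $N$, each edge is shared by $\approx N$ starting points, and $\mu_j(k)\,N^2\approx w_j^2|\Pi_{j^\perp}k|^2$ is exactly what cancels this. With that choice, the coercivity can indeed be rescued by the ``additional Cauchy--Schwarz'' you allude to: by~\eqref{eq:nondegenerate} and~\eqref{eq:C alpha condition}, $\delta S(r)\le\sum_{|j|\le r}|j|^{1+\alpha}w_j^2\tfrac{|\Pi_{j^\perp}k|}{|k|}\le\big(\sum_j|j|^{2\alpha}w_j^2\big)^{1/2}\big(\sum_{|j|\le r}\mu_j(k)\big)^{1/2}$, so $\sum_j\mu_j(k)\ge\delta^2S(r)^2$. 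But the endpoint term $b_{k^{(N)}}^2$ is weighted by the \emph{total} mass $\sum_{|j|\le r}w_j^2|j|^2\le r^{1-\alpha}S(r)$ (by~\eqref{eq:S bar bound}), which is in general much larger than the coercive mass $\delta^2 S(r)^2$---strictly so whenever $S(r)\ll r^{1-\alpha}$, e.g.\ when only~\eqref{eq:S not too small} with $\beta<1-\alpha$ is available. Reabsorbing the endpoint term into the left-hand side at the next geometric scale would then require the coercive mass to grow by a large constant factor between consecutive scales, which~\eqref{eq:S not too small} and~\eqref{eq:geometric fluctuation bound} do not provide (they bound growth from above, not below); one can check the reabsorption fails even for the isotropic model. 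Conversely, if you weight paths only linearly in $|\Pi_{j^\perp}k|$ so that the endpoint mass matches $S^2$, the path-length factor $N$ can no longer be paid. This catch-22 is precisely why the paper never proves the $\ell^2$ inequality by canonical paths.

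The missing idea is a restructuring of where the squaring happens. The paper telescopes at the $\ell^1$ level (Lemma~\ref{lem:ell 1 type inequality}, applied to the nonnegative sequence $a_k^p$), where the triangle inequality costs nothing, no reciprocal conductances appear, and the right-hand side carries only a \emph{single} power of $|\Pi_{j^\perp}k|$ and a single power of $S$---so the linear nondegeneracy~\eqref{eq:nondegenerate} applies directly and the endpoint mass equals the coercive mass up to $\delta$. The quadratic Dirichlet form is then produced afterwards by one global Cauchy--Schwarz over $(k,j)$ (Lemma~\ref{lem:ell p type inequality with explicit constants}), whose companion factor carries the weight $|j|^{2\alpha}w_j^2$---summable to $1$ by~\eqref{eq:C alpha condition}---and is reabsorbed into the left-hand side using~\eqref{eq:geometric fluctuation bound}. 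Without this restructuring, your Stage 2 does not close, and with it your Stage 1 becomes redundant, since the same global Cauchy--Schwarz handles all $p\in(1,2]$ at once.
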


The proof of this inequality is further discussed in Section~\ref{s:lattice}. Combining Proposition~\ref{prop:ell p inequality clean} with~\eqref{eq:ell p inequality overview} and integrating in time, we get that for $p>1$
\[\int_0^1 \sum_{k \in \Z^d \backslash \{0\}} S(R |k|)^2 a_k^p \,dt \leq \frac{C}{p-1} \|a_k(0)\|_{\ell^1}^p = \frac{C}{p-1} \|\phi^\kappa_0\|_{L^2}^{2p}.\]
If we had the above inequality with $p =1$ (and a finite constant on the right hand side), by recalling the definition of $a_k(t)$, we could directly conclude the smoothing estimate of Theorem~\ref{thm:kraichnan} (or an even stronger version, without the logarithmic corrections). However, it turns out that by a careful application of H\"older's inequality with appropriately chosen exponents on doubly exponential annuli, the above inequalities for $p>1$ actually imply the desired smoothing estimate for $p=1$, after introducing additional logarithmic weights.

We have thus covered the smoothing estimate aspect of Theorem~\ref{thm:kraichnan}, but we also need the anomalous dissipation aspect. That is now straightforward, since using that $S(R|k|) \geq S(R) \geq 1$, we can combine Proposition~\ref{prop:ell p inequality clean} and~\eqref{eq:ell p inequality overview} to give that for all $p >1$
\[\frac{d}{dt} \sum_k a_k^p \leq - C^{-1} (p-1)  \sum_k a_k^p.\]
Applying then Gr\"onwall's inequality, we have exponential decay of all $\ell^p$ norms of $a_k$ for $p>1$. Again we want the decay for $p=1$, but this then follows using the regularization estimate, H\"older's inequality, and the lower bound on $S$ given by~\eqref{eq:S not too small} in order to bound the $\ell^1$ norm by the $\ell^p$ norm for some $p>1$, thus concluding the proof of Theorem~\ref{thm:kraichnan}.

\subsection{Overview of the proof of the Obukhov--Corrsin lower bounds}
\label{s:obukhov corrsin overview}

Let us only discuss here the proof of Theorem~\ref{thm:correlated lower bound}. The proof of Theorem~\ref{thm:white lower bound} follows similarly, working the equation for $g^\kappa$~\eqref{eq:g equation overview} for the commutator bounds instead of directly with~\eqref{eq:advection-diffusion-free}. 

The proof of Theorem~\ref{thm:correlated lower bound} is inspired by the proof of the Batchelor spectrum bounds~\cite{bedrossian_batchelor_2021}, the refinement of this argument in~\cite{cooperman_fourier_2025}, and the commutator argument of~\cite{constantin_onsagers_1994} (appearing in the advection-diffusion setting first in~\cite{drivas_anomalous_2022}). The idea is to perform the commutator bound of~\cite{constantin_onsagers_1994} on $\phi^\kappa_t$ solving~\eqref{eq:advection-diffusion-free}, which yields for all $r \in (0,1)$,
\[ {-}\frac{d}{dt} \|\eta_r* \phi^\kappa_t\|_{L^2}^2 \leq C \big(\kappa r^{-2}  + \|v_t\|_{C^\alpha_x} r^{-(1-\alpha)} \big)  \frac{1}{|B_r|} \int_{B_r} \big\|\phi_t^\kappa(x-y) - \phi_t^\kappa(x)\big\|_{L^2_x}^2\,dy.
\]
where $\eta_r$ is a standard family of mollifiers. Using that $\phi^\kappa_t \to 0$ as $t \to \infty$, we can integrate the left hand side over $[0,\infty)$ to get an $O(1)$ constant. Restricting the range of $r$ so that $\kappa r^{-2} \leq r^{-(1-\alpha)}$ and bounding the right hand side in Fourier space, we then get
\[ Cr^{1-\alpha}  \leq  \sum_{k \in \Z^d \backslash\{0\}} \min(|k|^2 r^2, 1)  \E_{\mu^\kappa} | \hat \theta^\kappa(k)|^2,\]
where we Proposition~\ref{prop:invariant measures} to relate time integrals of $\phi^\kappa_t$ to expectations over the invariant measure $\mu^\kappa$ of~\eqref{eq:advection-diffusion-forced}. We then note that this would give us exactly the desired lower bound on the energy spectrum if we could show that the dominant contribution to the sum on the right hand side was given by a geometrically annulus centered at $r^{-1}, \{ C^{-1} r^{-1} \leq |k| \leq C r^{-1}\}.$

This same problem, of having to localize the contribution of the sum in Fourier space, appeared in~\cite{cooperman_fourier_2025}. The idea there is to utilize any available the upper bounds on $\E_{\mu^\kappa} |\hat \theta^\kappa(k)|^2$---in that case provided by exponential mixing, here provided by the regularity of the invariant measure given by Proposition~\ref{prop:OC upper bounds}---to localize the sum. To that end, we split the sum into sums over $|k| <a, a \leq |k| \leq b$, and $|k| >b$ for $1 \leq a < r^{-1} < b <\infty$. We then use upper bounds provided by Proposition~\ref{prop:OC upper bounds} to control the contributions of the $|k| <a$ and $|k| >b$ parts, then choosing $a$ small enough and $b$ large enough to reabsorb these errors on the left hand side. 

The result is a lower bound on the mass on the annulus $\{a \leq |k| \leq b\}$. Ideally, we want $a = C^{-1} r^{-1}, b = C r^{-1}$. However, this would only be attainable if we had regularity estimates on the invariant measure all the way up to the endpoint of $H^{\frac{1-\alpha}{2}}_x$, without any logarithmic corrections. Due to the logarithmic corrections even in the best case, $a$ is logarithmically in $r^{-1}$ smaller than $C^{-1} r^{-1}$ and $b$ logarithmically bigger than $C r^{-1}.$ Thus we only get the bounds on (at best) slightly super-geometrically large annuli in Fourier space. In general, if we have even worse smoothing estimates, algebraically below $H^{\frac{1-\alpha}{2}}_x$---that is only $H^{\beta}_x$ for some $\beta < \frac{1-\alpha}{2}$---we have even larger annuli and our estimates are substantially worse.

\subsection{Proofs of Propositions~\ref{prop:invariant measures} and~\ref{prop:OC upper bounds}}

\begin{proof}[Proof of Proposition~\ref{prop:invariant measures}]
    We consider only the case of~\eqref{eq:advection-diffusion-forced}; the case of~\eqref{eq:kraichnan-forced} follows even more straightforwardly as in that case we don't need to track $(v_{s+t})_{s \in \R}$. We fix $\kappa>0$ throughout this argument. Let $\theta^\kappa_t$ solve~\eqref{eq:advection-diffusion-forced} with arbitrary initial data $\theta_0 \in L^2(\T^d)$ with $\int \theta_0(x)\,dx =0$. For some trajectory $(v_r)_{r \in \R} \in C^0(\R \times \T^d)$ and $-\infty < s \leq t <\infty$, let $\sol_{s,t}^{v}$ be the solution operator to the unforced advection-diffusion equation~\eqref{eq:advection-diffusion-free} from time $s$ to time $t$. Then by Duhamel's principle, we have that for any $t>0$
    \[\theta^\kappa_t = \sol_{0,t}^v \theta_0 + \int_0^t \sol^v_{s,t} F\, dW_s=  \sol_{-t,0}^{(v_{s+t})_{s \in \R}} \theta_0 + \int_0^t \sol^{(v_{s+t})_{s \in \R}}_{-(t-s),0} F\, dW_s.\]
    We then note that by the standard energy estimate for an advection-diffusion equation and the Poincar\'e inequality, $\| \sol_{-t,0}^{(v_{s+t})_{s \in \R}} \theta_0\| \leq e^{-C^{-1} \kappa t} \|\theta_0\|_{L^2}.$ We also have by the time-stationarity in law of $v$,
    \[\int_0^t \sol^{(v_{s+t})_{s \in \R}}_{-(t-s),0} F\, dW_s \stackrel{d}{=} \int_0^t \sol^{v}_{-s,0} F\, dW_s,\]
    where $\stackrel{d}{=}$ means equality in distribution (or law). Thus sending $t \to \infty$, we have that, in law,
    \[\lim_{t \to \infty} \theta^\kappa_t \stackrel{d}{=} \int_0^\infty \sol^{v}_{-s,0} F\, dW_s,\]
    where the integral on the right hand side makes sense again using the basic decay estimate of the advection-diffusion equation equation to give that surely, $\|\sol^{v}_{-s,0} F\|_{L^2} \leq e^{- C^{-1} \kappa t} \|F\|_{L^2}.$ 

    This above argument essentially immediately implies that the random variable
    \[\big(v, \int_0^\infty \sol^v_{-s,0} F\,dW_s\big)\]
    has a law $\mu^\kappa$ that is invariant for the Markov process $((v_{s+t})_{s \in \R}, \theta^\kappa_t)$ for $\theta^\kappa_t$ solving~\eqref{eq:advection-diffusion-forced}, and further that it is the unique invariant measure for the process also satisfying $\mu^\kappa(dv, L^2(\T^d)) = \nu(dv)$ (since the dependency on the data $\theta_0$ vanishes as $t \to \infty$). An application of It\^o's isometry immediately implies~\eqref{eq:equilibrium equals free decay}, allowing us to conclude.
\end{proof}

\begin{proof}[Proof of Proposition~\ref{prop:OC upper bounds}]
    By Proposition~\ref{prop:invariant measures}, we have that
    \begin{equation}
    \label{eq:invariant display upperbound}
    \E_{\mu^\kappa} \|\theta^\kappa\|^2_{\sigma(H)} = \int_0^\infty \E \|\phi^\kappa_t\|_{\sigma(H)}^2\,dt.
    \end{equation}
    By the definition of anomalous dissipation and anomalous regularization, we have for any $n \in \N$, 
    \[\int_n^{n+1} \|\phi^\kappa_t\|_{\sigma(H)}^2\,dt \leq C\|\phi^\kappa_n\|_{L^2}^2 \leq C e^{-C^{-1} n} \|F\|_{L^2(\T^d)}^2.\]
    Summing this inequality over $n \in \N$ and using~\eqref{eq:invariant display upperbound}, we conclude.
\end{proof}

\subsection{Proofs of Theorem~\ref{thm:anomalous dissipation and regularization special}, Theorem~\ref{thm:OC special}, and Corollary~\ref{cor:infinite smoothing in the z direction}}
\label{ss:theorems to theorems}

\begin{proof}[Proof of Theorem~\ref{thm:anomalous dissipation and regularization special}]
    We want to apply Theorem~\ref{thm:kraichnan}. As such, we need to compute $S(r)$ associated to the $(w_k)_{k \in \Z^d \backslash\{0\}}$ as defined by Definition~\ref{defn:Kraichnan standard} and Definition~\ref{defn:Kraichnan shear} and verify Assumption~\ref{asmp:wk}. We note first that~\eqref{eq:C alpha condition} is satisfied by construction. 

    First for the case of Definition~\ref{defn:Kraichnan standard}, we compute that for $r \geq r_0(\alpha)$
    \begin{align*} S(r) &= Z^{-1}\sum_{1 \leq |j| \leq r} |j|^{1+\alpha} \frac{|j|^{-d - 2\alpha}}{(\log |j| +1)^2} 
    \\&\approx \int_{r_0/2}^r \frac{s^{-\alpha}}{(\log s)^2}\,ds
    \\&\approx \int_{r_0/2}^r (1- \alpha) \frac{s^{1-\alpha}}{(\log s)^2} -2 \frac{s^{1-\alpha}}{(\log s)^3}\,ds
    \\&=  \frac{r^{1-\alpha}}{(\log r)^2} -  \frac{(r_0/2)^{1-\alpha}}{(\log(r_0/2))^2}
    \\&\approx   \frac{r^{1-\alpha}}{(\log r)^2},
    \end{align*}
    where by $A \approx B$ we mean that there exists $C(d,\alpha)>0$ such that  $C^{-1} A \leq B \leq CA$. Similarly, for the case of Definition~\ref{defn:Kraichnan shear}, we compute for $r \geq r_0(\alpha)$:
    \[
        S(r) = 4 Z^{-1} \sum_{j=1}^r j^{1+\alpha} \frac{j^{-1 - 2\alpha}}{(\log j +1)^2} \approx \int_{r_0/2}^r \frac{s^{-\alpha}}{(\log s)^2}\,ds\approx   \frac{r^{1-\alpha}}{(\log r)^2}.\]
    Condition~\eqref{eq:S not too small} and~\eqref{eq:geometric fluctuation bound} of Assumption~\ref{asmp:wk} follow then directly for both cases of Definition~\ref{defn:Kraichnan standard} and Definition~\ref{defn:Kraichnan shear}. The final condition~\eqref{eq:nondegenerate} also follows from the straightforward computation that for each case there exists $\delta>0$ such that for any $v \in \R^d$ and any $r \in (0,\infty)$,
    \[\sum_{|j| =r} w_j^2 |\Pi_{j^\perp} v| \geq \delta \sum_{|j| =r} w_j^2.\]
    Thus we can apply Theorem~\ref{thm:kraichnan} which directly gives the result, after using our bounds on $S(r)$. 
\end{proof}

Theorem~\ref{thm:OC special} is a direct consequence of Theorem~\ref{thm:anomalous dissipation and regularization special} and Theorem~\ref{thm:white lower bound} with $\beta = 1- \alpha$ and $m=2$.

\begin{proof}[Proof of Corollary~\ref{cor:infinite smoothing in the z direction}]
    Fix $3 \leq j \leq d$ and let $|\partial_j| := (-\partial_j \partial_j)^{1/2}$, where throughout we are using the functional calculus of self-adjoint operators on $L^2(\T^d)$. Then, since $du_t$ is invariant under $x_j$ translations, we have that for any $s>0$, $|\partial_j|^s \phi^\kappa_t$ solves~\eqref{eq:kraichnan-free}. Thus by~\eqref{eq:anomalous dissipation special} there exists $C>0$ such that for any $t \geq 0$
    \[\E \||\partial_j|^s\phi^\kappa_t\|_{L^2}^2 \leq C\||\partial_j|^s \phi^\kappa_0\|_{L^2}^2 \]
    and by~\eqref{eq:anomalous regularization special}, 
    \[\E \int_0^1 \||\partial_j|^{s + \frac{1-\alpha}{2}} |\hat \phi^\kappa_t(k)|^2\,dt \leq C\E \int_0^1 \frac{|k|^{2(1-\alpha)}}{(\log |k| + 1)^4} \big| \widehat{|\partial_j|^s \phi^\kappa_t}(k)\big|^2\,dt \leq C \||\partial_j|^s \phi^\kappa_0\|_{L^2}^2.\]
    Combining the above two displays, we get that for all $t>0$
    \[\E \||\partial_j|^{s+ \frac{1-\alpha}{2}}\phi^\kappa_t\|_{L^2}^2 \leq C t^{-1}\||\partial_j|^s \phi^\kappa_0\|_{L^2}^2.\]
    Iterating this inequality, we conclude the result.
\end{proof}

\section{Anomalous regularization and dissipation for white-in-time models}
\label{s:anomalous}

We fix $\alpha \in (0,1)$ and Fourier coefficients $w_k$ for $du_t$ as given in~\eqref{eq:dut def} such that $w_k$ satisfy Assumption~\ref{asmp:wk} for $\alpha$.

The following computations for Proposition~\ref{prop:ak equation} and Proposition~\ref{prop:ell p energy inequality} essentially appear in~\cite[Sections 2-3]{luo_elementary_2024}; we repeat them here for the reader's convenience.

\begin{proposition}
\label{prop:ak equation}
    Let $\kappa >0, F \in L^2(\T^d)$ with $\int F(x)\,dx =0$, and $\phi^\kappa_t$ solve~\eqref{eq:kraichnan-free}. The for $(a_k)_{k \in \Z^d \backslash \{0\}}$ with $a_k \geq 0$ defined by 
    \[a_k(t) := \E |\hat \phi^\kappa_t(k)|^2,\]
    the $a_k$ solve the equation
    \begin{equation}
    \label{eq:ak equation}
    \dot a_k = - 8 \pi^2 \kappa |k|^2 a_k -4\pi^2 \sum_j w_j^2 |\Pi_{j^\perp} k|^2 \big(a_k -a_{k-j}\big).
    \end{equation}
\end{proposition}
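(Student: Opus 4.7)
The plan is to convert the Stratonovich SPDE~\eqref{eq:kraichnan-free} into It\^o form, take the Fourier transform, and then apply It\^o's formula to $|\hat\phi^\kappa_t(k)|^2 = \hat\phi^\kappa_t(k)\hat\phi^\kappa_t(-k)$. For the Stratonovich-to-It\^o conversion, I compute the cross variation between the noise $du_t$ and the martingale part of $\nabla\phi^\kappa$. Using the spatial covariance matrix
\[Q^{ab}(x-y) = \sum_{k\in\Z^d\setminus\{0\}} w_k^2 \sum_{j=1}^{d-1} (\mathrm{e}_{k,j})^a(\mathrm{e}_{k,j})^b e^{2\pi i k\cdot(x-y)},\]
together with the divergence-free condition $k\cdot\mathrm{e}_{k,j}=0$ (which kills the boundary term in $\partial_b(\sigma^b_\alpha\partial_a\phi)$), the conversion produces the It\^o equation
\[d\phi^\kappa = \big(\kappa\Delta\phi^\kappa + \tfrac{1}{2}Q^{ab}(0)\partial_a\partial_b\phi^\kappa\big)\,dt - du_t\cdot\nabla\phi^\kappa,\]
with $Q^{ab}(0)=\sum_{\ell}w_\ell^2(P_{\ell^\perp})^{ab}$.

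Next I pass to Fourier space. The diffusive drift contributes $-4\pi^2\kappa|k|^2\hat\phi(k)$ and the It\^o correction contributes $-2\pi^2\sum_\ell w_\ell^2|\Pi_{\ell^\perp}k|^2\hat\phi(k)$, while the transport term becomes the martingale
\[M_t(k):=-\sum_{\ell,j} 2\pi i\, w_\ell\,(k\cdot\mathrm{e}_{\ell,j})\,\hat\phi^\kappa_t(k-\ell)\,dW^{\ell,j}_t,\]
using $(k-\ell)\cdot\mathrm{e}_{\ell,j}=k\cdot\mathrm{e}_{\ell,j}$. Applying It\^o's formula to the real-valued product $\hat\phi(k)\hat\phi(-k)$ and taking expectations kills the martingale parts, and the drifts (which have the same coefficient at $k$ and $-k$) combine to give $(-8\pi^2\kappa|k|^2 - 4\pi^2\sum_\ell w_\ell^2|\Pi_{\ell^\perp}k|^2)\,a_k$.

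The remaining task is to compute the quadratic covariation $d[\hat\phi(k),\hat\phi(-k)]_t$. The key bookkeeping is the pairing rule $d[W^{\ell,j},W^{\ell',j'}]_t=\delta_{(\ell',j')=(-\ell,j)}\,dt$ arising from $W^{-\ell,j}=\overline{W^{\ell,j}}$, which selects exactly the diagonal term $\ell'=-\ell$. After using $w_{-\ell}=w_\ell$ and $\sum_j(k\cdot\mathrm{e}_{\ell,j})^2=|\Pi_{\ell^\perp}k|^2$, the covariation contributes $+4\pi^2\sum_\ell w_\ell^2|\Pi_{\ell^\perp}k|^2 a_{k-\ell}\,dt$. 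Combining this with the drift terms and relabeling the summation index yields exactly~\eqref{eq:ak equation}. The main technical obstacle is the pairing bookkeeping in the previous display: one needs to carefully track that the complex Brownian motions are not mutually independent but paired via complex conjugation, so that the cross variation selects $\ell'=-\ell$ and recombines $|\hat\phi(k-\ell)|^2$ as a real expectation $a_{k-\ell}$; convergence of the infinite sum is justified \emph{a posteriori} from $F\in L^2$ via $\sum_k a_k(t)\le\|F\|_{L^2}^2$, together with~\eqref{eq:dut-regularity}.
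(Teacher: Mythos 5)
Your proposal is correct: the Stratonovich-to-It\^o conversion, the Fourier-space martingale, and the covariation bookkeeping all check out, and the constants recombine exactly as claimed ($-8\pi^2\kappa|k|^2 a_k$ from the doubled diffusive drift, $-4\pi^2\sum_j w_j^2|\Pi_{j^\perp}k|^2 a_k$ from the doubled It\^o correction, $+4\pi^2\sum_j w_j^2|\Pi_{j^\perp}k|^2 a_{k-j}$ from the covariation), giving~\eqref{eq:ak equation}. The difference from the paper is one of organization rather than substance. The paper never writes the physical-space It\^o equation: it Fourier transforms the Stratonovich equation~\eqref{eq:kraichnan-free} directly, applies the ordinary (Stratonovich) product rule to $|\hat\phi^\kappa_t(k)|^2$, and evaluates the resulting expectations of Stratonovich integrals via the half-bracket identity $\E\big[X\circ dW^{j,n}_t\big]=\tfrac12\E\big[dX,dW^{j,n}_t\big]$, so that the loss term $a_k$ and the gain term $a_{k-j}$ emerge together from a single bracket computation. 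In your version the loss term instead comes from the It\^o drift correction $\tfrac12 Q^{ab}(0)\partial_a\partial_b$ and the gain term from the quadratic covariation in It\^o's product formula; this makes the corrected generator and its dissipative structure explicit, at the cost of first justifying the physical-space conversion (where you correctly invoke incompressibility to kill the terms involving derivatives of the covariance). Both arguments hinge on the identical facts---the pairing rule $[dW^{\ell,m}_t,dW^{j,n}_t]=\delta_{\ell,-j}\delta_{m,n}$ coming from $W^{-\ell,m}=\overline{W^{\ell,m}}$, the symmetries $w_{-j}=w_j$ and $\mathrm{e}_{-j,n}=\mathrm{e}_{j,n}$, and $j\cdot\mathrm{e}_{j,n}=0$---so neither is more general; your closing remark on absolute convergence of the sums (via $a_k\le\|F\|_{L^2}^2$ and~\eqref{eq:dut-regularity}) addresses a point the paper only deals with later, before its $\ell^p$ energy identity, and is a welcome addition at this stage.
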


\begin{proof}
    Taking the Fourier transform of~\eqref{eq:kraichnan-free}, we have that
    \[\dot{\hat \phi^\kappa_t}(k) = - 4\pi^2 \kappa |k|^2 \hat \phi^\kappa_t(k)  - 2\pi i k \cdot \sum_j  w_j  \hat \phi^\kappa_t(k-j)  \sum_{n=1}^{d-1}v_{j,n} \circ dW_t^{j,n}.\]
    Thus 
    \begin{align}
    \frac{d}{dt} \E | \hat \phi^\kappa_t(k)|^2 &= - 8 \pi^2 \kappa |k|^2 \E | \hat \phi^\kappa_t(k)|^2
 \notag \\&\qquad- 2\pi i k \cdot \sum_j  \sum_{n=1}^{d-1}v_{j,n}  w_j \E \hat \phi^\kappa_t(k-j) \overline{\hat \phi^\kappa_t(k)}  \circ dW_t^{j,n} 
 \notag \\&\qquad+ 2\pi i k \cdot \sum_j  \sum_{n=1}^{d-1}v_{j,n}  w_j \E \overline{\hat \phi^\kappa_t(k-j)} \hat \phi^\kappa_t(k)  \circ \overline{dW_t^{j,n}}.
        \label{eq:fourier time derivative}
    \end{align}
    We then note that 
    \begin{align*}
    &\E \hat \phi^\kappa_t(k-j) \overline{\hat \phi^\kappa_t(k)}  \circ dW_t^{j,n} = \frac{1}{2} \big[  d(\hat \phi^\kappa_t(k-j) \overline{\hat \phi^\kappa_t(k)}), dW^{j,n}_t\big]
    \\&\qquad=\E \pi i \hat \phi^\kappa_t(k-j) k \cdot \sum_\ell  w_\ell \overline{\hat \phi^\kappa_t}(k-\ell)  \sum_{m=1}^{d-1}v_{\ell,m} \big[ \overline{dW_t^{\ell,m}}, dW^{j,n}_t\big]
       \\&\qquad \qquad  -\E \pi i\overline{\hat \phi^\kappa_t(k)}  (k-j) \cdot \sum_\ell  w_\ell  \hat \phi^\kappa_t(k-j-\ell)  \sum_{m=1}^{d-1}v_{\ell,m}  \big[  dW_t^{\ell,m} , dW^{j,n}_t\big]
        \\&\qquad=\pi i\E  |\hat \phi^\kappa_t|^2(k-j)  w_j k \cdot  v_{j,n} - \pi i\E|\hat \phi^\kappa_t|^2(k)   w_{j}k \cdot  v_{j,n},
    \end{align*}
    where we use $w_j = w_{-j}, v_{j,n} = v_{-j,n}, [dW^{\ell,m}_t, dW^{j,n}_t] = \delta_{\ell,-j} \delta_{m,n}$, $[\overline{dW^{\ell,m}_t}, dW^{j,n}_t] = \delta_{\ell,j} \delta_{m,n},$ and $j \cdot v_{j,m}= 0.$
    Thus
    \begin{align*}
        &- 2\pi i k \cdot \sum_j  \sum_{n=1}^{d-1}v_{j,n}  w_j \E \Big(\hat \phi^\kappa_t(k-j) \overline{\hat \phi^\kappa_t(k)}  \circ dW_t^{j,n} - \overline{\hat \phi^\kappa_t(k-j)} \hat \phi^\kappa_t(k)  \circ \overline{dW_t^{j,n}}\Big)
        \\&\qquad= - 2\pi i k \cdot  \sum_j  \sum_{n=1}^{d-1}v_{j,n}  w_j \Big(\pi i\E  |\hat \phi^\kappa_t|^2(k-j)  w_j k \cdot  v_{j,n} - \pi i\E|\hat \phi^\kappa_t|^2(k)   w_{j}k \cdot  v_{j,n}
        \\&\qquad\qquad\qquad\qquad\qquad\qquad\quad- \overline{\pi i\E  |\hat \phi^\kappa_t|^2(k-j)  w_j k \cdot  v_{j,n} - \pi i\E|\hat \phi^\kappa_t|^2(k)   w_{j}k \cdot  v_{j,n}}\Big) 
        \\&\qquad= -4\pi^2 \sum_j w_j^2 \Big(  \E|\hat \phi^\kappa_t|^2(k)-\E  |\hat \phi^\kappa_t|^2(k-j)   \Big) \sum_{n=1}^{d-1} (k\cdot v_{j,n})^2   \\&\qquad=  -4\pi^2 \sum_j w_j^2 |\Pi_{j^\perp} k|^2 \Big(\E|\hat \phi^\kappa_t|^2(k) -\E  |\hat \phi^\kappa_t|^2(k-j)\Big).
    \end{align*}
    Plugging this into~\eqref{eq:fourier time derivative}---and recalling the definition of $a_k,f_k$---we conclude.
\end{proof}

The following computation is a bit formal but can easily be verified to hold due to the presence of the diffusion $\kappa>0$, ensuring that for any $t \geq 0$
\[\int_0^t \sum_k |k|^2|a_k(s)|\,ds = \E \int_0^t \|\phi^\kappa_s\|_{H^1(\T^d)}^2\,ds \leq \kappa^{-1} \|\phi^\kappa_0\|_{L^2}^2 < \infty.\]
This (together with H\"older's inequality) ensures all of our sums are absolutely convergent, hence the below manipulations are valid.

\begin{proposition}
\label{prop:ell p energy inequality}
    For all $p \geq 1$, letting $(a_k)_{k \in \Z^d \backslash 0}$ with $a_k \geq 0$ and $\sum_{k \in \Z^d \backslash\{0\}} a_k(0) <\infty$ such that $a_k$ solves~\eqref{eq:ak equation}. Then 
    \begin{equation}
    \label{eq:ell p energy equality}
    \frac{d}{dt} \sum_k a_k^p = - 8 \pi^2 \kappa p\sum_k |k|^2 a_k^p -2\pi^2p \sum_{k,j} w_j^2 |\Pi_{j^\perp} k|^2 \big(a_k -a_{k-j}\big) \big(a_k^{p-1} - a_{k-j}^{p-1}\big).
    \end{equation}
\end{proposition}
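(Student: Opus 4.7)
The plan is to compute $\frac{d}{dt}\sum_k a_k^p$ by differentiating term-by-term and substituting the evolution equation~\eqref{eq:ak equation}, then to symmetrize the resulting double sum in $(k,j)$. Before doing any manipulation, I would first note that the justification for termwise differentiation and for all the interchanges of sums that follow is already supplied in the paragraph preceding the proposition: the presence of $\kappa>0$ gives $\int_0^t \sum_k |k|^2 a_k(s)\,ds<\infty$, so the sums arising here (bounded by $\sum w_j^2 |k|^2 a_k^{p-1} a_{k-j}$ etc.) are absolutely convergent, making a standard dominated-convergence/Fubini argument applicable.

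The main calculation is then straightforward. The chain rule gives $\frac{d}{dt} a_k^p = p a_k^{p-1}\dot a_k$, and substituting~\eqref{eq:ak equation} yields
\begin{equation*}
\frac{d}{dt}\sum_k a_k^p = -8\pi^2\kappa p\sum_k |k|^2 a_k^p - 4\pi^2 p \sum_{k,j} a_k^{p-1} w_j^2 |\Pi_{j^\perp}k|^2 (a_k - a_{k-j}).
\end{equation*}
The diffusion term is already in the correct form; the issue is to rewrite the interaction term as a symmetric double sum. For this I would apply the change of variables $k \mapsto k+j$ followed by $j \mapsto -j$ in the interaction sum, using the three key identities $w_j = w_{-j}$, $\Pi_{(-j)^\perp} = \Pi_{j^\perp}$, and $\Pi_{j^\perp}(k+j) = \Pi_{j^\perp} k$ (the last because $\Pi_{j^\perp} j = 0$). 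Under this substitution the sum becomes $\sum_{k,j} a_{k-j}^{p-1} w_j^2 |\Pi_{j^\perp}k|^2 (a_{k-j} - a_k)$, which is the negative of the original expression with $a_k^{p-1}$ replaced by $a_{k-j}^{p-1}$.

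Averaging the two expressions gives
\begin{equation*}
\sum_{k,j} a_k^{p-1} w_j^2 |\Pi_{j^\perp}k|^2 (a_k - a_{k-j}) = \tfrac{1}{2}\sum_{k,j} w_j^2 |\Pi_{j^\perp}k|^2 (a_k - a_{k-j})(a_k^{p-1} - a_{k-j}^{p-1}),
\end{equation*}
which plugged back in produces exactly the claimed identity~\eqref{eq:ell p energy equality}. There is no real obstacle here; the only mild subtlety is tracking the symmetry $\Pi_{j^\perp}(k+j) = \Pi_{j^\perp}k$ so that the coefficient $|\Pi_{j^\perp}k|^2$ is preserved under the shift, which is what allows both sides of the averaging argument to have matching weights. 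This completes the proof sketch.
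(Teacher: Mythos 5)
Your proposal is correct and follows essentially the same route as the paper: chain rule plus substitution of the evolution equation, then symmetrization of the interaction sum via the change of variables $k \mapsto k+j$, $j \mapsto -j$ using $w_j = w_{-j}$ and $\Pi_{j^\perp}(k+j) = \Pi_{j^\perp}k$, and averaging to produce the factor $(a_k^{p-1} - a_{k-j}^{p-1})$. The convergence justification you cite (the $\kappa>0$ bound from the preceding paragraph) is also exactly how the paper handles it.
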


\begin{proof}
    Computing directly with~\eqref{eq:ak equation}, we have that
    \begin{equation}
\label{eq:dt of ell p}
        \frac{d}{dt} \sum_k a_k^p = p \sum_k a_k^{p-1} \dot a_k
        =- 8 \pi^2 \kappa p\sum_k |k|^2 a_k^p -4\pi^2p \sum_{k,j} w_j^2 |\Pi_{j^\perp} k|^2 \big(a_k -a_{k-j}\big) a_k^{p-1}.
    \end{equation}
    Then we note that, using that $w_j = w_{-j}$,
    \begin{align*}
        \sum_{k,j} w_j^2 |\Pi_{j^\perp} k|^2 \big(a_k -a_{k-j}\big) a_k^{p-1} &= \sum_{k,j} w_j^2 |\Pi_{j^\perp} (k+j)|^2 \big(a_{k+j} -a_{k}\big) a_{k+j}^{p-1}
        \\&= \sum_{k,j} w_{-j}^2 |\Pi_{j^\perp} k|^2 \big(a_{k-j} -a_{k}\big) a_{k-j}^{p-1}
        \\&= -\sum_{k,j} w_{j}^2 |\Pi_{j^\perp} k|^2 \big(a_{k}-a_{k-j} \big) a_{k-j}^{p-1}.
    \end{align*}
    Together with~\eqref{eq:dt of ell p}, this gives the result.
\end{proof}

We now want to apply Proposition~\ref{prop:ell p inequality clean} together with Proposition~\ref{prop:ell p energy inequality} to deduce the anomalous dissipation and anomalous regularization estimates of Theorem~\ref{thm:kraichnan} (in the language of the $a_k$). We recall that we are assuming the coefficients $w_k$ of $du_t$ satisfy Assumption~\ref{asmp:wk} for $\alpha \in (0,1).$ We emphasize the constants in the result and proof below are uniform in $\kappa$. Theorem~\ref{thm:kraichnan} is directly implied by the result below, after recalling the definition of $a_k.$

\begin{proposition}
    Let $(a_k)_{k \in \Z^d \backslash 0}$ with $a_k \geq 0$ and $\sum_{k \in \Z^d \backslash \{0\}} a_k(0) <\infty$ such that $a_k$ solves~\eqref{eq:ak equation}. Then for all $R \geq r_0,$ there exists $C(\beta, \delta,R,\Psi)>0$ such that for all $\kappa \in (0,1],$ we have the bounds
    \begin{align}
        \label{eq:exponential decay}
        \|a_k(t)\|_{\ell^1(\Z^d)} &\leq e^{{-} C^{-1}t}\|a_k(0)\|_{\ell^1(\Z^d)},\\
        \label{eq:smoothing}
        \int_0^1 \sum_{k \in \Z^d \backslash \{0\}} \Big(\frac{S(R|k|)}{\log |k| + 1}\Big)^2 a_k(s)\,ds  &\leq C  \|a_k(0)\|_{\ell^1(\Z^d)}.
    \end{align}
\end{proposition}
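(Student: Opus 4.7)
The plan is to couple the $\ell^p$ energy identity of Proposition~\ref{prop:ell p energy inequality} with the weighted Poincar\'e-type inequality of Proposition~\ref{prop:ell p inequality clean}, producing a master differential inequality at each $p\in(1,2]$, and then to transfer the resulting $\ell^p$ bounds down to $\ell^1$ by two separate H\"older arguments. Dropping the non-negative diffusion contribution in~\eqref{eq:ell p energy equality} and applying Proposition~\ref{prop:ell p inequality clean} to the mixing sum yields
\[
\frac{d}{dt}\sum_k a_k^p \leq -\frac{p-1}{C}\sum_k S(R|k|)^2 a_k^p, \qquad p\in(1,2].
\]
Two consequences are immediate. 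Integrating in time gives the $\ell^p$ smoothing bound $\int_0^\infty\sum_k S(R|k|)^2 a_k^p(s)\,ds \leq \frac{C}{p-1}\|a(0)\|_{\ell^p}^p \leq \frac{C}{p-1}\|a(0)\|_{\ell^1}^p$, while the minorization $S(R|k|)\geq S(R)$ together with Gr\"onwall gives exponential $\ell^p$ decay at rate $(p-1)S(R)^2/C$.

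To obtain the target smoothing~\eqref{eq:smoothing}, I decompose $\Z^d\setminus\{0\}$ into doubly exponential annuli $A_n = \{e^{2^n}\leq |k| < e^{2^{n+1}}\}$ and on each $A_n$ apply H\"older's inequality with an $n$-dependent exponent $p_n = 1 + 1/(D\cdot 2^{n+1}-1)$, so that $q_n \approx D\cdot 2^{n+1}$ with $D := d + 2(1-\alpha)$. The split
\[
\sum_{k\in A_n}\frac{S(R|k|)^2}{(\log|k|+1)^2}a_k \leq \Big(\sum_{k\in A_n} S(R|k|)^2 a_k^{p_n}\Big)^{1/p_n}\Big(\sum_{k\in A_n}\frac{S(R|k|)^2}{(\log|k|+1)^{2q_n}}\Big)^{1/q_n}
\]
is calibrated so that the residual weight factor decays geometrically in $n$: using $|A_n|\leq e^{d\cdot 2^{n+1}}$, the Cauchy--Schwarz bound $S(r)\leq r^{1-\alpha}$, and $(\log|k|+1)^{2q_n}\geq 2^{2nq_n}$, this factor is $O(2^{-2n})$ times bounded powers of $R$. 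After Jensen in time, the $\ell^p$ smoothing controls the first factor by $(C/(p_n-1))^{1/p_n}\|a(0)\|_{\ell^1}\approx D\cdot 2^{n+1}\|a(0)\|_{\ell^1}$, so the product is $O(2^{-n}\|a(0)\|_{\ell^1})$ per annulus and summable in $n$. The finitely many low-frequency modes are handled separately with a fixed exponent $p=2$.

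For the exponential $\ell^1$ decay~\eqref{eq:exponential decay}, I combine the $\ell^p$ smoothing with a H\"older inequality exploiting~\eqref{eq:S not too small}. Fixing $p \in (1, 1+2\beta/d)$ (so that $\sum_k S(R|k|)^{-2/(p-1)}$ converges, using $S(R|k|)\geq \delta (R|k|)^\beta$), H\"older gives pointwise in time
\[
\sum_k a_k(t) \leq \Big(\sum_k S(R|k|)^2 a_k^p(t)\Big)^{1/p}\Big(\sum_k S(R|k|)^{-2/(p-1)}\Big)^{(p-1)/p}.
\]
Integrating and applying H\"older in time together with the $\ell^p$ smoothing yields $\int_0^T\sum_k a_k(s)\,ds\leq C_p T^{(p-1)/p}\|a(0)\|_{\ell^1}$. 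Monotonicity of $\sum_k a_k$ (from~\eqref{eq:ell p energy equality} at $p=1$, where the mixing term vanishes identically) and pigeonhole then give $\sum_k a_k(T)\leq C_p T^{-1/p}\|a(0)\|_{\ell^1}$; picking $T_0$ large enough that $C_p T_0^{-1/p}\leq 1/2$ and iterating $T \mapsto T+T_0$ produces the claimed exponential decay.

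The principal technical hurdle is the first H\"older step leading to~\eqref{eq:smoothing}: because the weight $(\log|k|+1)^{-2}$ decays only logarithmically in $|k|$, the annular decomposition must be doubly exponential, and the exponents $p_n\to 1$ must be tuned precisely to balance the super-exponential volume of $A_n$ against the $(p_n-1)^{-1}$ blow-up inherent in the $\ell^p$ smoothing. Uniformity of all constants in $\kappa\in(0,1]$ is essentially free, since the $\kappa$-diffusion term in~\eqref{eq:ell p energy equality} is dropped with a favorable sign, which is the mechanism producing the desired anomalous behavior in the vanishing-diffusivity limit.
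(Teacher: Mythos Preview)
Your argument is correct. For the smoothing estimate~\eqref{eq:smoothing} you follow the paper's route essentially verbatim: both proofs decompose into doubly exponential annuli and apply H\"older with exponents $p_n = 1 + O(2^{-n})$; your calibration $q_n = D\cdot 2^{n+1}$ with $D = d+2(1-\alpha)$ is a cosmetic variant of the paper's choice $p_n = 1 + 2^{-n}$, and your single H\"older split placing part of the weight $S^2$ in each factor is equivalent to the paper's two-step procedure of first pulling out $(\log|k|+1)^{-2}$ as a constant on the annulus and then extracting $S^{2(p_n-1)}$ via monotonicity.

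For the exponential decay~\eqref{eq:exponential decay} your route is a genuine variation. The paper first obtains $\ell^p$ exponential decay via Gr\"onwall, then uses the split $\|a\|_{\ell^1} \leq \||k|^\beta a\|_{\ell^1}^{1/2}\||k|^{-\beta}a\|_{\ell^1}^{1/2}$, bounds the second factor by $\|a\|_{\ell^p}$ for a suitable $p$, integrates over $[t,t+1]$, and crucially feeds the already-established $\ell^1$ smoothing estimate~\eqref{eq:smoothing} back in to control the first factor. You instead bypass both the Gr\"onwall step and the dependence on~\eqref{eq:smoothing}: your single H\"older inequality $\|a\|_{\ell^1} \leq C_p\big(\sum_k S(R|k|)^2 a_k^p\big)^{1/p}$ (valid for $p<1+2\beta/d$ so that the dual sum converges via~\eqref{eq:S not too small}), combined with the time-integrated $\ell^p$ smoothing and monotonicity of $\|a(t)\|_{\ell^1}$, yields polynomial decay, which you then iterate using the time-autonomy of~\eqref{eq:ak equation}. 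Your argument is slightly more self-contained in that the two conclusions are decoupled; the paper's argument makes the interdependence of regularization and dissipation more explicit.
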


\begin{proof}
    As noted above, since $\kappa>0$, we have that for all $t \geq 0$
    \[\int_0^t \sum_k |k|^2 |a_k(s)|\,ds < \infty.\]
    Thus we can apply~\eqref{eq:ell p energy equality} and Proposition~\ref{prop:ell p inequality clean} to give that for $p \in (1,2],$
    \begin{align}
    \frac{d}{dt} \sum_k a_k^p &\leq -2\pi^2p \sum_{k,j} w_j^2 |\Pi_{j^\perp} k|^2 \big(a_k -a_{k-j}\big) \big(a_k^{p-1} - a_{k-j}^{p-1}\big)
    \notag \\& \leq -C^{-1} (p-1)\sum_{k} S(R |k|)^2 a_k^p.
    \label{eq:inequalities combined}
    \end{align}

    For~\eqref{eq:smoothing}, we integrate~\eqref{eq:inequalities combined} over $t \in [0,1],$
    \begin{equation}
   \label{eq:intergrated bound} 
    \int_0^1\sum_{k \in \Z^d \backslash \{0\}} S(R |k|)^2 a_k^p(s)\,ds \leq \frac{C}{p-1} \sum_k a_k^p(0) \leq \frac{C}{p-1} \|a_k(0)\|_{\ell^1(\Z^d)}^p.
    \end{equation}
    We however want the bound in $\ell^1$ on the left hand side, which we can get from~\eqref{eq:intergrated bound} by the following argument. By H\"older's inequality, for any sequence $p_n \in (1,2)$,
    \begin{align*}&\int_0^1 \sum_{k \in \Z^d \backslash \{0\}} (\log |k| + 1)^{-2} S(R|k|)^2 a_k(s)\,ds 
    \\&\qquad= \int_0^1\sum_{n=0}^\infty \sum_{ 2^{2^n-1} \leq |k| < 2^{2^{n+1}-1}} (\log |k| + 1)^{-2} S(R|k|)^2 a_k(s)\,ds
    \\&\qquad\leq \sum_{n=0}^\infty  2^{-2 n} 2^{p_n^{-1}(p_n-1)2^{n+1}} \Big(\int_0^1\sum_{ 2^{2^n-1} \leq |k| < 2^{2^{n+1}-1}}  S(R|k|)^{2p_n} a_k^{p_n}(s)\,ds\Big)^{1/p_n}.
    \end{align*}
    We then use that $S$ is increasing to bound the final term in the above display by 
    \begin{align*}
     \\&\sum_{n=0}^\infty  2^{(p_n-1)2^{n+1}-2 n} S(R 2^{2^{n+1}})^{2 (p_n-1)} \Big(\int_0^1\sum_{ 2^{2^n-1} \leq |k| < 2^{2^{n+1}}}  S(R|k|)^{2} a_k^{p_n}(s)\,ds\Big)^{1/p_n}
    \\&\qquad \leq \|a_k(0)\|_{\ell^1(\Z^d)} \sum_{n=0}^\infty  2^{(p_n-1)2^{n+1}-2 n}  (R2^{2^{n+1}})^{2(1-\alpha)(p_n-1)}\Big( \frac{C}{p_n-1}\Big)^{1/p_n},
    \end{align*}
    where for the final ienquality, we use~\eqref{eq:intergrated bound} to bound the $ds$ integral and~\eqref{eq:S bar bound} to bound $S(R2^{2^n})$.
    Since $p_n \in (1,2)$, we have that $ \Big( \frac{C}{p_n-1}\Big)^{1/p_n} \leq   \frac{C}{p_n-1}.$ Using also that $\alpha \geq 0, p_n \leq 2$, putting everything together, we see 
    \[\int_0^1 \sum_{k \in \Z^d \backslash \{0\}}  \frac{S(R|k|)^2}{(\log |k| + 1)^{2}} a_k(s)\,ds  \leq C \|a_k(0)\|_{\ell^1(\Z^d)} \sum_{n=0}^\infty  2^{3(p_n-1)2^{n+1}-2 n} (p_n-1)^{-1}.\]
    We choose then $p_n = 2^{-n} + 1$, giving
    \[\int_0^1 \sum_{k \in \Z^d \backslash \{0\}}  \frac{S(R|k|)^2}{(\log |k| + 1)^{2}} a_k(s)\,ds  \leq  C   \|a_k(0)\|_{\ell^1(\Z^d)} \sum_{n=0}^\infty  2^{-n}  \leq C \|a_k(0)\|_{\ell^1(\Z^d)},\]
    which is~\eqref{eq:smoothing}.

    We now consider~\eqref{eq:exponential decay}. By~\eqref{eq:inequalities combined} and~\eqref{eq:S not too small}, we have that
    \[\frac{d}{dt} \sum_k a_k^p \leq  -C^{-1} (p-1) \sum_k a_k^p,\]
    so we can apply Gr\"onwall's inequality to give for any $t \geq 0$
    \begin{equation}
    \label{eq:ell p exponential decay}
    \|a_k(t)\|_{\ell^p} \leq e^{- C^{-1} (p-1) t} \|a_k(0)\|_{\ell^p} \leq  e^{- C^{-1} (p-1) t}\|a_k(0)\|_{\ell^1}. 
    \end{equation}
    Then letting $\beta$ as in Assumption~\ref{asmp:wk}, by H\"older's inequality we have,
    \[\|a_k\|_{\ell^1} \leq \||k|^\beta a_k\|_{\ell^1}^{1/2}\||k|^{-\beta} a_k\|_{\ell^1}^{1/2} \leq  \||k|^\beta a_k\|_{\ell^1}^{1/2}  \|a_k\|_{\ell^p}^{1/2}\||k|^{-\beta}\|_{\ell^q}^{1/2}  ,\]
    for $p,q$ H\"older conjugates. Then we can choose $p(\beta) \in (1,2]$ so that $\||k|^{-\beta}\|_{\ell^q}\leq C(\beta)<\infty$, thus
    \[\|a_k\|_{\ell^1} \leq C \||k|^\beta a_k\|_{\ell^1}^{1/2}  \|a_k\|_{\ell^p}^{1/2}.\]
    Then by~\eqref{eq:ell p exponential decay}, there exists $C(\beta,\delta,R,\Psi)>0$ so that for any $t \geq 0$
    \[\|a_k(t)\|_{\ell^1} \leq C e^{- C^{-1} t} \||k|^\beta a_k(t)\|_{\ell^1}^{1/2} \|a_k(0)\|_{\ell^1}^{1/2}.\]
    Integrating this bound over $s \in [t,t+1],$ we have that
    \begin{align}
    \int_t^{t+1} \|a_k(s)\|_{\ell^1}\,ds & \leq  C
    e^{- C^{-1} t}  \|a_k(0)\|_{\ell^1}^{1/2}\int_t^{t+1} \||k|^\beta a_k(s)\|_{\ell^1}^{1/2}\,ds 
    \notag\\&\leq  C
    e^{- C^{-1} t}  \|a_k(0)\|_{\ell^1}^{1/2} \Big( \int_t^{t+1} \sum_{k} |k|^{\beta} a_k(s)\,ds\Big)^{1/2}
    \notag\\&\leq   C
    e^{- C^{-1} t}  \|a_k(0)\|_{\ell^1}^{1/2} \Big( \int_t^{t+1} \sum_{k} \Big(\frac{S(R|k|)}{\log |k| + 1}\Big)^2a_k(s)\,ds\Big)^{1/2},
    \label{eq:exponential decay interpolated}
    \end{align}
    where we use~\eqref{eq:S not too small} of Assumption~\ref{asmp:wk}. Note that by~\eqref{eq:ell p energy equality} with $p=1$, we have that $\|a_k(t)\|_{\ell^1}$ is decreasing in $t$. Combining this fact with~\eqref{eq:exponential decay interpolated} and~\eqref{eq:smoothing}, we have that
    \[\|a_k(t+1)\|_{\ell^1}\leq    \int_t^{t+1} \|a_k(s)\|_{\ell^1}\,ds \leq  C
    e^{- C^{-1} t}  \|a_k(0)\|_{\ell^1}^{1/2} \|a_k(t)\|_{\ell^1}^{1/2} \leq C e^{-C^{-1}t }  \|a_k(0)\|_{\ell^1}.\]
    This then gives~\eqref{eq:exponential decay}, after perhaps increasing the constant $C$ to cover $t \in [0,1].$
\end{proof}

\section{Obukhov--Corrsin lower bounds}

\label{s:lower bounds}

We let $\eta \in C_c^\infty(B_1)$ be such that $\eta \geq 0$ and $\int \eta(x)\,dx =1.$ We let $\eta_r(x) :=r^{-d} \eta(x/r).$ For $r \in (0,1),$ we view $\eta_r$ as a function $\T^d \to \R$ under the identification $\T^d \cong [0,1]^d/ \sim$.

\subsection{Obukhov--Corrsin lower bounds for correlated-in-time models}

The following is essentially the argument of~\cite{constantin_onsagers_1994}, which appeared first in the passive scalar case in~\cite[Theorem 4]{drivas_anomalous_2022}.

\begin{proposition}
\label{prop:CET}
    Let $\phi^\kappa_t$ solve~\eqref{eq:advection-diffusion-free}. Then there exists $C(d)>0$ such that for all $r \in (0,1),$ we have the bound
      \begin{equation}
      \label{eq:CET} {-}\frac{d}{dt} \|\eta_r* \phi^\kappa_t\|_{L^2}^2 \leq C \big(\kappa r^{-2}  + \|v_t\|_{C^\alpha_x} r^{-(1-\alpha)} \big)  \frac{1}{|B_r|} \int_{B_r} \big\|\phi_t^\kappa(x-y) - \phi_t^\kappa(x)\big\|_{L^2_x}^2\,dy.
      \end{equation}
    Thus in particular, for all $r \geq \kappa^{\frac{1}{1+\alpha}}$, 
    \begin{equation}
        \label{eq:CET fourier}
         {-}\frac{d}{dt} \|\eta_r* \phi^\kappa_t\|_{L^2}^2 \leq C (1 + \|v_t\|_{C^\alpha_x}) r^{-(1-\alpha)}  \sum_{k \in \Z^d \backslash\{0\}} \min(|k|^2 r^2, 1) |\hat \phi^\kappa_t(k)|^2.
    \end{equation}
\end{proposition}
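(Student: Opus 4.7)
The plan is to prove~\eqref{eq:CET} via the Constantin--E--Titi commutator argument, and then derive~\eqref{eq:CET fourier} by rewriting the local structure function on the right-hand side in Fourier.

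Let $\bar\phi := \eta_r * \phi^\kappa_t$ and $\bar v_t := \eta_r * v_t$. Applying $\eta_r*$ to~\eqref{eq:advection-diffusion-free} and using $\nabla \cdot v_t = 0$ to put the transport term in divergence form, then testing the resulting equation against $\bar\phi$, the diffusive contribution becomes $-\kappa \|\nabla \bar\phi\|_{L^2}^2$. Splitting $\eta_r*(v_t \phi^\kappa_t) = \bar v_t \bar\phi + R_r$ with $R_r := \eta_r*(v_t \phi^\kappa_t) - \bar v_t \bar\phi$, and noting that the $\bar v_t \bar\phi$ contribution vanishes after one integration by parts (using $\nabla \cdot \bar v_t = 0$), one obtains
\[
-\tfrac{1}{2}\tfrac{d}{dt}\|\bar\phi\|_{L^2}^2 = \kappa \|\nabla \bar\phi\|_{L^2}^2 - \int \nabla\bar\phi \cdot R_r\,dx.
\]

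The crux is to estimate both $\|\nabla\bar\phi\|_{L^2}$ and $\|R_r\|_{L^2}$ in terms of the local structure function $D(r)^2 := |B_r|^{-1}\int_{B_r}\|\phi^\kappa_t(\cdot-y)-\phi^\kappa_t(\cdot)\|_{L^2}^2\,dy$. Using $\int \nabla\eta_r = 0$, I write $\nabla\bar\phi(x) = \int\nabla\eta_r(y)[\phi^\kappa_t(x-y)-\phi^\kappa_t(x)]\,dy$, and Cauchy--Schwarz together with $\|\nabla\eta_r\|_{L^1}\lesssim r^{-1}$ and $|\nabla\eta_r|\lesssim r^{-d-1}\mathbf{1}_{B_r}$ then yields $\|\nabla\bar\phi\|_{L^2}^2 \lesssim r^{-2} D(r)^2$. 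For $R_r$ I use the standard Constantin--E--Titi identity
\[
R_r(x) = \int \eta_r(y)\bigl(v_t(x-y)-v_t(x)\bigr)\bigl(\phi^\kappa_t(x-y)-\phi^\kappa_t(x)\bigr)\,dy - (\bar v_t - v_t)(\bar\phi - \phi^\kappa_t)(x),
\]
after which the pointwise bound $|v_t(x-y) - v_t(x)| \leq \|v_t\|_{C^\alpha_x} r^\alpha$ for $|y|\leq r$ and Cauchy--Schwarz in $y$ give $\|R_r\|_{L^2}^2 \lesssim \|v_t\|_{C^\alpha_x}^2 r^{2\alpha} D(r)^2$. Applying Cauchy--Schwarz to $\bigl|\int \nabla\bar\phi \cdot R_r\,dx\bigr|$ then produces~\eqref{eq:CET}.

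To pass to~\eqref{eq:CET fourier}, by Plancherel
\[
D(r)^2 = \sum_{k\in \Z^d\backslash\{0\}} |\hat\phi^\kappa_t(k)|^2 \cdot |B_r|^{-1}\int_{B_r}|e^{-2\pi i k\cdot y}-1|^2\,dy,
\]
and the elementary inequality $|e^{-2\pi i k\cdot y}-1|^2 \leq C \min(|k|^2|y|^2, 1)$ gives $D(r)^2 \leq C\sum_k \min(|k|^2 r^2,1)|\hat\phi^\kappa_t(k)|^2$. The restriction $r \geq \kappa^{1/(1+\alpha)}$ is precisely equivalent to $\kappa r^{-2} \leq r^{-(1-\alpha)}$, so the diffusive prefactor in~\eqref{eq:CET} can be absorbed into $(1+\|v_t\|_{C^\alpha_x}) r^{-(1-\alpha)}$. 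I do not anticipate a serious obstacle beyond keeping the signs straight in the CET identity, so that the $r^\alpha$ gain from $v_t \in C^\alpha_x$ and the structure function of $\phi^\kappa_t$ appear with matching exponents.
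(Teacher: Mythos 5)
Your proposal is correct and follows essentially the same route as the paper's proof: mollify the equation, use the energy identity with the Constantin--E--Titi commutator decomposition $\eta_r*(v_t\phi^\kappa_t) - \bar v_t\bar\phi$, bound both the gradient term and the commutator by the local structure function using $\int\nabla\eta_r = 0$ and the $C^\alpha_x$ modulus of $v_t$, and finish via Plancherel together with $\kappa r^{-2}\le r^{-(1-\alpha)}$ for $r\ge\kappa^{1/(1+\alpha)}$. The only (immaterial) difference is bookkeeping: you apply Cauchy--Schwarz in $y$ to work with the squared structure function throughout, whereas the paper carries the unsquared average $\frac{1}{|B_r|}\int_{B_r}\|\phi^\kappa_t(\cdot-y)-\phi^\kappa_t(\cdot)\|_{L^2_x}\,dy$ via Minkowski's integral inequality and applies Jensen at the end.
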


\begin{proof}
    Convolving~\eqref{eq:advection-diffusion-free} with $\eta_r$, we see that
    \[\partial_t (\eta_r * \phi^\kappa_t) = \kappa \Delta (\eta_r * \phi^\kappa_t)- (\eta_r * v_t)\cdot \nabla (\eta_r  *\phi^\kappa_t) + \nabla \cdot \big(\eta_r * v_t \eta_r * \phi^\kappa_t - \eta_r*(v_t \phi^\kappa_t)\big). \]
    Thus
    \begin{align} {-\frac{1}{2}}\frac{d}{dt} \|\eta_r* \phi^\kappa_t\|_{L^2}^2 &=  \kappa \|\nabla \eta_r * \phi^\kappa_t\|_{L^2}^2 +\int \nabla \eta_r * \phi_t^\kappa \big( \eta_r * v_t \eta_r * \phi^\kappa_t - \eta_r*(v_t \phi^\kappa_t)\big)
   \notag \\&\leq  \kappa \|\nabla \eta_r * \phi^\kappa_t\|_{L^2}^2 +\| \nabla \eta_r * \phi_t^\kappa\|_{L^2}  \big\| \eta_r * v_t \eta_r * \phi^\kappa_t - \eta_r*(v_t \phi^\kappa_t)\|_{L^2}.
   \label{eq:big display for CET}
    \end{align}
    We then note that
    \begin{align}\|\nabla \eta_r * \phi^\kappa_t\|_{L^2}& = \Big \|\int \nabla \eta_r(y) \phi^\kappa_t(x-y)\,dy\Big\|_{L^2_x} 
    \notag\\&= \Big \|\int \nabla \eta_r(y) \big(\phi^\kappa_t(x-y)-\phi^\kappa_t(x)\big)\,dy\Big\|_{L^2_x}
   \notag \\&\leq C r^{-1} \frac{1}{|B_r|} \int_{B_r} \big\|\phi_t^\kappa(x-y) - \phi_t^\kappa(x)\big\|_{L^2_x}\,dy,
   \label{eq:grad term}
    \end{align}
    where we use the definition of $\eta$ and Minkowski's integral inequality for the final line.

    Note that
    \begin{align*}&\eta_r * v_t(x) \eta_r * \phi^\kappa_t(x) - \eta_r*(v_t \phi^\kappa_t)(x)
    \\&\qquad= (\eta_r * v_t(x) - v_t(x))(  \eta_r * \phi^\kappa_t(x)  - \phi^\kappa_t(x))
    \\&\qquad\qquad - \int \eta_r(y) \big(v_t(x-y) - v_t(x)\big)\big(\phi^\kappa_t(x-y) - \phi^\kappa_t(x)\big)\,dy.
    \end{align*}
    Thus, again using Minkowski's integral inequality, we can readily see
    \begin{align}
         &\big\| \eta_r * v_t \eta_r * \phi^\kappa_t - \eta_r*(v_t \phi^\kappa_t)\|_{L^2} 
         \notag\\&\qquad\leq 2\sup_{|y| \leq r,x \in \T^d} |v_t(x-y) - v_t(x)| \int \eta_r(y)\big\|\phi^\kappa_t(x-y) - \phi^\kappa_t(x)\big\|_{L^2_x}\,dy
         \notag\\&\qquad\leq 2r^{\alpha} \|v_t\|_{C^\alpha_x} \frac{1}{|B_r|} \int_{B_r} \big\|\phi_t^\kappa(x-y) - \phi_t^\kappa(x)\big\|_{L^2_x}\,dy.
         \label{eq:commutator term}
    \end{align}
    Then combining~\eqref{eq:big display for CET},~\eqref{eq:grad term},~\eqref{eq:commutator term}, and Jensen's inequality, we get~\eqref{eq:CET}. For~\eqref{eq:CET fourier}, we use that $r \geq \kappa^{\frac{1}{1+\alpha}}$, the Plancherel isomorphism, and the behavior of Fourier coefficients under translation to give 
    \[ {-}\frac{d}{dt} \|\eta_r* \phi^\kappa_t\|_{L^2}^2 \leq C (1 + \|v_t\|_{C^\alpha_x}) r^{-(1-\alpha)} \sum_k |\hat \phi^\kappa_t|^2(k) \frac{1}{|B_r|}\int_{B_r} |1 - e^{2\pi i k \cdot y}|^2\,dy.\]
    Bounding the integral on the right hand side, we conclude.
\end{proof}

\begin{proof}[Proof of Theorem~\ref{thm:correlated lower bound}]
    Letting $\phi^\kappa_t$ solve~\eqref{eq:advection-diffusion-free}, we have that (both by the anomalous dissipation and by the fact $\kappa>0$),
    \[\lim_{t \to \infty} \E\|\phi^\kappa_t\|_{L^2}^2=0.\]
    Since $\eta_r*F$ converges to $F$ in $L^2(\T^d)$ as $r \to 0$, there exists some $r_0 >0$ such that for all $r \in (0,r_0)$, $\|\eta_r * \phi^\kappa_0\|_{L^2} = \|\eta_r * F\|_{L^2} \geq \frac{1}{2} \|F\|_{L^2}.$ Thus, taking an expectation, integrating~\eqref{eq:CET fourier} over $t \in [0,\infty)$, and using~\eqref{eq:v regularity}, for all $r \in (\kappa^{\frac{1}{1+\alpha}},r_0)$, we have that
    \begin{align}
    \tfrac{1}{4} \|F\|_{L^2}^2& \leq \lim_{t \to \infty} \E\|\eta_r * \phi^\kappa_0\|_{L^2}^2 - \E\|\eta_r * \phi^\kappa_t\|_{L^2}^2 
    \notag\\&\leq C r^{-(1-\alpha)} \E \int_0^\infty \sum_k \min(|k|^2 r^2, 1) |\hat \phi^\kappa_t(k)|^2
    \notag\\&=  C r^{-(1-\alpha)}\sum_k \min(|k|^2 r^2, 1)  \E_{\mu^\kappa} |\hat \theta^\kappa(k)|^2,
    \label{eq:lower bound integrated}
    \end{align}
    where $\mu^\kappa$ is the unique invariant measure from Proposition~\ref{prop:invariant measures} and we use the representation from Proposition~\ref{prop:invariant measures} for the final equality. Then for $1 \leq a < r^{-1} <  b < \infty$ to be chosen, we bound
    \begin{align}
         &\sum_k \min(|k|^2 r^2, 1)  \E_{\mu^\kappa} |\hat \theta^\kappa(k)|^2 
        \notag \\&\qquad\leq r^2\sum_{|k| < a} |k|^2 |\hat \theta^\kappa(k)|^2 + \sum_{a \leq |k| \leq b} \E_{\mu^\kappa} |\hat \theta^\kappa(k)|^2 + \sum_{|k| > b} \E_{\mu^\kappa} |\hat \theta^\kappa(k)|^2
      \notag   \\&\qquad\leq \sum_{a \leq |k| \leq b} \E_{\mu^\kappa} |\hat \theta^\kappa(k)|^2 +  r^2 a^{2(1-\beta)} (\log a +1)^{2m} \sum_{|k| < a} \frac{|k|^{2 \beta}}{(\log |k| +1)^{2m}} \E_{\mu^\kappa} |\hat \theta^\kappa(k)|^2 
       \notag  \\&\qquad\qquad\qquad+ C \frac{(\log |b| + 1)^{2m}}{b^{2\beta}}\sum_{|k| > b} \E_{\mu^\kappa}\frac{|k|^{2 \beta}}{(\log |k| +1)^{2m}} |\hat \theta^\kappa(k)|^2
        \notag \\&\qquad\leq \sum_{a \leq |k| \leq b} \E_{\mu^\kappa} |\hat \theta^\kappa(k)|^2 + C \Big(r^2 a^{2(1-\beta)} (\log a +1)^{2m}  + \frac{(\log |b| + 1)^{2m}}{b^{2\beta}}\Big)\|F\|_{L^2}^2,
         \label{eq:high low middle split}
    \end{align}
    where we use~\eqref{eq:sigma lower bound correlated}, the assumption of anomalous regularization up to $\sigma(H)$, and Proposition~\ref{prop:OC upper bounds} for the final inequality.

    Choosing
    \[a = C^{-1} \frac{r^{- \frac{1+\alpha}{2(1-\beta)}}}{\big(\log r^{-1}\big)^{\frac{m}{1-\beta}}} \quad \text{and} \quad b=  C r^{-\frac{1-\alpha}{2\beta}} \big(\log r^{-1}\big)^{\frac{m}{\beta}}\]
    for $C$ sufficiently large, we have that 
    \[ C r^{-(1-\alpha)}\Big(r^2 a^{2(1-\beta)} (\log a +1)^{2m}  + \frac{(\log |b| + 1)^{2m}}{b^{2\beta}}\Big)\|F\|_{L^2} \leq \frac{1}{8} \|F\|_{L^2}^2,\]
    so we can combine~\eqref{eq:lower bound integrated} and~\eqref{eq:high low middle split} to give~\eqref{eq:lower bound correlated general}. The first inequality of~\eqref{eq:lower bound correlated particular} then follows directly. For the second inequality of~\eqref{eq:lower bound correlated particular}, we compute using Proposition~\ref{prop:OC upper bounds} and~\eqref{eq:sigma lower bound correlated}:
    \begin{align*}
        &\sum_{C^{-1} r^{- 1}(\log r^{-1})^{-\frac{2m}{\alpha+1}} \leq |k| \leq C r^{-1} (\log r^{-1})^{\frac{2m}{1-\alpha}}} \E_{\mu^\kappa} |\hat \theta^\kappa(k)|^2
        \\&\qquad\qquad\qquad\qquad\qquad \leq C (\log r^{-1})^{\frac{4m}{1+\alpha}} r^{1-\alpha}  \sum_{k} \frac{|k|^{1-\alpha}}{(\log |k|+1)^{2m}}\E_{\mu^\kappa} |\hat \theta^\kappa(k)|^2
        \\&\qquad\qquad\qquad\qquad\qquad \leq C (\log r^{-1})^{\frac{4m}{1+\alpha}} r^{1-\alpha}  \E_{\mu^\kappa} \|\theta^\kappa\|_{\sigma(H)}^2 \\&\qquad\qquad\qquad\qquad\qquad \leq C (\log r^{-1})^{\frac{4m}{1+\alpha}} r^{1-\alpha} \|F\|_{L^2}^2,
    \end{align*}
    giving the claimed bound.
\end{proof}

\subsection{Obukhov--Corrsin lower bounds for white-in-time models}
\label{s:oc bounds white}

\begin{definition}
    We define the covariance matrix $D : \T^d \to \R^{d \times d}$ of $du_t$ by
    \[\E du^i_t(x) du_s^j(y) =: \delta(t-s) D_{ij}(x-y).\]
\end{definition}

\begin{lemma}
    We have the representation
    \[D(x) = \sum_k |w_k|^2 \cos(2\pi k \cdot x) \sum_{j=1}^{d-1} \mathrm{e}_{k,j} \otimes \mathrm{e}_{k,j}.\]
    We then have that there exist $C(d)>0$ such that for all $\alpha \in (0,1)$,
    \begin{equation}
    \label{eq:D estimate}
    |D(0) - D(x)| \leq C |x|^{2\alpha} \sum_k |k|^{2\alpha} |w_k|^2.
    \end{equation}
\end{lemma}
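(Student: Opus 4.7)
The statement is a straightforward pair of computations: an explicit covariance calculation followed by a standard modulus-of-continuity bound on the Fourier series. I would carry them out as follows.

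For the representation, the plan is to compute $\E\, du^i_t(x) du^j_s(y)$ directly from the definition~\eqref{eq:dut def}. Expanding both factors, one obtains a double sum
\[\E\, du^i_t(x)\, du^j_s(y) = \sum_{k,\ell} w_k w_\ell\, e^{2\pi i (k\cdot x + \ell \cdot y)} \sum_{n,m} \mathrm{e}_{k,n}^i \mathrm{e}_{\ell,m}^j\, \E[dW_t^{k,n}\, dW_s^{\ell,m}].\]
Under the stated conventions ($W^{-k,j} = \overline{W^{k,j}}$, and $W^{k,j}$ independent of $W^{\ell,m}$ unless $k = \pm \ell$ and $m=j$), the only surviving pairs are $\ell = -k$, $m=n$, giving a factor $\delta(t-s)\, dt\, ds$. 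Using $w_{-k}=w_k \in \R$ and the natural symmetry $\mathrm{e}_{-k,j} = \mathrm{e}_{k,j}$ (so that $k^\perp = (-k)^\perp$ inherits the same basis), the surviving terms reduce to
\[\sum_k |w_k|^2\, e^{2\pi i k\cdot (x-y)} \sum_{n=1}^{d-1} \mathrm{e}_{k,n}^i \mathrm{e}_{k,n}^j.\]
Pairing $k$ with $-k$ converts the complex exponential into $\cos(2\pi k \cdot (x-y))$, matching the claimed formula after dividing out $\delta(t-s)$.

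For the estimate~\eqref{eq:D estimate}, I would subtract the formula at $x=0$ from the formula at $x$ to obtain
\[D(0) - D(x) = \sum_k |w_k|^2 \bigl(1 - \cos(2\pi k \cdot x)\bigr) \sum_{j=1}^{d-1} \mathrm{e}_{k,j} \otimes \mathrm{e}_{k,j}.\]
The matrix $\sum_{j} \mathrm{e}_{k,j} \otimes \mathrm{e}_{k,j}$ is the orthogonal projection $\Pi_{k^\perp}$, hence has operator norm at most $1$. The scalar factor is controlled by the elementary inequality
\[|1 - \cos(2\pi k\cdot x)| \le C \min\bigl(1, |k|^2 |x|^2\bigr) \le C |k|^{2\alpha} |x|^{2\alpha},\]
valid for all $\alpha \in (0,1)$ (check the two cases $|k||x|\le 1$ and $|k||x|>1$ separately; in the first case use $2 \ge 2\alpha$, in the second $1 \le (|k||x|)^{2\alpha}$). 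Summing termwise yields $|D(0)-D(x)| \le C|x|^{2\alpha} \sum_k |k|^{2\alpha} |w_k|^2$, which is~\eqref{eq:D estimate}.

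There is no significant analytic obstacle here; the only point requiring minor care is the bookkeeping in the covariance computation, specifically handling the complex-valued Brownian motions with the reality constraint $W^{-k,j} = \overline{W^{k,j}}$ so that the Itô brackets $[dW^{k,n}, dW^{\ell,m}]$ are correctly assembled and the two independent sums over $\pm k$ combine into the real part $\cos(2\pi k\cdot (x-y))$. Everything else is a direct estimate.
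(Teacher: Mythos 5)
Your proposal is correct and follows essentially the same route as the paper: the representation is obtained by the same direct covariance computation (the paper merely notes that the imaginary part cancels since the coefficients are real and even in $k$), and your bound $1-\cos(2\pi k\cdot x) \le C\min(1,|k|^2|x|^2) \le C|k|^{2\alpha}|x|^{2\alpha}$ is exactly the paper's chain of inequalities, just stated with the $\min$ split into cases rather than via the intermediate $\max_k \min(|k|^{2(1-\alpha)}|x|^2, |k|^{-2\alpha})$ step.
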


\begin{proof}
    The representation is a direct computation, using that the imaginary part cancels as the coefficients are purely real and even in $k$. Then we compute
    \begin{align*}
    |D(0) - D(x)| &\leq C\sum_k |w_k|^2 (1-\cos(2\pi k \cdot x))
    \\&\leq C\sum_k |w_k|^2 \min(|k|^2 |x|^2,1)
    \\&\leq C\max_k  \min(|k|^{2(1-\alpha)} |x|^2,|k|^{-2\alpha})  \sum_k |k|^{2\alpha} |w_k|^2
    \\&\leq C |x|^{2\alpha} \sum_k |k|^{2\alpha} |w_k|^2,
\end{align*}
as claimed.
\end{proof}

The following is a direct computation using stochastic calculus. See, e.g., \cite[Section 2.2]{rowan_anomalous_2024} for the derivation.
\begin{proposition}
\label{prop:g info}
    Let $\phi^\kappa_t$ solve~\eqref{eq:kraichnan-free}. Define for $r \in \T^d$
    \begin{equation}
    \label{eq:g def}
    g^\kappa_t(x) := \int \E \phi^\kappa_t(y) \phi^\kappa_t(y+x)\,dy.
    \end{equation}
    Then
    \begin{equation}
    \label{eq:g equation}
    \dot g^\kappa_t= 2 \kappa \Delta g + (D(0) - D(x)) : \nabla^2 g^\kappa_t.
    \end{equation}
\end{proposition}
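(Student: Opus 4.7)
The plan is to derive the equation for $g^\kappa_t$ by a direct It\^o calculation, proceeding in three clean steps.

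First, I would convert the Stratonovich equation~\eqref{eq:kraichnan-free} into It\^o form. Expanding $\circ du_t \cdot \nabla \phi_t^\kappa$ via the cross-variation with $\nabla \phi_t^\kappa$, and using that $[du_i(x), du_j(x)] = D_{ij}(0)\,dt$ together with $d \nabla \phi^\kappa_t = -\nabla(du_t \cdot \nabla \phi^\kappa_t) + (\text{bounded variation})$, the correction picks up $-\tfrac{1}{2} D_{ij}(0)\partial_i\partial_j \phi^\kappa_t\,dt$ on the left hand side. Thus $\phi^\kappa_t$ satisfies the It\^o SDE
\[
 d\phi^\kappa_t = \Bigl(\kappa \Delta \phi^\kappa_t + \tfrac{1}{2} D(0):\nabla^2 \phi^\kappa_t\Bigr) dt - du_t \cdot \nabla \phi^\kappa_t.
\]

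Second, I would apply the It\^o product rule to the real-valued process $\phi^\kappa_t(y)\phi^\kappa_t(y+x)$ for fixed $x,y \in \T^d$. The drift contributes $\phi_t(y)(\kappa\Delta + \tfrac12 D(0):\nabla^2)\phi_t(y+x) + \phi_t(y+x)(\kappa\Delta + \tfrac12 D(0):\nabla^2)\phi_t(y)$. The quadratic variation term is computed from
\[
 d\bigl[\phi_t(y), \phi_t(y+x)\bigr] = \partial_i\phi_t(y)\,\partial_j\phi_t(y+x)\,\bigl[du_t^i(y), du_t^j(y+x)\bigr] = D_{ij}(x)\,\partial_i\phi_t(y)\,\partial_j\phi_t(y+x)\,dt,
\]
using evenness of $D$. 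Taking expectation kills the $du_t$ martingale terms.

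Third, I would integrate over $y \in \T^d$, moving spatial derivatives in $y$ into derivatives in $x$ via integration by parts on the torus. Because $\nabla^2_x g^\kappa_t(x) = \int \phi_t(y)\nabla^2 \phi_t(y+x)\,dy$, the two ``$D(0)$'' pieces combine to $D(0):\nabla^2 g^\kappa_t$, and similarly the two Laplacians combine to $2\kappa \Delta g^\kappa_t$. For the cross-variation piece, an integration by parts gives
\[
 \int \partial_i \phi_t(y)\,\partial_j \phi_t(y+x)\,dy = -\int \phi_t(y)\,\partial_i\partial_j \phi_t(y+x)\,dy = -\partial_i\partial_j g^\kappa_t(x),
\]
so after multiplying by $D_{ij}(x)$ the cross-variation contributes $-D(x):\nabla^2 g^\kappa_t(x)$. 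Adding everything yields~\eqref{eq:g equation}.

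The only real obstacle is bookkeeping: the Stratonovich-to-It\^o correction produces a $+\tfrac12 D(0):\nabla^2 \phi^\kappa_t$ drift, and I must pair it correctly against the $-D(x):\nabla^2 g^\kappa_t$ coming from the It\^o cross-variation so that the $D(0)$ terms combine (rather than cancel) to produce the clean structure $(D(0) - D(x)):\nabla^2 g^\kappa_t$. Once the signs and factors of $2$ are correctly tracked, the identity follows immediately. The absolute convergence and differentiability in $t$ of $g^\kappa_t$ are standard for $\kappa > 0$ and $\phi^\kappa_0 \in L^2(\T^d)$, so no additional regularity argument is needed.
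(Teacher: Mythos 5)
Your proposal is correct and is essentially the argument the paper has in mind: the paper gives no proof of this proposition at all, deferring the ``direct computation using stochastic calculus'' to the citation \cite[Section 2.2]{rowan_anomalous_2024}, and your It\^o-form conversion, product rule, and integration by parts in $y$ reproduce exactly that computation with the correct signs and the correct combination $(D(0)-D(x)):\nabla^2 g^\kappa_t$. The only detail you leave implicit is that the first-order term in the Stratonovich--It\^o correction, coming from the cross-variation of $du^i_t$ with $(\partial_i du^j_t)\partial_j\phi^\kappa_t$, vanishes because $\sum_i \partial_i D_{ij} \equiv 0$ (incompressibility, equivalently evenness of $D$), which is what reduces the correction to the clean drift $\tfrac12 D(0):\nabla^2\phi^\kappa_t$.
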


\begin{proposition}
    Suppose that the $w_k$ satisfy~\eqref{eq:dut-regularity}, then there exists $C(d)>0$ such that for all $r \in (0,1),$ we have the bound
    \begin{equation}
    \label{eq:CET kraichnan}
    - \frac{d}{dt}\eta_r * g^\kappa_t(0)  \leq C (\kappa + r^{2\alpha}) r^{-2} \frac{1}{|B_r|}\int_{B_r} \E \|\phi^\kappa_t(x) - \phi^\kappa(x-y)\|_{L^2_x}^2\,dy.
\end{equation}
    Thus in particular, for all $r \in (\kappa^{\frac{1}{2\alpha}},1)$, 
    \begin{equation}
        \label{eq:CET kraichnan fourier}
        - \frac{d}{dt}\eta_r * g^\kappa_t(0)  \leq C r^{-2(1-\alpha)} \sum_{k \in \Z^d \backslash\{0\}} \min(|k|^2 r^2, 1) |\hat \phi^\kappa_t(k)|^2.
    \end{equation}
\end{proposition}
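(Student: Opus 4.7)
The plan is to mimic the commutator argument of Proposition~\ref{prop:CET} but applied to the deterministic PDE~\eqref{eq:g equation} satisfied by the correlator $g^\kappa_t$, exploiting the fact that the ``diffusion matrix'' $A(x) := D(0) - D(x)$ vanishes at the origin at the rate $|x|^{2\alpha}$.

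First, using that $g^\kappa_t$ is even (immediate from~\eqref{eq:g def}), I can write
\[
\frac{d}{dt}\eta_r * g^\kappa_t(0) = \int \eta_r(x)\big[2\kappa\,\Delta g^\kappa_t(x) + A(x):\nabla^2 g^\kappa_t(x)\big]\,dx,
\]
and integrate by parts twice to move both derivatives off $g^\kappa_t$. The Laplacian term is routine. For the $A:\nabla^2$ term, the product rule produces corrections of the form $(\partial_j \eta_r)(\partial_i A_{ij})$ and $\eta_r\,\partial_i\partial_j A_{ij}$; each of these sums vanishes because $\sum_i \partial_i A_{ij} = 0$, a consequence of the divergence-free condition $\nabla \cdot du_t = 0$ (which one reads off the explicit formula for $D$ using $k \cdot \mathrm{e}_{k,j} = 0$). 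The surviving identity is $\frac{d}{dt}\eta_r * g^\kappa_t(0) = \int [2\kappa\,\Delta \eta_r + A:\nabla^2\eta_r]\,g^\kappa_t\,dx$, and since $\int \Delta \eta_r = 0$ and (by the same calculation) $\int A:\nabla^2 \eta_r = 0$, I may subtract $g^\kappa_t(0)$ from $g^\kappa_t(x)$ inside the integrand without changing its value.

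The key identity $g^\kappa_t(0) - g^\kappa_t(x) = \tfrac{1}{2}\,\E\|\phi^\kappa_t(\cdot + x) - \phi^\kappa_t\|_{L^2_y}^2 \geq 0$ (obtained by expanding~\eqref{eq:g def}), combined with the pointwise bounds $\|\nabla^2\eta_r\|_{L^\infty} \leq Cr^{-2-d}$, $\supp \eta_r \subseteq B_r$, and the H\"older estimate $|A(x)| \leq C|x|^{2\alpha} \leq Cr^{2\alpha}$ for $|x|\leq r$ supplied by~\eqref{eq:D estimate} together with~\eqref{eq:dut-regularity}, yields~\eqref{eq:CET kraichnan} after taking absolute values. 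For~\eqref{eq:CET kraichnan fourier}, the restriction $r \geq \kappa^{1/(2\alpha)}$ makes $\kappa \leq r^{2\alpha}$, so the prefactor becomes $\leq C r^{-2(1-\alpha)}$; expanding the $L^2$-increment in Fourier as $\sum_k |1 - e^{2\pi i k \cdot x}|^2 \E|\hat\phi^\kappa_t(k)|^2$ and using $\frac{1}{|B_r|}\int_{B_r} |1 - e^{2\pi i k \cdot x}|^2\,dx \leq C\min(|k|^2 r^2, 1)$ (small $|k|r$ by Taylor expansion, large $|k|r$ trivially by $\leq 4$) completes the proof. The only genuine bookkeeping step is the divergence-free cancellation of the cross terms in the integration by parts; this is what lets the merely-$C^{2\alpha}$ coefficient $A$ be handled without incurring $\|\nabla A\|$ factors, and is the analogue here of the use of $\nabla \cdot v_t = 0$ in the commutator estimate~\eqref{eq:CET}.
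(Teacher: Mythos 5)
Your proof is correct and follows essentially the same route as the paper: both rest on the equation~\eqref{eq:g equation} for $g^\kappa_t$, use the divergence-free structure $\sum_i \partial_i D_{ij} = \sum_j \partial_j D_{ij} = 0$ to move both derivatives onto $\eta_r$ without losing derivatives on the H\"older coefficient $D(0)-D(x)$, convert to squared increments, and finish with~\eqref{eq:D estimate} and the same Fourier bound $\frac{1}{|B_r|}\int_{B_r}|1-e^{2\pi i k\cdot y}|^2\,dy \leq C\min(|k|^2r^2,1)$. The only difference is cosmetic: the paper unwinds $g^\kappa_t$ into the double integral $\E\int \phi^\kappa_t(y)\phi^\kappa_t(y-x)\,dy$ and symmetrizes at the level of $\phi^\kappa_t$, whereas you stay at the level of $g^\kappa_t$, subtract the constant $g^\kappa_t(0)$ using $\int (2\kappa\Delta\eta_r + A:\nabla^2\eta_r)\,dx = 0$, and invoke the identity $g^\kappa_t(0)-g^\kappa_t(x) = \tfrac12\E\|\phi^\kappa_t(\cdot+x)-\phi^\kappa_t\|_{L^2}^2$ --- the same manipulation in different packaging.
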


\begin{proof}
    By~\eqref{eq:g equation} and~\eqref{eq:g def},
    \begin{align}
        - \frac{d}{dt}\eta_r * g^\kappa_t(0) &=-\E \int \eta_r(x)\phi^\kappa_t(y)\big(( 2\kappa \Delta + (D(0) - D(x)): \nabla^2) \phi^\kappa_t\big)(y-x)\,dxdy
        \notag\\&= - \E \int (2\kappa I + (D(0) - D(x)):\nabla^2\eta_r(x)\phi^\kappa_t(y) \phi^\kappa_t(y-x)\,dxdy,
        \notag
    \end{align}
    where in order to move the derivatives in $x$, we use that $\sum_{i=1}^d \partial_i D_{ij} =  \sum_{j=1}^d \partial_j D_{ij} = 0$ by construction (this is a version of the fact that $\nabla \cdot du_t =0$ by construction). We then note that 
    \begin{align*}&- \E \int (2\kappa I + (D(0) - D(x)):\nabla^2\eta_r(x)\phi^\kappa_t(y) \phi^\kappa_t(y-x)\,dxdy 
    \\&\qquad= \frac{1}{2}\E \int (2\kappa I + (D(0) - D(x)):\nabla^2\eta_r(x)(\phi^\kappa_t(y)- \phi^\kappa_t(y-x))^2\,dxdy,
    \end{align*}
    since the $\phi^\kappa_t(y)^2$ term is killed by integrating by parts the $\nabla_x$ on $\eta_r$ and then $\phi^\kappa_t(y-x)^2$ term is equal to $\phi^\kappa_t(y)^2$ term by changing variables in $y$. Combining the two displays above, using the definition of $\eta_r$,~\eqref{eq:D estimate}, and~\eqref{eq:C alpha condition}, we get~\eqref{eq:CET kraichnan}. \eqref{eq:CET kraichnan fourier} then follows as in Proposition~\ref{prop:CET}.
\end{proof}

\begin{proof}[Proof of Theorem~\ref{thm:white lower bound}]
    We note that for $x \in \T^d$, by the Cauchy-Schwarz inequality,
    \[g^\kappa_t(x) = \E\int  \phi^\kappa_t(y) \phi^\kappa_t(y+x)\,dy \leq \E \|\phi^\kappa_t\|_{L^2}^2.\]
    Then---both since $\kappa >0$ and due to the anomalous dissipation---we have that as $t \to \infty$, $\E \|\phi^\kappa_t\|_{L^2}^2 \to 0$. Thus for any $r \in (0,1)$, as $t \to \infty$, $\eta_r * g^\kappa_t(0) \to 0$. Also, by the definition of $g^\kappa$, we have that $g^\kappa_0 \in C^0(\T^d)$ since $\phi^\kappa_0 = F \in L^2(\T^d)$. Thus $\eta_r * g^\kappa_0(0) \to g^\kappa_0(0) = \|F\|_{L^2}^2$ as $r \to 0$, so there exists some $r_0 \in (0,1)$ such that for all $r \leq r_0$, $\eta_r * g^\kappa_0(0) \geq \frac{1}{2} \|F\|_{L^2}^2.$
    
    Combining this with~\eqref{eq:CET kraichnan fourier}, by integrating over $t \in [0,\infty),$ we have for any $r \in (\kappa^{\frac{1}{2\alpha}}, 1)$, 
    \begin{align}\tfrac{1}{2} \|F\|_{L^2}^2 &\leq C r^{-2(1-\alpha)} \sum_{k \in \Z^d \backslash\{0\}} \min(|k|^2 r^2, 1) \int_0^\infty |\hat \phi^\kappa_t(k)|^2\,dt 
    \notag\\&=  C r^{-2(1-\alpha)} \sum_{k \in \Z^d \backslash\{0\}} \min(|k|^2 r^2, 1) \E_{\mu^\kappa} |\hat \theta^\kappa_t(k)|^2,
    \label{eq:lower bound integrated kraichnan}
    \end{align}
    where for the equality we use the representation from Proposition~\ref{prop:invariant measures} for $\mu^\kappa$ the unique invariant measure of~\eqref{eq:kraichnan-forced}. Then, for $1 \leq a < |k| < b < \infty$ to be chosen, we have that
    \begin{align}
    &\sum_{k} \min(|k|^2 r^2, 1) \E_{\mu^\kappa} |\hat \theta^\kappa_t(k)|^2 
    \notag\\&\qquad\leq  \sum_{a\leq  |k| \leq b} \E_{\mu^\kappa} |\hat \theta^\kappa_t(k)|^2  + 
   r^2 \sum_{|k| < a} \E_{\mu^\kappa} |k|^2 |\hat \theta^\kappa_t(k)|^2  + \sum_{|k| > b} \E_{\mu^\kappa} |\hat \theta^\kappa_t(k)|^2
   \notag\\& \qquad\leq  \sum_{a\leq |k| \leq b} \E_{\mu^\kappa} |\hat \theta^\kappa_t(k)|^2  + 
   r^2 a^{2(1-\beta)} (\log a + 1)^{2m} \sum_{|k| < a} \E_{\mu^\kappa} \frac{|k|^{2\beta}}{(\log |k| + 1)^{2m}} |\hat \theta^\kappa_t(k)|^2  
  \notag \\&\qquad \qquad\qquad+ C\frac{(\log b + 1)^{2m}}{b^{2\beta}} \sum_{|k| > b}  \frac{|k|^{2\beta}}{(\log |k| + 1)^{2m}} \E_{\mu^\kappa} |\hat \theta^\kappa_t(k)|^2
  \notag  \\&\qquad\leq \sum_{a\leq |k| \leq b} \E_{\mu^\kappa} |\hat \theta^\kappa_t(k)|^2 + C \Big( r^2 a^{2(1-\beta)} (\log a + 1)^{2m} + \frac{(\log b + 1)^{2m}}{b^{2\beta}}\Big) \|F\|_{L^2},
    \label{eq:high low middle split kraichnan}
    \end{align}
    where we use~\eqref{eq:sigma lower bound kraichnan}, the assumption of anomalous regularization up to $\sigma(H)$, and Proposition~\ref{prop:OC upper bounds} for the final inequality.

    Choosing
    \[a = C^{-1} \frac{r^{-\frac{\alpha}{1-\beta}}}{(\log r^{-1})^{\frac{m}{1-\beta}}} \quad \text{and} \quad b= C r^{-\frac{1-\alpha}{\beta}} (\log r^{-1})^{\frac{m}{\beta}},\]
    for $C$ sufficiently large, we have that 
    \[ C r^{-2(1-\alpha)}\Big(r^2 a^{2(1-\beta)} (\log a +1)^{2m}  + \frac{(\log |b| + 1)^{2m}}{b^{2\beta}}\Big)\|F\|_{L^2} \leq \frac{1}{4} \|F\|_{L^2}^2,\]
    so we can combine~\eqref{eq:lower bound integrated kraichnan} and~\eqref{eq:high low middle split kraichnan} to give~\eqref{eq:lower bound white general}. The first inequality of ~\eqref{eq:lower bound white particular} then follows directly. For the second inequality of~\eqref{eq:lower bound white particular}, we compute using Proposition~\ref{prop:OC upper bounds} and~\eqref{eq:sigma lower bound kraichnan}:
    \begin{align*}
        &\sum_{ C^{-1} r^{-1}(\log r^{-1})^{-\frac{m}{\alpha}}  \leq |k| \leq C r^{-1} (\log r^{-1})^{\frac{m}{1-\alpha}}} \E_{\mu^\kappa} |\hat \theta^\kappa(k)|^2 
        \\&\qquad\qquad\qquad\qquad\qquad\leq  C (\log r^{-1})^{\frac{2m}{\alpha}} r^{2(1-\alpha)}  \E_{\mu^\kappa} \|\theta^\kappa\|_{\sigma(H)}^2
        \\&\qquad\qquad\qquad\qquad\qquad\leq  C (\log r^{-1})^{\frac{2m}{\alpha}} r^{2(1-\alpha)} \|F\|_{L^2}^2,
    \end{align*}
    giving the claimed bound.
\end{proof}

\section{Weighted lattice Poincar\'e inequalities}
\label{s:lattice}

In this section, we fix $\alpha \in (0,1)$ and coefficients $(w_k)_{k \in \Z^d \backslash \{0\}}$ satisfying Assumption~\ref{asmp:wk}. We note that
\begin{equation}
    \label{eq:S bar bound}
    S(r) \leq r^{1-\alpha} \sum_{j} |j|^{2\alpha} w_j^2 \leq r^{1-\alpha}.
\end{equation}

The goal of this section is to prove Proposition~\ref{prop:ell p inequality clean}, which will be a direct corollary of Lemma~\ref{lem:ell p type inequality with explicit constants}. We keep explicit track of constants since we will need to choose some constants sufficiently large compared to other constants, and the validity of this argument is made clearest by keeping constants explicit.

Our first step will be proving the following inequality of $\ell^1$-type. While we ultimately want the inequality of Proposition~\ref{prop:ell p inequality clean}, which is an $\ell^p$-type inequality, we will see this $\ell^1$-type inequality will imply the $\ell^p$-type inequality. We can view this $\ell^1$-type inequality as a weighted Poincar\'e inequality on the lattice, since the right hand side involves only differences $|a_{k+j} - a_k|$ and there is weights on both sides (note however these weights are not matching: the weight on the left is generically much larger than the weight on the right, giving us a substantial ``gain'' in the weighting). The idea behind the argument is first to break up into geometric annuli, which is helpful since on a geometric annulus the weights are pointwise comparable up to a uniform constant. Then we want to use the ``fundamental theorem of calculus'' to write $a_k$ in terms of differences $a_{k+mj+j} - a_{k+mj}$ and an endpoint $a_{k+nj}$. We will then utilize that $w_j>0$ for infinitely many $j$, so on larger annuli, there are more and more differencing directions $j$ we can take advantage of. This growth of differencing directions is what is ultimately responsible for the gain in the weight in the inequality. 

\begin{lemma}
\label{lem:ell 1 type inequality}
	Fix $\alpha \in (0,1)$ and $(w_k)_{k \in \Z^d \backslash \{0\}}$ satisfying Assumption~\ref{asmp:wk} for $\alpha$. Then,  for all $R \geq r_0$, $(a_k)_{k \in \Z^d \backslash \{0\}}$ such that $a_k \geq 0 $ and 
        \begin{equation}
        \label{eq:regularity assumption}
        \sum_{k \in \Z^d \backslash\{0\}} |k|^{2(1-\alpha)} a_k <\infty,
    \end{equation}
    for $S$ defined by~\eqref{eq:S-def}, we have the bound
    \begin{align*}&\sum_{k \in \Z^d \backslash \{0\}} S(R |k|)^2 a_k 
    \\&\qquad\leq 192 R^2 \delta^{-2}\sum_{k \in \Z^d \backslash\{0\}}\sum_{\substack{j \in \Z^d \backslash\{0\}\\ |k+j| \geq (24 R \delta^{-1})^{-2} |k| }} |j|^{\alpha} w_j^2  S(24 \delta^{-1} R^2|k|) |\Pi_{j^\perp} k||a_{k+j} - a_k|.
    \end{align*}
\end{lemma}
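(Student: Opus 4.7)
The plan is to use the nondegeneracy hypothesis \eqref{eq:nondegenerate} to express $S(R|k|)\,a_k$ as a weighted average of $a_k$ over many differencing directions $j$, apply a ``fundamental theorem of calculus'' telescope in each direction $j$ so that $a_k$ becomes a sum of nearest-neighbor differences along a path together with an endpoint, and then reindex the telescope to produce the right-hand side of the lemma. The endpoints will be absorbed back into the left-hand side by choosing the telescope length so that the endpoint lies well beyond $|k|$.

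I will start by fixing $k$ with $R|k|\geq r_0$ and applying \eqref{eq:nondegenerate} with the unit vector $v = k/|k|$, which yields
\[
\delta\,|k|\,S(R|k|) \;\leq\; \sum_{0<|j|\leq R|k|} w_j^2\,|j|^{1+\alpha}\,|\Pi_{j^\perp}k|.
\]
Multiplying by $S(R|k|)\,a_k/|k|$ and summing in $k$, and then substituting, for each pair $(k,j)$, the inequality
\[
a_k \;\leq\; |a_{k+Nj}| \;+\; \sum_{m=0}^{N-1}|a_{k+(m+1)j}-a_{k+mj}|,
\]
where $N = N(k,j)\in\N$ is chosen to be the smallest nonnegative integer for which $|k+Nj|$ exceeds a threshold of order $R^2\delta^{-1}|k|$, produces a telescope contribution plus an endpoint contribution. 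By the choice of $N$, along the path $\{k+mj:0\leq m\leq N\}$ every intermediate norm $|k+mj|$ stays comparable to $|k|$ up to constants depending on $R$ and $\delta$; in particular consecutive norms satisfy $|k+(m+1)j|\geq (24R\delta^{-1})^{-2}|k+mj|$, and pairs $(k,j)$ for which the very first step fails this bound must be excluded from the start, matching exactly the support restriction $|k+j|\geq (24R\delta^{-1})^{-2}|k|$ appearing on the right-hand side of the lemma.

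For the telescope contribution I will change variables $\ell := k+mj$, so $k = \ell-mj$, and use the key algebraic identity $|\Pi_{j^\perp}(\ell-mj)| = |\Pi_{j^\perp}\ell|$ to preserve the projection factor. On the telescoping path $|\ell|\asymp|k|$, so by the geometric fluctuation bound \eqref{eq:geometric fluctuation bound} we have $S(R|k|)\lesssim S(24\delta^{-1}R^2|\ell|)$. For each fixed $(\ell,j)$ the inner sum over $m$ has $O(|\ell|/|j|)$ terms of comparable size, producing a factor $|\ell|/|j|$ that converts $|j|^{1+\alpha}/|\ell|$ into the desired $|j|^\alpha$; this is precisely where the weight-gain of the lemma originates, since the availability of $\sim|\ell|/|j|$ differencing lattice points along the path is what lets us trade one factor of $|j|$ for $S$.

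The main obstacle is the endpoint contribution $|a_{k+Nj}|$. Since $|k+Nj|\gtrsim R^2\delta^{-1}|k|$, a change of variables $\ell = k+Nj$ recasts the endpoint sum as $\sum_\ell(\cdots)\,a_\ell$ with $|k|\lesssim (R^2\delta^{-1})^{-1}|\ell|$; using \eqref{eq:geometric fluctuation bound} to compare $S(R|k|)$ to $S(R|\ell|)$, the entire endpoint sum is dominated by a small multiple of $\sum_\ell S(R|\ell|)^2\,a_\ell$, with the multiplicative constant tending to zero as the threshold in the definition of $N$ is enlarged. Choosing this threshold large enough to reduce the constant below, say, $\tfrac12$ allows us to absorb the endpoint term back into the left-hand side; this absorption is what forces the explicit constants $24$ and $192R^2\delta^{-2}$. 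The finitely many $k$ with $R|k| < r_0$ are treated directly, and \eqref{eq:regularity assumption} guarantees the absolute convergence needed to justify the rearrangements throughout.
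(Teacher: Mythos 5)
Your high-level plan (nondegeneracy at each $k$, telescope along direction $j$, reindex using $\Pi_{j^\perp}(k+mj)=\Pi_{j^\perp}k$, absorb endpoints) is the same skeleton as the paper's proof, but two of the paper's key devices are missing, and without them the argument breaks.

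First, your telescoping paths are not kept away from the origin. The claim that ``by the choice of $N$, every intermediate norm $|k+mj|$ stays comparable to $|k|$'' is false: if $j$ points roughly toward $-k$, the path $k, k+j, k+2j,\dots$ dips to norm $\approx |\Pi_{j^\perp}k|$ before climbing back up to the threshold, and your first-step exclusion $|k+j|\geq(24R\delta^{-1})^{-2}|k|$ does not rule this out (each individual step can satisfy the consecutive-ratio bound while the path still descends far below $|k|$). This is fatal when you reindex the difference terms: a fixed edge $(\ell,\ell+j)$ near the origin lies on the telescope of \emph{every} $k=\ell-mj$, $m\geq 1$, on the incoming ray, and each such $k$ contributes weight $w_j^2|j|^{1+\alpha}\frac{|\Pi_{j^\perp}k|}{|k|}S(R|k|)$. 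Since $S(R|k|)\geq\delta (R|k|)^\beta$ by~\eqref{eq:S not too small}, the sum over $m$ of $\frac{S(R|k|)}{|k|}$ diverges, so the coefficient you attach to such an edge is infinite and in particular cannot be bounded by the finite coefficient $|j|^\alpha w_j^2 S(24\delta^{-1}R^2|\ell|)|\Pi_{j^\perp}\ell|$ in the lemma. You also cannot simply discard the offending pairs $(k,j)$, because the whole argument starts from the lower bound~\eqref{eq:nondegenerate}, which requires summing over \emph{all} $|j|\leq R|k|$. The paper resolves this with the $\pm j$ trick: since either $k\cdot j\geq 0$ or $k\cdot(-j)\geq 0$, one of the two directions satisfies $|k+m(\pm j)|\geq|k|$ for all $m\geq 0$, and replacing $j$ by $-j$ is free because $w_{-j}=w_j$ and $|\Pi_{j^\perp}k|$ is even in $j$.

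Second, your endpoint absorption has no valid source of smallness. You telescope to a \emph{single} endpoint $\ell=k+Nj$ and claim the reindexed endpoint sum is at most $\epsilon\sum_\ell S(R|\ell|)^2a_\ell$ with $\epsilon\to0$ as the threshold $T$ grows. Two problems: (a) the multiplicity of the map $(k,j)\mapsto\ell$ is never controlled, and it is unbounded (for lines nearly tangent to the sphere of radius $|\ell|/T$ it grows like $T^{-1}\sqrt{|\ell|/|j|}$, exactly in the regime where $|\Pi_{j^\perp}k|/|k|\approx 1$ gives no compensation); (b) more fundamentally, even with multiplicity one, your smallness requires $S(R|k|)\leq\epsilon S(R|\ell|)$ with $|\ell|\approx T|k|$, i.e., a definite \emph{lower} bound on the growth of $S$ across scale $T$. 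Assumption~\ref{asmp:wk} provides no such bound:~\eqref{eq:geometric fluctuation bound} only limits growth from above, and~\eqref{eq:S not too small} together with~\eqref{eq:S bar bound} still permits $w$ with lacunary radial support, for which $S$ is flat across any fixed scale ratio $T$ on arbitrarily long plateaus; there the endpoint coefficient is of size $\delta^{-1}S(R|\ell|)^2$ and cannot be reabsorbed. The paper's smallness is instead purely combinatorial: each $a_k$ is bounded by the \emph{average} of $a_\ell$ over all endpoints in the next geometric annulus $A_{n+1}$, so after reindexing the endpoint coefficient carries the ratio $|(\ell+\langle j\rangle)\cap A_n|/|(\ell+\langle j\rangle)\cap A_{n+1}|\leq 6R\lambda^{-1}$, which equals $\delta/4$ by the choice $\lambda=24R\delta^{-1}$; only monotonicity of $S$ is used, never its growth. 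Repairing your proof essentially forces you to reintroduce both devices (endpoint averaging over a wide annulus and the $\pm j$ trick), at which point it becomes the paper's proof.
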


\begin{proof}
    Let $\lambda = 24 \delta^{-1} R$.  Then for each $n \in \N$, let $A_n := \{k \in \Z^d : \lambda^{n-1} < |k| \leq \lambda^n\}.$
    
    We first note that for any $k, j \in \Z^d \backslash \{0\}$ and $b \geq 1$, we have that
    \[a_k = a_{k+b j} - \sum_{m=0}^{b-1} a_{k+m j + j} - a_{k+m j}.\]
    We now fix $k \in A_n$. Then, applying the triangle inequality and summing $b \in \N$ such that $k+b j \in A_{n+1}$, we see that
    \begin{align*} &|\{b \in \N : k+b j \in A_{n+1}\}| a_k \leq \sum_{b \geq 0, k+b j \in A_{n+1}} \Big(a_{k+b j} +  \sum_{m=0}^{b-1}| a_{k+m j + j} - a_{k+m j}|\Big)
    \\&\quad\leq \sum_{b \geq 0, k+b j \in A_{n+1}} a_{k+b j} +  |\{b \in \N : k+b j \in A_{n+1}\}|  \sum_{\substack{m \geq 0 \\|k + mj +j| \leq \lambda^{n+1}}} | a_{k+m j + j} - a_{k+m j}|.
    \end{align*}
    Dividing, we thus get
    \[a_k \leq \frac{1}{|\{b \in\N  : k+b j \in A_{n+1}\}|}\sum_{b \geq 0, k+b j \in A_{n+1}} a_{k+b j} + \sum_{\substack{m \geq 0 \\|k + mj +j| \leq \lambda^{n+1}}} | a_{k+m j + j} - a_{k+m j}|.\]
    We only want the differences $|a_{k+mj+j} - a_{k+mj}|$ to appear when $k+mj$ is large, that is $|k+mj| > \lambda^{n-1}$. For an arbitrary $j$, $k+mj$ may get close to $0$ for $m \geq0$, but in that case we can consider $-j$ instead. That is, we note that either for all $m \geq 0, |k+mj| \geq |k|$ or for all $m \geq 0, |k-mj| \geq |k|$. Thus for $\tilde j =j$ in the first case or $\tilde j = -j$ in the second, applying the above inequality for $\tilde j$ gives
    \begin{align*}a_k &\leq \frac{1}{|\{b \in \N : k+b\tilde j \in A_{n+1}\}|}\sum_{b \geq 0, k+b \tilde j \in A_{n+1}} a_{k+b \tilde j} + \sum_{\substack{m \geq 0\\  |k + m\tilde j +\tilde j| \leq \lambda^{n+1}}} | a_{k+m \tilde j + \tilde j} - a_{k+m \tilde j}|
    \\&\leq \frac{2}{|(k + \<j\>) \cap A_{n+1}|}\sum_{\ell \in (k + \<j\>) \cap A_{n+1}} a_{\ell} + \sum_{\substack{\ell, \ell+j \in (k + \<j\>) \cap (A_{n} \cup A_{n+1})}} |a_{\ell+j} - a_\ell|.
    \end{align*}
     We now sum this inequality over $k \in A_n, |j| \leq R \lambda^n$---using that since $\lambda \geq 8R$ and $R \geq 2$, for any such $k,j$ we have that $(k+ \<j\>) \cap A_{n+1} \ne \emptyset$. In the sum, we add the $n,k,j$ dependent weight $\nu^n_j \frac{|\Pi_{j^\perp} k|}{|k|}$---with $\nu^n_j \geq 0$ specified below---giving:
    \begin{align}&\sum_{|j| \leq R \lambda^n} \sum_{k \in A_n} \nu^n_j \frac{|\Pi_{j^\perp} k|}{|k|}  a_k 
    \notag\\&\quad\qquad\leq \sum_{k \in A_n} \sum_{|j| \leq R \lambda^n} \frac{2 \nu^n_j |\Pi_{j^\perp} k|}{|k| |(k + \<j\>) \cap A_{n+1}|}\sum_{\ell \in (k + \<j\>) \cap A_{n+1}} a_{\ell} 
    \notag\\&\quad\qquad\qquad+ \sum_{k \in A_n}\sum_{|j| \leq R \lambda^n} \nu^n_j \frac{|\Pi_{j^\perp} k|}{|k|}\sum_{\ell,\ell+j \in (k + \<j\>) \cap (A_{n} \cup A_{n+1})} |a_{\ell+j} - a_\ell|\notag.
\end{align}
 Using then that $\ell - k \in \<j\>$, so $\ell + \<j\> = k + \<j\>$ and $\Pi_{j^\perp} k = \Pi_{j^\perp} \ell$, we get that
\begin{align}
&\sum_{|j| \leq R \lambda^n} \sum_{k \in A_n} \nu^n_j \frac{|\Pi_{j^\perp} k|}{|k|}  a_k 
    \notag\\&\quad\qquad\leq 2  \sum_{|j| \leq R \lambda^n} \sum_{\ell \in A_{n+1}}\sum_{k \in (\ell + \<j\>) \cap A_n}\frac{ \nu^n_j}{ |(\ell + \<j\>) \cap A_{n+1}|} a_{\ell}  
    \notag\\&\quad\qquad\qquad+ \lambda^{1-n}\sum_{|j| \leq R \lambda^n} \sum_{\ell,\ell+j \in A_{n} \cup A_{n+1}} \sum_{k \in (\ell + \<j\>) \cap A_n} \nu^n_j |\Pi_{j^\perp} \ell|  |a_{\ell+j} - a_\ell|
    \notag\\&\quad\qquad=   2\sum_{j \leq R\lambda^n }\sum_{\ell \in A_{n+1}}   \nu^n_j \frac{ |(\ell + \<j\>) \cap A_{n}|}{|(\ell + \<j\>) \cap A_{n+1}|} a_{\ell} 
        \notag\\&\quad\qquad\qquad+ \lambda^{1-n} \sum_{|j| \leq R \lambda^n}\sum_{\ell,\ell+j \in A_{n} \cup A_{n+1}} \nu^n_j | (\ell + \<j\>) \cap A_n ||\Pi_{j^\perp} \ell||a_{\ell+j} - a_\ell|.
        \label{eq:main lattice sum bound}
\end{align}
 We now note that  $| (\ell + \<j\>) \cap A_m| \leq \frac{2\lambda^m}{|j|} + 1$ and for $\ell \in A_{n+1}$ such that $|(\ell +\<j\> \cap A_n| >0$, we have that
\[|(\ell + \<j\>) \cap A_{n+1}| \geq \frac{\lambda^{n+1} - \lambda^n}{|j|} -1 \geq \frac{\lambda^{n+1}}{2|j|} - 1\]
Then, for $\ell \in A_{n+1}, |j| \leq R\lambda^n$, we have that 
\begin{equation}
\label{eq:ratio bound}    
\frac{ |(\ell + \<j\>) \cap A_{n}|}{|(\ell + \<j\>) \cap A_{n+1}|}  \leq \frac{4 \lambda^n  +2|j|}{\lambda^{n+1} - 2|j|} \leq \frac{4 \lambda^n + 2 R \lambda^n}{\lambda^{n+1} - 2 R \lambda^n} \leq 6R\lambda^{-1},
\end{equation}
using that $R \geq 4$ and $\lambda \geq 4R$. We also have that for $|j| \leq  R \lambda^n$,
\begin{equation}
    \label{eq:set size bound}
    | (\ell + \<j\>) \cap A_n | \leq \frac{2\lambda^n}{|j|} + 1 \leq 2R \frac{\lambda^n}{|j|}.
\end{equation}

Combining~\eqref{eq:main lattice sum bound},~\eqref{eq:ratio bound}, and~\eqref{eq:set size bound} and summing over $n$, we have that
\begin{align}
    \sum_{n=1}^\infty \sum_{|j| \leq R \lambda^n} \sum_{k \in A_n} \nu^n_j \frac{|\Pi_{j^\perp} k|}{|k|} a_k & \leq 12 R\lambda^{-1}\sum_{n=1}^\infty \sum_{|j| \leq R \lambda^n}\sum_{k \in A_{n+1}}  \nu^{n}_ja_{k} 
    \notag\\&\quad+ 2 R \lambda \sum_{n=1}^\infty \sum_{|j| \leq R \lambda^n}\sum_{k, k+j\in A_{n} \cup A_{n+1}}  \frac{\nu^n_j}{|j|} |\Pi_{j^\perp} k||a_{k+j} - a_k|.
     \label{eq:summed in n bound 1}
\end{align}
We now choose $\nu^n_j := |j|^{1+\alpha} w_j^2  S(R\lambda^n)$, giving that 
\begin{align}
   &\sum_{n=1}^\infty  \sum_{k \in A_n}  S(R\lambda^n)^2 a_k 
   \notag\\&\qquad\leq \delta^{-1}\sum_{n=1}^\infty  \sum_{k \in A_n} S(R\lambda^n)  a_k\sum_{|j| \leq R \lambda^n} |j|^{1+\alpha} w_j^2 \frac{|\Pi_{j^\perp} k|}{|k|} 
    \notag\\&\qquad \leq 12 \delta^{-1} R\lambda^{-1}\sum_{n=1}^\infty \sum_{|j| \leq R \lambda^n}\sum_{k \in A_{n+1}} |j|^{1+\alpha} w_j^2  S(R\lambda^n)a_{k} 
    \notag\\&\qquad\qquad+ 2 R \lambda \delta^{-1} \sum_{n=1}^\infty \sum_{|j| \leq R \lambda^n}\sum_{k, k+j\in A_{n} \cup A_{n+1}}  |j|^{\alpha} w_j^2  S(R\lambda^n) |\Pi_{j^\perp} k||a_{k+j} - a_k|,
    \label{eq:summed in n bound 2}
\end{align}
where we use the definition of $S$~\eqref{eq:S-def} as well as the assumption~\eqref{eq:nondegenerate}---which applies as $R\lambda^n \geq R \geq r_0$---for the first inequality and~\eqref{eq:summed in n bound 1} for the second inequality. Then again using the definition of $S$ together with $\lambda = 24 R \delta^{-1}$, we see that 
\begin{align} 12 \delta^{-1} R\lambda^{-1}\sum_{n=1}^\infty \sum_{|j| \leq R \lambda^n}\sum_{k \in A_{n+1}} |j|^{1+\alpha} w_j^2  S(R\lambda^n)a_{k} &\leq \tfrac{1}{2} \sum_{n=1}^\infty \sum_{k \in A_{n+1}}  S(R\lambda^n)^2a_{k} 
\notag\\&\leq \tfrac{1}{2} \sum_{n=1}^\infty \sum_{k \in A_n} S(R\lambda^n)^2 a_k.
\label{eq:term to reabsorb}
\end{align}
We also have from~\eqref{eq:regularity assumption} and~\eqref{eq:S bar bound},
\begin{align*} \sum_{n=1}^\infty \sum_{k \in A_n} S(R\lambda^n)^2 a_k &\leq (\lambda R)^{2(1-\alpha)} \sum_{n=1}^\infty \sum_{k \in A_n} (\lambda^{n-1})^{2(1-\alpha)} a_k \\&\leq (\lambda R)^{2(1-\alpha)} \sum_{n=1}^\infty \sum_{k \in A_n} |k|^{2(1-\alpha)} a_k < \infty.
\end{align*}
Together with~\eqref{eq:term to reabsorb}, this gives that we can reabsorb the first term on the right hand side of~\eqref{eq:summed in n bound 2} and use the definition of $\lambda$ to get
\begin{align}
   &\sum_{n=1}^\infty  \sum_{k \in A_n}  S(R\lambda^n)^2 a_k \notag\\&\qquad\leq 96 R^2 \delta^{-2}\sum_{n=1}^\infty \sum_{|j| \leq R \lambda^n}\sum_{k, k+j\in A_{n} \cup A_{n+1}}  |j|^{\alpha} w_j^2  S(R\lambda^n) |\Pi_{j^\perp} k||a_{k+j} - a_k|.
    \label{eq:summed in n bound 3}
\end{align}
Using that $S$ is increasing and that for $k \in A_n, |k| \leq \lambda^n,$ we have that 
\begin{equation}
    \label{eq:lhs simplification}
\sum_{k \in \Z^d \backslash \{0\}} S(R |k|)^2 a_k= \sum_{n=1}^\infty  \sum_{k \in A_n}  S(R|k|)^2 a_k  \leq \sum_{n=1}^\infty  \sum_{k \in A_n}  S(R\lambda^n)^2 a_k.
\end{equation}
Using that for $k, k+j \in A_n \cup A_{n+1}$, $\lambda^{n-1} \leq |k| \leq \lambda^{n+1}$ and $|k+j| \geq \lambda^{n-1}$, we also have that
\begin{align}
    &\sum_{n=1}^\infty \sum_{|j| \leq R \lambda^n}\sum_{k, k+j\in A_{n} \cup A_{n+1}}  |j|^{\alpha} w_j^2  S(R\lambda^n) |\Pi_{j^\perp} k||a_{k+j} - a_k|
   \notag\\ &\qquad\qquad\leq  2 \sum_{k \in \Z^d \backslash\{0\}}\sum_{\substack{j \in \Z^d \backslash\{0\}\\ |k+j| \geq \lambda^{-2} |k| }} |j|^{\alpha} w_j^2  S(R\lambda |k|) |\Pi_{j^\perp} k||a_{k+j} - a_k|.
   \label{eq:rhs simplification}
\end{align}
Finally, combining~\eqref{eq:summed in n bound 3},~\eqref{eq:lhs simplification},~\eqref{eq:rhs simplification}, and the definition of $\lambda$, we conclude.
\end{proof}

The following inequality is used in a similar setting in \cite[Proof of Lemma 2.6]{luo_elementary_2024}.
\begin{lemma}
    For $a,b \geq 0$ and $p>1$
    \begin{equation}
        \label{eq:bound on p difference}
    |a^p - b^p| \leq \frac{p}{\sqrt{p-1}} \sqrt{a^p + b^p} \sqrt{(a^{p-1}- b^{p-1})(a-b)}.
    \end{equation}
\end{lemma}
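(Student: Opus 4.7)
The plan is to reduce the inequality to a single application of the Cauchy--Schwarz inequality applied to an integral representation of each side. Assume without loss of generality that $a > b \geq 0$. By the fundamental theorem of calculus, I would write
\[
a^p - b^p = p \int_b^a t^{p-1}\,dt, \qquad a^{p-1} - b^{p-1} = (p-1) \int_b^a t^{p-2}\,dt, \qquad a - b = \int_b^a dt.
\]
These identities all make sense including at the endpoint $b = 0$ precisely because the assumption $p > 1$ makes $t^{p-2}$ integrable near the origin.

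The key trick is the splitting $t^{p-1} = t^{p/2} \cdot t^{(p-2)/2}$, to which I apply Cauchy--Schwarz:
\[
\Big(\int_b^a t^{p-1}\,dt\Big)^2 \leq \int_b^a t^p\,dt \cdot \int_b^a t^{p-2}\,dt.
\]
The second factor on the right equals $(a^{p-1} - b^{p-1})/(p-1)$ by the identity above. For the first factor, I would use monotonicity of $t \mapsto t^p$ on $[b,a]$ to bound
\[
\int_b^a t^p\,dt \leq a^p (a-b) \leq (a^p + b^p)(a-b).
\]

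Combining these two estimates with the identity $a^p - b^p = p\int_b^a t^{p-1}\,dt$, I would conclude
\[
(a^p - b^p)^2 \leq p^2 \cdot (a^p + b^p)(a-b) \cdot \frac{a^{p-1} - b^{p-1}}{p-1} = \frac{p^2}{p-1}(a^p + b^p)(a-b)(a^{p-1} - b^{p-1}),
\]
and taking square roots yields~\eqref{eq:bound on p difference} (noting that $(a-b)$ and $(a^{p-1} - b^{p-1})$ have the same sign, so the product under the square root is nonnegative).

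There is no substantive obstacle: the only minor point of care is verifying that the integral $\int_b^a t^{p-2}\,dt$ is finite when $b = 0$ and $p < 2$, which holds exactly because $p > 1$. The factor $\frac{p}{\sqrt{p-1}}$ in the statement matches precisely the constant that emerges from the identity $\int_b^a t^{p-2}\,dt = (a^{p-1} - b^{p-1})/(p-1)$ combined with the prefactor $p$ from $a^p - b^p = p\int_b^a t^{p-1}\,dt$.
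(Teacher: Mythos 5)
Your proof is correct, but it takes a genuinely different route from the paper's. The paper's argument is purely algebraic: it applies the tangent-line inequality for the two convex functions $x \mapsto x^p$ and $x \mapsto x^{p/(p-1)}$, obtaining (for $a \geq b$) the two upper bounds $a^p - b^p \leq p\, a^{p-1}(a-b)$ and $a^p - b^p \leq \tfrac{p}{p-1}\, a\, (a^{p-1}-b^{p-1})$, then multiplies them, bounds $a^p \leq a^p + b^p$, and takes a square root. You instead express all three differences as integrals via the fundamental theorem of calculus and apply Cauchy--Schwarz to the splitting $t^{p-1} = t^{p/2}\, t^{(p-2)/2}$, followed by the crude bound $\int_b^a t^p\,dt \leq a^p(a-b) \leq (a^p+b^p)(a-b)$. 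Both arguments yield exactly the same constant $\tfrac{p}{\sqrt{p-1}}$ and are of comparable length. The paper's version has the small advantage of requiring no integrability discussion (your route needs the observation, which you correctly supply, that $t^{p-2}$ is integrable near the origin when $1 < p < 2$ because $p > 1$), while your version is the classical mechanism behind Stroock--Varopoulos-type inequalities and adapts more readily to other bounds of the shape $|f(a)-f(b)|^2 \leq C\,(g(a)-g(b))(h(a)-h(b))$ for increasing functions $f,g,h$, by choosing a different Cauchy--Schwarz splitting of the integrand.
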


\begin{proof}
    Recall that for convex functions $f : [0,\infty) \to \R$, we have that for $x,y \in [0,\infty)$,
    \[f(y) \geq f(x) + f'(x) (y-x).\]
    Using that for $p>1$, $x^p$ and $x^{\frac{p}{p-1}}$ are convex, we have that
    \begin{align*}
           p b^{p-1} (a-b) &\leq a^p - b^p \leq p a^{p-1} (a-b) ,
           \\\frac{p}{p-1} b (a^{p-1} - b^{p-1})&\leq a^p- b^p \leq \frac{p}{p-1} a(a^{p-1} - b^{p-1}).
    \end{align*}
    Taking the product of the two inequality and taking a square root, we get the desired result.  
\end{proof}

\begin{lemma}
    \label{lem:ell p type inequality with explicit constants}
     Fix $\alpha \in (0,1)$ and $(w_k)_{k \in \Z^d \backslash \{0\}}$ satisfying Assumption~\ref{asmp:wk} for $\alpha$. Then, for all $R \geq r_0$, $(a_k)_{k \in \Z^d \backslash \{0\}}$ such that $a_k \geq 0 $ and 
        \begin{equation}
        \label{eq:regularity assumption ell p}
        \sum_{k \in \Z^d \backslash\{0\}} |k|^{2(1-\alpha)} a_k^p <\infty,
    \end{equation}
    for $S$ defined by~\eqref{eq:S-def}, we have the bound
    \begin{align*}
    &\sum_{k \in \Z^d \backslash \{0\}} S(R |k|)^2 a_k^p 
    \\&\qquad\leq  \frac{2^{17} R^2p^2 \Psi\big((24\delta^{-1} R)^3\big)^2}{\delta^2 (p-1)} \sum_{k,j \in \Z^d \backslash\{0\}} w_j^2  |\Pi_{j^\perp} k|^2(a_{k+j}^{p-1} - a_k^{p-1})(a_{k+j}- a_k).
    \end{align*}
\end{lemma}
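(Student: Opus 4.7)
The plan is to deduce the $\ell^p$ inequality from the $\ell^1$ inequality of Lemma~\ref{lem:ell 1 type inequality} applied to the sequence $(a_k^p)_{k \in \Z^d \backslash \{0\}}$. This application is legitimate because $a_k^p \geq 0$ and the hypothesis~\eqref{eq:regularity assumption ell p} supplies the required condition~\eqref{eq:regularity assumption} with $a_k^p$ in place of $a_k$. Doing so yields
\[
\sum_{k} S(R|k|)^2 a_k^p \leq 192\, R^2 \delta^{-2} \sum_{\substack{k,j \in \Z^d \backslash\{0\} \\ |k+j| \geq (24R\delta^{-1})^{-2}|k|}} |j|^\alpha w_j^2\, S(24\delta^{-1} R^2 |k|)\, |\Pi_{j^\perp} k|\, |a_{k+j}^p - a_k^p|.
\]

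The next step is to apply the pointwise estimate~\eqref{eq:bound on p difference} to bound each $|a_{k+j}^p - a_k^p|$ by $\tfrac{p}{\sqrt{p-1}}\sqrt{a_{k+j}^p + a_k^p}\sqrt{(a_{k+j}^{p-1} - a_k^{p-1})(a_{k+j} - a_k)}$, and then apply Cauchy--Schwarz in the $(k,j)$ sum, pairing $|j|^\alpha w_j S(24\delta^{-1}R^2|k|) \sqrt{a_{k+j}^p + a_k^p}$ against $w_j |\Pi_{j^\perp} k| \sqrt{(a_{k+j}^{p-1} - a_k^{p-1})(a_{k+j} - a_k)}$. One of the resulting factors is (the square root of) the target right-hand side $\sum_{k,j} w_j^2 |\Pi_{j^\perp} k|^2 (a_{k+j}^{p-1} - a_k^{p-1})(a_{k+j} - a_k)$; the other is (the square root of)
\[
T := \sum_{\substack{k,j \\ |k+j| \geq (24R\delta^{-1})^{-2}|k|}} |j|^{2\alpha} w_j^2\, S(24\delta^{-1}R^2|k|)^2\, (a_{k+j}^p + a_k^p).
\]

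The crucial task is then to absorb $T$ into the left-hand side of the target inequality. The $a_k^p$ contribution is bounded by using~\eqref{eq:C alpha condition} to sum $\sum_j |j|^{2\alpha} w_j^2 \leq 1$ and~\eqref{eq:geometric fluctuation bound} to pass $S(24\delta^{-1}R^2|k|) \leq \Psi(24\delta^{-1}R) S(R|k|)$, yielding at most $\Psi(24\delta^{-1}R)^2 \sum_k S(R|k|)^2 a_k^p$. For the $a_{k+j}^p$ contribution I change variables $\ell = k+j$; the restriction $|k+j| \geq (24R\delta^{-1})^{-2}|k|$ becomes $|\ell - j| \leq (24R\delta^{-1})^2 |\ell|$, so that by monotonicity of $S$ and another use of~\eqref{eq:geometric fluctuation bound}, $S(24\delta^{-1}R^2|\ell - j|) \leq S\bigl((24R\delta^{-1})^3 |\ell|\bigr) \leq \Psi\bigl((24R\delta^{-1})^3\bigr) S(R|\ell|)$. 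I expect this constrained change of variables to be the main obstacle: without the radial restriction supplied by Lemma~\ref{lem:ell 1 type inequality} and the fluctuation bound~\eqref{eq:geometric fluctuation bound}, the shifted weight $S(R|\ell - j|)$ could be arbitrarily larger than $S(R|\ell|)$ and reabsorption would fail.

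Combining the two contributions gives $T \leq 2\,\Psi\bigl((24R\delta^{-1})^3\bigr)^2 \sum_k S(R|k|)^2 a_k^p$. Substituting this back into the Cauchy--Schwarz bound, dividing both sides by $\bigl(\sum_k S(R|k|)^2 a_k^p\bigr)^{1/2}$ (handling the trivial case where this quantity vanishes separately), and squaring then produces the claimed inequality with explicit constant of the form $\frac{C\, R^2 p^2 \Psi((24\delta^{-1}R)^3)^2}{\delta^2 (p-1)}$, matching the stated bound up to the numerical prefactor.
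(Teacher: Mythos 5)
Your proposal is correct and follows essentially the same route as the paper's proof: apply Lemma~\ref{lem:ell 1 type inequality} to $(a_k^p)$, use~\eqref{eq:bound on p difference} and Cauchy--Schwarz with the same pairing, reabsorb the quadratic factor via monotonicity of $S$,~\eqref{eq:C alpha condition}, and~\eqref{eq:geometric fluctuation bound}, then divide and square (the paper justifies the division by noting $\sum_k S(R|k|)^2 a_k^p \leq R^{2(1-\alpha)}\sum_k |k|^{2(1-\alpha)}a_k^p < \infty$ from~\eqref{eq:S bar bound} and~\eqref{eq:regularity assumption ell p}, which you should state explicitly alongside your vanishing caveat). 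One tiny bookkeeping point: since $\Psi$ is not assumed monotone, for the $a_k^p$ contribution you should bound $S(24\delta^{-1}R^2|k|) \leq S\big((24\delta^{-1}R)^3 R|k|\big) \leq \Psi\big((24\delta^{-1}R)^3\big) S(R|k|)$ (monotonicity of $S$ first, as the paper does) rather than invoking $\Psi(24\delta^{-1}R)$, so that a single factor $\Psi\big((24\delta^{-1}R)^3\big)^2$ covers both terms.
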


\begin{proof}
    Applying Lemma~\ref{lem:ell 1 type inequality} to $a_k^p$, we have that
    \begin{align}&\sum_{k \in \Z^d \backslash \{0\}} S(R |k|)^2 a_k^p
    \notag\\&\qquad\leq 192 R^2 \delta^{-2}\sum_{k \in \Z^d \backslash\{0\}}\sum_{\substack{j \in \Z^d \backslash\{0\}
    \notag\\ |k+j| \geq (24 R \delta^{-1})^{-2} |k| }} |j|^{\alpha} w_j^2  S(24 \delta^{-1} R^2|k|) |\Pi_{j^\perp} k||a_{k+j}^p - a_k^p|
    \notag\\&\qquad\leq  \frac{192 R^2p}{\delta^2 \sqrt{p-1}} \sum_{k \in \Z^d \backslash\{0\}}\sum_{\substack{j \in \Z^d \backslash\{0\}\\ |k+j| \geq (24 R \delta^{-1})^{-2} |k| }} |j|^{\alpha} w_j^2  S(24 \delta^{-1} R^2|k|) |\Pi_{j^\perp} k|
    \notag\\&\qquad\qquad\qquad \times \sqrt{a_{k+j}^p + a_k^p}\sqrt{(a_{k+j}^{p-1} - a_k^{p-1})(a_{k+j}- a_k)}
    \notag\\&\qquad\leq  \frac{192 R^2p}{\delta^2 \sqrt{p-1}} \Big(\sum_{k \in \Z^d \backslash\{0\}}\sum_{\substack{j \in \Z^d \backslash\{0\}\\ |k+j| \geq (24 R \delta^{-1})^{-2} |k| }} |j|^{2\alpha} w_j^2 S(24 \delta^{-1} R^2|k|)^2 (a_{k+j}^p + a_k^p)\Big)^{1/2} 
    \notag\\&\qquad\qquad\qquad \times \Big(\sum_{k \in \Z^d \backslash\{0\}}\sum_{\substack{j \in \Z^d \backslash\{0\}\\ |k+j| \geq (24 R \delta^{-1})^{-2} |k| }} w_j^2  |\Pi_{j^\perp} k|^2(a_{k+j}^{p-1} - a_k^{p-1})(a_{k+j}- a_k)\Big)^{1/2},
    \label{eq:big cauchy schwarz display}
    \end{align}
    where we use~\eqref{eq:bound on p difference} for the second inequality and Cauchy-Schwarz for the third. We then use that $S$ is increasing and~\eqref{eq:C alpha condition} to bound
    \begin{align*}
        &\sum_{k \in \Z^d \backslash\{0\}}\sum_{\substack{j \in \Z^d \backslash\{0\}\\ |k+j| \geq (24 R \delta^{-1})^{-2} |k| }} |j|^{2\alpha} w_j^2 S(24 \delta^{-1} R^2|k|)^2 (a_{k+j}^p + a_k^p)
        \\&\qquad\leq  \sum_{k,j \in \Z^d \backslash\{0\}}|j|^{2\alpha} w_j^2  \Big(S\big((24 \delta^{-1} R)^3 R |k+j|\big)^2a_{k+j}^p + S(24 \delta^{-1} R^2|k|)a_k^p\Big)
        \\&\qquad\leq 2 \sum_{k} S\big((24 \delta^{-1} R)^3 R |k|\big)^2 a_k^p \sum_j  |j|^{2\alpha} w_j^2
        \\&\qquad\leq  2 \sum_{k} S\big((24 \delta^{-1} R)^3 R |k|\big)^2 a_k^p.
    \end{align*}
    We then use that $R \geq r_0$, allowing us apply~\eqref{eq:geometric fluctuation bound} to give that 
    \begin{align}
    \notag
    &\sum_{k \in \Z^d \backslash\{0\}}\sum_{\substack{j \in \Z^d \backslash\{0\}\\ |k+j| \geq (24 R \delta^{-1})^{-2} |k| }} |j|^{2\alpha} w_j^2 S(24 \delta^{-1} R^2|k|)^2 (a_{k+j}^p + a_k^p) 
    \\&\qquad\qquad\qquad\qquad\qquad\qquad\qquad\qquad\leq  2 \Psi((24 \delta^{-1} R)^3)^2\sum_{k} S(R |k|)^2 a_k^p.
    \label{eq:after geometric fluctuation bound}
    \end{align}
    We then note that from~\eqref{eq:S bar bound} and~\eqref{eq:regularity assumption ell p}, we have that 
    \[\sum_{k} S(R |k|)^2 a_k^p \leq R^{2(1-\alpha)} \sum_{k} |k|^{2(1-\alpha)} a_k^p <\infty.\]
    Thus, using~\eqref{eq:after geometric fluctuation bound}, we can factor out the first term on the right hand side of~\eqref{eq:big cauchy schwarz display} and take the square to give the result.
\end{proof}

{\small
\bibliographystyle{alpha}
\bibliography{keefer-references}
}

\end{document}